\documentclass[reqno, 12pt]{amsart}

\usepackage{_pascal/_pascal}

\addbibresource{bib.bib}

\hypersetup{
    pdftitle={},
    pdfauthor={Maria Chudnovsky, J.~Pascal Gollin, Matjaz Krnc, Martin Milanič}
}

\newcommand{\tw}[0]{\ensuremath{\mathsf{tw}}}

\newcommand{\wedgeselectionnull}[0]{\ensuremath{\Omega}}
\newcommand{\wedgeselection}[1]{\ensuremath{\Omega_{#1}}}
\newcommand{\segmentselectionnull}[0]{\ensuremath{\sigma}}
\newcommand{\segmentselection}[1]{\ensuremath{\segmentselectionnull(#1)}}
\newcommand{\barrier}[1]{\ensuremath{\beta}(#1)}

\usepackage[normalem]{ulem}

\begin{document}

\author[Chudnovsky]{Maria Chudnovsky$^\ast$}
\address[$\ast$]{Princeton University, Princeton, NJ, USA.
Supported by NSF Grant
 DMS-2348219, NSF Grant CCF-2505100, AFOSR grant FA9550-22-1-0083 and a Guggenheim Fellowship.}
\email{\tt mchudnov@math.princeton.edu}
\author[Gollin]{J.~Pascal Gollin$^\dagger$}
\address[$\dagger$]{FAMNIT, University of Primorska, Koper, Slovenia. Supported in part by the Slovenian Research and Innovation Agency (I0-0035, research programs P1-0285 and P1-0383, and research projects J1-3003, J1-4008, J1-4084, J1-60012, and N1-0370) and by the research program CogniCom (0013103) at the University of Primorska.}
\email{\tt pascal.gollin@famnit.upr.si}

\author[Krnc]{Matja\v{z} Krnc$^{\dagger,\ddagger}$}
\email{\tt matjaz.krnc@upr.si}

\author[Milani\v{c}]{Martin Milani\v{c}$^{\dagger,\ddagger}$}
\address[$\ddagger$]{IAM, University of Primorska, Koper, Slovenia.}
\email{\tt martin.milanic@upr.si}

\title[Dominated balanced separators in wheel-induced-minor-free graphs]{Dominated balanced separators\\ in wheel-induced-minor-free graphs}


\keywords{Induced minors, balanced separators}

\subjclass[2020]{05C83, 05C75, 05C40}

\begin{abstract}
    Gartland and Lokshtanov conjectured that every graph that excludes some planar graph as an induced minor has a \emph{balanced separator}, that is, a separator whose deletion leaves every component with no more than half of the vertices of the graph, which is dominated by a bounded number of vertices. 
    We confirm this conjecture for excluding any fixed \emph{wheel}, that is, a cycle together with a universal vertex, as an induced minor. 
\end{abstract}

\maketitle

\section{Introduction}
\label{sec:intro}

In their celebrated Grid Minor Theorem, Robertson and Seymour~\cite{RobertsonS:1986:GraphMinorsV} proved in 1986 that excluding any planar graph as a minor bounds the treewidth of a graph class. 
This result can be reformulated using the concept of balanced separators. 
We call a set~${S \subseteq V(G)}$ of vertices a \emph{balanced separator} of~$G$ (for a set~${X \subseteq V(G)}$) if no component of~${G - S}$ contains more than half of the vertices of~$G$ (or~$X$). 
Note that we do not require~$S$ to be a proper separator, that is, we do not require that~${G - S}$ has at least two components. 
Robertson and Seymour~\cite{RobertsonS:1986:GraphMinorsII,RobertsonS:1995:GraphMinorXIII} and Reed~\cite{Reed:1997:TWandTangles} also proved that the treewidth of a graph is linearly tied to the smallest integer~$k$ such that for every~${X \subseteq V(G)}$, the graph~$G$ has a balanced separator for~$X$ of size at most~$k$. 
Hence, as a consequence of the Grid Minor Theorem, for every planar graph~$H$ there exists an integer~$k_H$ such that every $H$-minor-free graph has a balanced separator of size at most~$k_H$. 
In fact, as a consequence of a result of Dvo\v{r}\'{a}k and Norin~\cite{DvorakNorin:2019:BalancedSeps}, the existence of small balanced separators in every subgraph implies bounded treewidth. 
Hence, the Grid Minor Theorem could be equivalently restated as follows: excluding any planar graph as a minor implies the existence of a small balanced separator.

\smallskip

In recent years, there has been a renewed interest to push these types of results into the setting of \emph{induced minors}, where we say that a graph~$H$ is an induced minor of a graph~$G$ if~$H$ is isomorphic to a graph that can be obtained from~$G$ by deleting vertices and contracting edges (and deleting any resulting parallel edges). 
While the setting of induced minors is much more challenging than that of minors (see, e.g.,~\cite{AboulkerBPT:2025:IndDisPaths}), graph classes closed under induced minors enjoy several good structural and algorithmic properties (see, e.g.,~\cite{GajarskyJLMPRS:2024,KorhonenL:2024:InducedMinorFree,AhnGHL:2025:CoarseEP,Korhonen:2023:inducedgridmaxdegree,BonnetDGTW:2023:MIS,BousquetDDHMPT:2026:IndMinorModels,CampbellDDFGHHWWY:2024+}).

Gartland and Lokshtanov conjectured the following variant of the Grid Minor Theorem for the induced setting, 
see the doctoral thesis of Gartland~\cite{Gartland:2023:Thesis}. 
To formulate it, we say a set~${S \subseteq V(G)}$ is \emph{dominated} by a set~${C \subseteq V(G)}$ if~${S \subseteq N[C]}$. 
Moreover, we say that for a graph~$H$, a graph~$G$ is \emph{$H${\dash}induced-minor-free} if~$G$ does not contain~$H$ as an induced minor. 

\begin{conjecture}
    \label{conjecture:main}
    For every planar graph~$H$ there exists an integer~$k_H$ such that every $H${\dash}induced-minor-free graph~$G$ has a balanced separator dominated by at most~$k_H$ vertices. 
\end{conjecture}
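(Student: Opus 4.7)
The statement extends this paper's wheel case to \emph{all} planar graphs~$H$, and my plan is to piggyback on the wheel case through a structural dichotomy. The first observation is that not every planar graph is an induced minor of a wheel: for instance, the~$3\times 3$ grid is planar, but any induced minor of a wheel~$W_n$ either contains the hub in some branch set (yielding a graph with a universal vertex) or avoids the hub entirely (yielding an induced minor of a cycle). Hence the general conjecture does not reduce to the wheel case by a trivial substitution. My strategy is instead to fix a threshold~$m=m(H)$ and argue separately according to whether the given $H${\dash}induced-minor-free graph~$G$ contains the wheel~$W_m$ as an induced minor.

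Concretely, for a fixed planar~$H$ I would proceed as follows. If~$G$ is $W_m${\dash}induced-minor-free, the wheel case of this paper applies directly and produces a balanced separator of~$G$ dominated by a bounded number of vertices. If~$G$ does contain~$W_m$ as an induced minor, I would combine this information with the classical Grid Minor Theorem: fix~$n$ so that~$H$ is a minor of the~$n\times n$ grid; if~$G$ has no~$n\times n$ grid minor, then~$G$ has treewidth bounded in terms of~$n$ and a small, hence self-dominated, balanced separator exists. Otherwise~$G$ has both a large wheel induced minor and a large grid minor, and I would argue that for~$m$ and~$n$ chosen large enough in terms of~$H$, the interaction of these two structures either allows one to honestly realize~$H$ as an induced minor of~$G$---contradicting the hypothesis---or pins down a bounded dominating set for a balanced separator, concentrated around the hub of the wheel.

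The main obstacle is this final upgrade step. The absence of an induced-minor analog of the Grid Minor Theorem means that a large grid minor need not reflect any useful induced structure in~$G$: complete bipartite graphs~$K_{n,n}$ have arbitrarily large grid minors, yet they exclude many small graphs as induced minors. So one must quantitatively exploit the ``extra'' edges between branch sets of a grid minor, either to build~$H$ as an honest induced minor by using the planarity of~$H$ to embed it cleanly, or to force a small dominating set for a balanced separator by pinning down a hub together with a bounded number of auxiliary vertices. I expect the proof to require an induction on the block-cut structure of~$H$, reducing to wheel-like pieces that can be handled by this paper's techniques and tree-like or path-like pieces that can be handled by more classical separator arguments; finding the right induction parameter, and in particular the right interface between these two regimes, is where I would expect the serious work to lie.
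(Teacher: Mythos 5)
This statement is \zcref{conjecture:main}, which the paper explicitly presents as an \emph{open conjecture} of Gartland and Lokshtanov; the paper does not prove it, but only the special case where $H$ is a wheel (\zcref{thm:main,thm:mainWeighted}). So there is no proof in the paper to compare yours against, and your proposal should be judged as a standalone attempt at the general conjecture.

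As such, it has a genuine gap, and you have in fact located it yourself: the entire difficulty of the conjecture sits inside your third branch, where $G$ contains both $W_m$ as an induced minor and an $n\times n$ grid as a minor, and you offer no mechanism for the ``upgrade step.'' Two concrete problems. First, a large grid \emph{minor} carries essentially no induced-minor information (as you note with $K_{n,n}$), and a large \emph{induced} wheel minor does not interact with it in any controlled way: the branch sets of the wheel model can be arbitrarily large and arbitrarily placed, so there is no reason the two structures overlap, and no known combinatorial lemma extracts either $H$ as an induced minor or a dominated separator from their coexistence. If such a lemma existed with $m,n$ depending only on $H$, it would essentially resolve the conjecture outright, so this step cannot be waved through. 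Second, even granting the structure, ``a bounded dominating set concentrated around the hub of the wheel'' does not yield a \emph{balanced} separator: the hub is a connected branch set of unbounded size whose closed neighbourhood need not split the weight of $G$ at all. The paper's actual argument for the wheel case never reasons from the \emph{presence} of structure; it reasons from its \emph{absence}, starting from a Gy\'{a}rf\'{a}s-type dominated separator $N[C\cup\{p\}]$ (\zcref{lem:Gyarfas(Cycle+Vertex)}) and using the exclusion of $W_\ell$ to bound the attachments of components to $C$ (\zcref{obs:WheelFromLargeNeighbourhood}). Any successful attack on the general conjecture would likewise need a replacement for \zcref{lem:Gyarfas(Cycle+Vertex)} adapted to the excluded graph $H$, not a case analysis on which minors $G$ happens to contain. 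The proposed induction on the block--cut structure of $H$ faces the additional obstacle that induced-minor containment does not decompose along cut vertices of $H$, so the interface you flag as ``where the serious work lies'' is indeed unresolved, and the proposal does not constitute a proof.
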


It is reasonable to also consider a weighted variant of balanced separators{\footnotemark}, 
as well as to relax the domination requirement by requiring instead that the separator is contained in a bounded number of balls of bounded diameter. 
In this regard, Abrishami et al.~\cite{AbrishamiCKPPR:2025} conjectured that the existence of weighted balanced separators contained in a bounded number of balls of bounded radius implies the existence of a tree-decomposition{\setcounter{footnote}{0}\footnotemark} such that every bag is contained in a bounded number of balls of bounded radius (possibly for larger bounds).
As shown by Abrishami et al., the radius~$1$ case of the conjecture is equivalent to the general case.
For radius~$1$, the assumption of the conjecture is equivalent to the existence of a weighted balanced separator dominated by a small set of vertices.
If this can be shown to imply the existence of a tree-decomposition such that every bag is contained in a bounded number of balls of radius $1$, this would imply, together with a weighted variant of \zcref{conjecture:main}, that for every planar graph~$H$ there exists an integer~$k_H$ such that every $H${\dash}induced-minor-free graph~$G$ admits a tree-decomposition in which each bag is dominated by at most~$k_H$ vertices.
Replacing in this statement the induced minor relation with the minor relation and strengthening the domination requirement with a bound on the cardinality of the bags restores the original Grid Minor Theorem. 

\footnotetext{For a precise definition, see \zcref{sec:prelims}.}

Several partial results regarding \zcref{conjecture:main} are known.
For example, the conjecture is valid if~$H$ is a path, by a classic argument due to Gy\'{a}rf\'{a}s~\cite{Gyarfas:1985:GyarfasPath}, see also~\cite{ChudnovskyPPT:2024:ApproxMWIS}. 
The conjecture is also known to hold for several other cases, including when~$H$ is a cycle~\cite{ChudnovskyPPT:2024:ApproxMWIS}, a disjoint union of cycles of length~$3$~\cite{AhnGHL:2025:CoarseEP}, a subdivided claw~\cite{ChudnovskyCLMS:2025:TIN5}, $K_5$ minus an edge, the $4$-wheel (that is, the graph obtained from a cycle of length~$4$ by adding a universal vertex), or a complete bipartite graph with exactly two vertices in one of the parts of the bipartition, see~\cite{DallardMS:2024:TWvsOmega3}. 

\smallskip
In this paper, we prove a common generalization of the aforementioned results for the cases when $H$ is a path, a cycle, or the $4$-wheel, by verifying \zcref{conjecture:main} for the case when $H$ is any wheel. 
For an integer~${\ell \geq 3}$, we denote by~$W_\ell$ the \emph{$\ell$-wheel}, that is, the graph obtained from a cycle of length~${\ell}$ by adding a universal vertex. 

\begin{theorem}
    \label{thm:main}
    For every integer~${\ell \geq 3}$ there exists an integer~$k$ such that every $W_\ell${\dash}induced-minor-free graph~$G$ has a balanced separator dominated by at most~$k$ vertices. 
\end{theorem}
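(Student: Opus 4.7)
The plan is to combine a BFS layering argument in the spirit of Gy\'arf\'as with a structural extraction that produces $W_\ell$ as an induced minor whenever a candidate separator cannot be dominated by few vertices.

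First, I would root a BFS at an arbitrary vertex $r$, obtaining layers $L_0 = \{r\}, L_1, L_2, \ldots$. For any weight function on $V(G)$, a standard weighted-median argument selects an index $i^\ast$ such that removing $L_{i^\ast}$ separates the vertices at strictly smaller distance from $r$ (of total weight at most $1/2$) from those at strictly larger distance (also of total weight at most $1/2$). The balanced separator we seek can then be taken to be $N[C]$ for any set $C \subseteq V(G)$ that dominates $L_{i^\ast}$. This reduces \zcref{thm:main} to the single claim: in a $W_\ell${\dash}induced-minor-free graph, every BFS layer is dominated by a set of at most $k = k(\ell)$ vertices.

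Second, I would attack this claim by a greedy cover. At round $j$, pick a vertex $c_j$ whose closed neighborhood covers as many vertices of the currently uncovered portion $R_j$ of $L_{i^\ast}$ as possible, update $R_{j+1} := R_j \setminus N[c_j]$, and continue until either $R_j = \emptyset$ or the number of iterations exceeds a threshold $k_\ell$ depending only on $\ell$. The aim is to prove that in a $W_\ell${\dash}induced-minor-free graph, $R_j$ is empty by the end of round $k_\ell$.

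The main obstacle, which I expect to be the combinatorial heart of the paper, is the contrapositive: if the greedy process still leaves vertices of $L_{i^\ast}$ uncovered after $k_\ell$ rounds, then $G$ must contain $W_\ell$ as an induced minor. The natural construction of the wheel is to select $\ell$ uncovered vertices of $L_{i^\ast}$ whose BFS-tree paths back to $r$ are pairwise ``nearly disjoint'' (by pigeonholing on the persistently large residual), take the hub branch set to be a carefully chosen deep common ancestor, and let the rim branch sets consist of these paths together with short connections within $L_{i^\ast}$. The technically demanding step is guaranteeing that the rim branch sets are pairwise anticomplete except in the cyclic order while all being adjacent to the hub; I anticipate that the specialized notions of \emph{wedges}, \emph{segments}, and \emph{barriers} introduced in the paper are tailored to provide exactly the iterative refinements that enforce this anticomplete condition without destroying the hub's adjacency to each rim branch set.
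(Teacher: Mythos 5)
Your proposal has a fundamental gap at its first reduction. You claim that in a $W_\ell$-induced-minor-free graph, every BFS layer (or at least the weighted-median layer) is dominated by $k(\ell)$ vertices, and propose to prove this by extracting $W_\ell$ from a persistently large residual. This claim is false even for trees, which exclude every wheel as an induced minor. Take the ``spider'' $T_{m,d}$ obtained by subdividing each edge of $K_{1,m}$ exactly $d-1$ times (so $m$ legs of length $d$ emanating from a center $r$). Rooting a BFS at $r$, the layer $L_i$ for $2 \leq i \leq d$ consists of $m$ vertices, one per leg, and their closed neighborhoods are pairwise disjoint; hence any set dominating $L_i$ has size at least $m$, which is unbounded. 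With uniform weights and $d$ large, the median layer $L_{i^\ast}$ lands in this regime, so neither the median layer nor any BFS layer enjoys bounded domination. Consequently, the contrapositive you propose to prove — ``greedy fails after $k_\ell$ rounds implies $W_\ell$ is an induced minor'' — cannot hold, since it would force $W_3$ into a tree.

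The confusion is between a BFS \emph{layer} and the Gy\'{a}rf\'{a}s \emph{path}. The paper's starting point (Lemma~\ref{lem:GyarfasPath}) is an induced path $P$ such that $N[P]$ is a balanced separator — a connected ``thread,'' not a level set. The connectivity of $P$ is essential: Lemma~\ref{lem:modifyBalancedSep} then shrinks $N[P]$ by replacing most of $P$ with the closed neighborhood of the heavy component, and iterating this either terminates with a separator dominated by two vertices, or produces an induced cycle $C$ and one extra vertex $p$ such that $N[C \cup \{p\}]$ is balanced (Lemma~\ref{lem:Gyarfas(Cycle+Vertex)}). The wheel exclusion then bounds the number of attachments of any component to $C$ (Observation~\ref{obs:WheelFromLargeNeighbourhood}), handling the case of a heavy component; when no component is heavy, the remaining work is a direct combinatorial analysis of how independent vertices hang off the cycle (the cobweb/wedge/barrier machinery), culminating in Lemma~\ref{lem:NoBigComponents}. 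None of this proceeds via BFS layers or a greedy domination cover. To salvage your outline you would need to replace the false layer-domination claim with the Gy\'{a}rf\'{a}s path and its trimming lemma, after which the heart of the argument remains the cycle analysis rather than a branch-set construction along BFS-tree paths.
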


In fact, we show the stronger weighted variant of this theorem, see \zcref{thm:mainWeighted}.

\paragraph{Algorithmic implications}
Given a graph $G$ with weights on its vertices, the \textsc{Maximum Weight Independent Set (MWIS)} problem is the problem of finding an independent set in $G$ of maximum total weight. \zcref{thm:main} implies that \textsc{MWIS} can be solved in subexponential time on the class of $W_\ell${\dash}induced-minor-free graphs; 
it also implies the existence of a QPTAS (quasi-polynomial-time approximation scheme) for MWIS in this graph class. 
To see that one can follow the proof of 
Theorem~5.1 in \cite{ChudnovskyPPT:2024:ApproxMWIS}, and then apply 
Theorems~3.5 and~3.6 from \cite{ChudnovskyPPT:2024:ApproxMWIS}.
Theorem~5.1 of \cite{ChudnovskyPPT:2024:ApproxMWIS}  deals with graphs with no long induced paths, but their only property used in the proof is the existence of a balanced separator with small domination number. 
For more details, see also~\cite[Theorems 1.2.5 and 1.2.6]{Gartland:2023:Thesis}, which also makes use of results from Bacs\'o et al.~\cite{BascoLMPTvL:2019}. 

\begin{corollary}
    \label{cor:mwis}
    For every integer~${\ell \geq 3}$, there exists a subexponential-time algorithm, as well as a QPTAS, for \textnormal{\textsc{MWIS}} 
    restricted to the class of $W_\ell${\dash}induced-minor-free graphs. 
\end{corollary}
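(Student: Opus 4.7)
The plan is to reduce the corollary to our main structural theorem and then invoke the general algorithmic framework of Chudnovsky, Pilipczuk, Pilipczuk, and Thomass\'e~\cite{ChudnovskyPPT:2024:ApproxMWIS}, exactly as the paragraph preceding the statement suggests. The key observation is that the proof of their Theorem~5.1, which gives both a subexponential-time algorithm and a QPTAS for \textsc{MWIS} on graphs excluding a long induced path, uses the hypothesis only through a single structural consequence: every induced subgraph admits a weighted balanced separator dominated by a bounded number of vertices. The weighted strengthening \zcref{thm:mainWeighted} of our \zcref{thm:main} provides exactly this consequence for $W_\ell${\dash}induced-minor-free graphs, and since this class is closed under vertex deletion, the recursion of the algorithm remains in the class throughout.

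First, I would check (trivially) that being $W_\ell${\dash}induced-minor-free is hereditary, so that every induced subgraph encountered during recursion still enjoys the conclusion of \zcref{thm:mainWeighted}. Next, I would extract from \zcref{thm:mainWeighted} the following black-box guarantee: given any weighted input graph $G$ in the class, one can compute in polynomial time a set $C \subseteq V(G)$ of size at most $k = k(\ell)$ such that $N[C]$ contains a weighted balanced separator of $G$. Together with hereditariness, this is the only property of the graph class that the downstream algorithms require.

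With this guarantee in hand, I would plug it into the branching-and-divide-and-conquer template of~\cite[Theorem~5.1]{ChudnovskyPPT:2024:ApproxMWIS}. Schematically, the template guesses the intersection of an optimal independent set with the small set $C$ (at most $2^k$ possibilities), uses the Bacs\'o~et~al.~\cite{BascoLMPTvL:2019} enumeration to produce a quasi-polynomial family of candidate partial solutions inside $N[C]$, and then recurses on the two halves of $G$ separated by the resulting balanced dominated separator. The balance condition forces the recursion depth to be $O(\log n)$, which yields a subexponential running time; the QPTAS follows by the same recursion composed with the rounding argument encapsulated in~\cite[Theorem~3.6]{ChudnovskyPPT:2024:ApproxMWIS}, as outlined in~\cite[Theorems~1.2.5 and~1.2.6]{Gartland:2023:Thesis}.

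Because Theorems~3.5 and~3.6 of~\cite{ChudnovskyPPT:2024:ApproxMWIS} are stated and proved for an arbitrary hereditary graph class satisfying the small-domination balanced-separator property, no new calculations are needed here. I do not expect a genuine obstacle; the only care required is a line-by-line verification that their proofs truly use no feature of long-induced-path-free graphs beyond the dominated balanced-separator guarantee. Once this audit is complete, both the subexponential-time algorithm and the QPTAS follow by citing the appropriate black box with \zcref{thm:mainWeighted} substituted as the structural input.
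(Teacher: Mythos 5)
Your proposal matches the paper's argument: the paper likewise derives the corollary by noting that the proof of Theorem~5.1 of~\cite{ChudnovskyPPT:2024:ApproxMWIS} uses only the dominated balanced-separator property (supplied here by \zcref{thm:mainWeighted} together with heredity of the class), and then applies Theorems~3.5 and~3.6 of that paper, with the Bacs\'o et al.\ results entering via~\cite{Gartland:2023:Thesis}. No substantive difference.
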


In fact, there are many more problems that can be solved in subexponential time on hereditary graph classes in which every graph has a dominated balanced separator, including \textsc{List 3-Colouring} \cite{Pawel:2025:PC}, \textsc{Feedback Vertex Set}, \textsc{Maximum Induced Matching}, and more~\cite{NovotnaOPRvLW:2021}; see also~\cite{KorhonenL:2024:InducedMinorFree}. 
For algorithmic applications, balanced separators that are dominated by a set of logarithmic or polylogarithmic size are often enough (see, e.g., \cite{MajewskiMMOPRS:2024:MWISnoLongClaws}).

\paragraph{Related work}
Induced variants of the Grid Minor Theorem have been studied in restricted settings.
For example, Korhonen \cite{Korhonen:2023:inducedgridmaxdegree} proved that if, in addition to excluding a planar graph as an induced minor, we bound the maximum degree of a graph class, that is, exclude some star~$K_{1,d}$ as a subgraph, then the treewidth of the graph class is bounded. 
Observe that for graphs of bounded degree, all the bags of a tree-decomposition of a graph are of bounded size if and only if each of them is dominated by a small number of vertices. 
In particular, \zcref{conjecture:main} holds for graphs of bounded maximum degree. 

A more general situation is when a star~${K_{1,d}}$ is excluded as an induced subgraph.
In this case, the above equivalence generalizes as follows: 
each bag of a tree-decomposition is dominated by a small number of vertices if and only if each bag induces a subgraph of bounded independence number.
Again, these conditions imply the existence of a balanced separator dominated by a small number of vertices. 
Hence, in the case when a fixed star~${K_{1,d}}$ is excluded as an induced subgraph, \zcref{conjecture:main} is equivalent to a conjecture by Dallard et al.~\cite{DallardKKMMW:2024} stating that, for every class of graphs excluding~$K_{1,d}$ as an induced subgraph and every planar graph~$H$ there exists an integer~$k$ such that every $H${\dash}induced-minor-free graph in the class has tree-independence number{\footnotemark} at most~$k$.
\footnotetext{The \emph{tree-independence number} of a graph~$G$ is the smallest integer~$k$ such that~$G$ admits a tree-decomposition in which every bag induces a subgraph with independence number at most~$k$ (see, e.g., \cite{Yolov:2018:YolovWidth,DallardMS:2024:TWvsOmega2,LimaMMORS:2024}).}
Besides the aforementioned cases of $H$ for which \zcref{conjecture:main} is known to hold, the conjecture of Dallard et al.~was verified by Choi et at.~\cite{ChoiHMW:2025} for the case when $H$ is a wheel.
Observe that \zcref{thm:main} generalizes this result.

Choi and Wiederrecht~\cite{ChoiWiederrecht:2025} gave an improved bound on the resulting tree-independence number and verified the conjecture of Dallard et al.~for the case where the planar graph that is excluded as an induced minor is a \emph{$k$-skinny ladder}, that is, a graph obtained from two disjoint $k$-vertex paths~$P$ and~$Q$ by adding for each~${i \in [k]}$ a new vertex~$v_i$ adjacent to the $i$-th vertex of~$P$ and the $i$-th vertex of~$Q$. 

Another, somewhat more general, related conjecture is the \emph{coarse grid minor conjecture} from Georgakopoulos and Papasoglu~\cite{GeorgakopoulosP:2025:CoarseGMT} which is formulated in the language of metric graph theory and ``asymptotic minors''. 
This can be seen as a ``distance version'' of the grid minor theorem related to the notion of ``fat minors'', which were independently introduced by Chepoi et al.~\cite{ChepoiDNRV:2012:FatMinors} and Bonamy et al.~\cite{BonamyBEGLPS:2024:FatMinors}. 
A weaker version of this conjecture in this language was conjectured by Davies et al.~\cite{DaviesHIM:2024:FarMinorsThinning}. 
However, both of these conjectures have recently been refuted by Albrechtsen and Davies~\cite{AlbrechtsenD:2025:CoarseGMTCounterexample}. 
Their counterexample leaves open the case where fat-minors correspond to induced minors, so the conjectures of Dallard et al.~\cite{DallardKKMMW:2024} and \zcref{conjecture:main} remain open. 
In the same vein, other classical results in structural graph theory have been studied in this coarse setting in~\cite{GeorgakopoulosP:2025:CoarseGMT}, and the related distance settings and induced settings, 
see for example~\cite{AlbrechtsenHTJKW:2024:CoarseMenger,HendreyNST:2024:CoarseMenger,NguyenSS:2024:CoarseMengerCounterexample} for results related to Menger's Theorem 
and~\cite{AhnGHL:2025:CoarseEP,DujmovicJMM:2025:CoarseEP} for results related to the Erd\H{o}s-P\'{o}sa Theorem.

\paragraph{Structure of the paper}
This paper is structured as follows. 
After introducing the necessary notation in \zcref{sec:prelims}, we prove some necessary lemmas in \zcref{sec:main}. 
In this section, we also formulate the main lemma, and, given its validity, give a proof of the main theorem. 
Then, in \zcref{sec:cobwebs}, we introduce the necessary machinery to prove the main lemma from the previous section, which we then prove in \zcref{sec:mainlemma}.

\section{Preliminaries}
\label{sec:prelims}

We take any standard notions and notation not explained here from~\cite{Diestel:GT5}. 

\paragraph{Basic notions and notation} 
Given an integer~$k$, we write~${[k] \coloneqq \{ i \in \mathbb{Z} \colon 1 \leq i \leq k\}}$ for the set of all positive integers less or equal to~$k$. 

All graphs in this paper are finite and simple. 
Given a graph~$G$, we denote its vertex set by~$V(G)$ and its edge set by~$E(G)$. 
For convenience, slightly abusing the notation, we sometimes refer to the vertex set of a graph~$G$ just by~$G$, in order to simplify some expressions in the cases where there is no danger of confusion. 
In particular, when a function or operator~$f$ takes a set of vertices~$A$ as its input, we may input a subgraph~$H$ instead of~$A$, so $f(H)$ will denote~$f(V(H))$. 
Moreover, if~$H$ is a subgraph of~$G$ and~${A \subseteq V(G)}$, we may also write~$f(H \cup A)$ and~$f(H \cap A)$ to denote~$f(V(H) \cup A)$ and~$f(V(H) \cap A)$, respectively. 

For a graph~$G$ and a set~${A \subseteq V(G)}$, we write~$G[A]$ for the induced subgraph of~$G$ with vertex set~$A$. 
For a vertex~${v \in V(G)}$, we denote by~$N_G(v)$ the \emph{(open) neighbourhood} of~$v$, that is, the set of vertices in~$G$ adjacent to~$v$, and by~$N_G[v]$ the \emph{closed neighbourhood} of~$v$, that is, the set~${N_G(v) \cup \{v\}}$. 
Similarly, for a set~${A \subseteq V(G)}$, 
the \emph{(open) neighbourhood} of~$A$ is the set~${N_G(A) \coloneqq \left(\bigcup_{v \in A} N_G(v) \right) \setminus A}$, 
and the \emph{closed neighbourhood} of~$A$ is the set~${N_G[A] \coloneqq \bigcup_{v \in A} N_G[v]}$. 
For all these notions, we drop the subscript if the ambient graph is clear from context. 

Given two graphs~$G$ and~$H$, we denote by~${G \cap H}$ the \emph{intersection graph}, that is the graph with vertex set~${V(G) \cap V(H)}$ and edge set~${E(G) \cap E(H)}$. 
Note that the intersection graph need not be an induced subgraph of either~$G$ or~$H$. 
Given two graphs~$G$ and~$H$, we denote by~${G \cup H}$ the \emph{union graph}, that is, the graph with vertex set~${V(G) \cup V(H)}$ and edge set~${E(G) \cup E(H)}$. 
Given a graph~$G$ and a set~${A \subseteq V(G)}$, we denote the graph~${G[V(G) \setminus A]}$ obtained by deleting all vertices in~$A$ by~${G-A}$. 
When~${A = \{a\}}$ is a singleton, we instead may also write~${G-a}$. 

We call a family of graphs \emph{intersecting} if the pairwise intersections of the vertex sets of these graphs are non-empty.

\paragraph{Paths and separators} 
We denote a path as a string of its vertices, that is, if we have a set~$\{v_1, \dots, v_k\}$ of~$k$ distinct vertices, then ${v_1 v_2 \dots v_k}$ is the path with vertex set~$\{v_1, \dots, v_k\}$ and edge set~$\{ v_i v_{i+1} \colon i \in [k-1]\}$, where~$k$ is a positive integer. 

An \emph{$(a,b)$-path} is a path whose set of endpoints is equal to~$\{a,b\}$. 
Given a graph~$G$ and sets~${A,B \subseteq V(G)}$ of vertices, an \emph{$(A,B)$-path} is an $(a,b)${\dash}path in~$G$ with~${a \in A}$, ${b \in B}$, and no internal vertex in~${A \cup B}$. 
Given two subgraphs~$H$ and~$H'$ of~$G$, we also call an~$(V(H),V(H'))$-path an~\emph{$(H,H')$-path} instead. 

A set~${S \subseteq V(G)}$ \emph{separates} $A$ from~$B$ if it intersects every $(A,B)$-path. 
When~${A = \{v\}}$ for a single vertex~$v$, we also just say that~$S$ separates~$v$ from~$B$. 
We may also say that~$S$ of~$G$ separates a subgraph~$H$ from~$B$ to mean that~$S$ separates~$V(H)$ from~$B$.

\paragraph{Minors and induced minors} 
Let us recall some terminology regarding minors and induced minors. 

A graph~$H$ is a \emph{minor} of a graph~$G$ if~$H$ is isomorphic to a graph that we can obtain from~$G$ by deleting vertices, deleting edges, and contracting edges. 
We say~\emph{$G$ excludes $H$ as a minor} if~$H$ is not a minor of~$G$, in which case we also say that~$G$ is \emph{$H$-minor-free}. 

A graph~$H$ is an \emph{induced minor} of a graph~$G$ 
if there exists a family~$\{ X_v \colon v \in V(H)\}$ of pairwise vertex-disjoint subsets of $V(G)$ such that~$G[X_v]$ is connected for all~${v \in V(H)}$ and there exists an edge in $G$ between~$X_v$ and~$X_w$ if and only of~${vw \in E(H)}$, for any two distinct ${v,w \in V(H)}$.
We call such a family~$\{ X_v \colon v \in V(H)\}$ and \emph{induced minor model of~$H$ in~$G$}. 
Equivalently, $H$ is an induced minor of $G$ if~$H$ is isomorphic to a graph that we can obtain from~$G$ by deleting vertices and contracting edges (and deleting parallel edges created at each step). 
We say~\emph{$G$ excludes $H$ as an induced minor} if~$H$ is not an induced minor of~$G$, in which case we also say that~$G$ is \emph{$H$-induced-minor-free}. 

For every positive integer~$n$, we denote the complete graph on~$n$ vertices by~$K_n$. 
Note that~$K_n$ is a minor of a graph~$G$ if and only if~$K_n$ is an induced minor of~$G$. 

Let~$v$ be a vertex in~$G$ of degree~$2$. 
The operation of \emph{suppressing}~$v$ is the operation of contracting exactly one edge incident to~$v$. 

Recall that for each integer~${\ell \geq 3}$, we denote by~$W_\ell$ the \emph{$\ell$-wheel}, that is, the graph obtained from a cycle of length~${\ell}$ by adding a universal vertex. 

\begin{observation}
    \label{obs:WheelFromLargeNeighbourhood}
    Let~${\ell \geq 3}$ be an integer, let~$G$ be a graph, and let~$C$ be an induced cycle in~$G$. 
    If some component of~${G - V(C)}$ has at least~$\ell$ neighbours in~$C$, then~$G$ contains~$W_\ell$ as an induced minor. 
\end{observation}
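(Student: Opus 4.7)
The plan is to construct an explicit induced minor model of $W_\ell$. The universal vertex of the wheel will come from the component $K$, and the cycle of length $\ell$ will be obtained by carving up $C$ into $\ell$ arcs using some $\ell$ attachment points of $K$.

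More precisely, let $K$ be a component of $G - V(C)$ with at least $\ell$ neighbours on $C$. I would fix an arbitrary cyclic orientation of $C$ and pick $\ell$ vertices $c_1, c_2, \dots, c_\ell$ on $C$, listed in the order in which they appear around $C$, each of which has a neighbour in $K$. These $\ell$ vertices split $C$ into $\ell$ arcs; for $j \in [\ell]$, let $P_j$ be the arc from $c_j$ to $c_{j+1}$ (indices modulo $\ell$), and set
\[
X_0 \coloneqq V(K), \qquad X_j \coloneqq V(P_j) \setminus \{c_{j+1}\} \quad \text{for } j \in [\ell].
\]
I would then claim that $\{X_0, X_1, \dots, X_\ell\}$ is an induced minor model of $W_\ell$, with $X_0$ playing the role of the universal vertex.

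The verification is routine but uses the hypotheses crucially. First, pairwise disjointness and connectedness are immediate: $X_0$ is connected since $K$ is a component, each $X_j$ is a (non-empty) subpath of $C$, and the $X_j$'s are disjoint by construction and disjoint from $X_0$ since $K \subseteq G - V(C)$. For the adjacencies, each $c_j \in X_j$ has a neighbour in $K = X_0$, so there is an edge between $X_0$ and every $X_j$; and the last vertex of $P_j$ (i.e.\ the predecessor of $c_{j+1}$ on $C$, which equals $c_j$ if $P_j$ has length one) is adjacent on $C$ to $c_{j+1} \in X_{j+1}$, giving the required ``cycle'' edges.

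The only slightly delicate point, and the one place where inducedness of $C$ is essential, is ruling out edges between non-consecutive $X_j$ and $X_{j'}$. Since $X_j \cup X_{j'} \subseteq V(C)$ and $C$ is induced in $G$, any edge of $G$ between $X_j$ and $X_{j'}$ must be an edge of $C$; but the edges of $C$ joining two different arcs occur only at the boundary between cyclically consecutive arcs, so no such edge exists when $X_j$ and $X_{j'}$ are non-consecutive. This completes the model and hence exhibits $W_\ell$ as an induced minor of $G$.
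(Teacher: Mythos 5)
Your proof is correct and is essentially the same construction as in the paper: the component plays the role of the hub, and the cycle $C$ is carved up at $\ell$ attachment points into $\ell$ arcs that become the rim of the wheel. The only cosmetic difference is presentation: you exhibit an explicit branch-set model $\{X_0,\dots,X_\ell\}$, whereas the paper describes the same construction as a sequence of edge contractions and degree-$2$ suppressions (choosing the $\ell$ attachment points to be consecutive along $C$); both versions are equally valid.
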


\begin{proof}
    Let~$D$ be a component of~${G-V(C)}$ with at least~$\ell$ neighbours in~$C$ and let~${P \subseteq C}$ be a minimal connected subgraph of~$C$ containing exactly~$\ell$ neighbours of~$D$. 
    Observe that~$P$ is a (not necessarily induced) path whose endpoints are both neighbours of~$D$.
    Then contracting in $G[C \cup D]$ each edge in~$E(D)$ and all but one edge in~$E(C)\setminus E(P)$ and suppressing each vertex of degree~$2$ yields the~$W_\ell$ induced minor. 
\end{proof}

\paragraph{Treewidth} 
A \emph{tree-decomposition} of a graph~$G$ consists of a tree~$T$ and a subtree~$T_v$ of~$T$ for each vertex~$v$ of~$G$ such that for every edge~$uv$ of~$G$, $T_u$ and $T_v$ have a common node.
For each node~$t$ of the tree~$T$, we let~${X_t \coloneqq \{v \colon t \in V(T_v)\}}$ be the \emph{bag} corresponding to~$t$ and define the \emph{width} of the tree-decomposition as the maximum of~${\abs{X_t}-1}$ over the nodes~$t$ of the tree. 
The \emph{treewidth} of~$G$, denoted by~$\tw(G)$, is the minimum of the widths of its tree-decompositions.

We will use the following lemma. 

\begin{lemma}[see \cite{BrandstaedtLS:1999:GraphClasses}]
    \label{lem:SeriesParallelTW}
    Every $K_4$-minor-free graph has treewidth at most~$2$. 
\end{lemma}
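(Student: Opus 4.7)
The plan is to proceed by induction on~$|V(G)|$. The base case~${|V(G)| \leq 3}$ is immediate: a single bag equal to~$V(G)$ gives a tree-decomposition of width at most~$2$.

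For the inductive step, I would first reduce to the 2-connected case. If~$G$ is disconnected, apply induction to each component and combine the resulting tree-decompositions into a single one by joining their trees via any new edge. If~$G$ has a cut vertex~$x$, write~${G = G_1 \cup G_2}$ with~${V(G_1) \cap V(G_2) = \{x\}}$ and each~$G_i$ a proper subgraph of~$G$; both~$G_i$ are $K_4$-minor-free, so induction supplies tree-decompositions of width at most~$2$, which can be glued by identifying a bag in each containing~$x$.

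So assume~$G$ is 2-connected with~${|V(G)| \geq 4}$. The key claim is that~$G$ admits a 2-vertex separator~${\{u,v\}}$. Given such a separator, I would write~${G = G_1 \cup G_2}$ with~${V(G_1) \cap V(G_2) = \{u,v\}}$ and each~$G_i$ a proper subgraph of~$G$, and apply induction to~${G_i + uv}$ for each~${i \in \{1, 2\}}$. These graphs remain $K_4$-minor-free, since any $K_4$-minor in~${G_i + uv}$ using the edge~$uv$ could be realized in~$G$ by replacing that edge with a $(u,v)$-path through~$G_{3-i}$, which exists by the 2-connectedness of~$G$. The resulting tree-decompositions can then be glued by identifying a bag in each containing~${\{u,v\}}$, which exists because~$uv$ is an edge of~${G_i + uv}$.

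The main obstacle is the structural claim that every 2-connected $K_4$-minor-free graph on at least~$4$ vertices has a 2-separator, equivalently, that no graph on at least~$4$ vertices is simultaneously 3-connected and $K_4$-minor-free. I would prove this by showing that every 3-connected graph on at least~$4$ vertices contains~$K_4$ as a minor, for instance via Tutte's wheel theorem: every such graph arises from a smaller 3-connected graph or a wheel by vertex splits and edge additions, both of which preserve the presence of a~$K_4$ minor, and wheels trivially contain~$K_4$ as a subgraph. An alternative route is to invoke Dirac's or Duffin's classical characterization of series-parallel graphs directly, from which the treewidth bound follows by a standard recursion on the series/parallel decomposition tree.
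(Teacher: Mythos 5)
Your proof is correct, and it is self-contained, whereas the paper does not prove this lemma at all: it simply cites the textbook of Brandst\"{a}dt, Le and Spinrad. So there is no ``paper proof'' to compare against; what you have written is the standard inductive argument via $1$- and $2$-separators, which is indeed one of the usual ways to establish that $K_4$-minor-free graphs coincide with the graphs of treewidth at most~$2$ (equivalently, subgraphs of $2$-trees, or series-parallel graphs plus their disconnected and $1$-separable extensions).

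Two small points worth tightening if this were to appear as a written proof. First, the existence of a $(u,v)$-path internally contained in~$G_{3-i}$ deserves a sentence: since~$G$ is $2$-connected and~$\{u,v\}$ is a separator, each component~$D$ of~$G-\{u,v\}$ has both~$u$ and~$v$ among its neighbours (otherwise a single vertex would separate~$D$), and any such component lying in~$G_{3-i}$ yields the required path. Second, the structural claim that every $3$-connected graph on at least~$4$ vertices has a $K_4$ minor can be obtained more directly than via Tutte's wheel theorem: a $3$-connected graph has minimum degree at least~$3$, and by a classical theorem of Dirac every graph of minimum degree at least~$3$ contains a subdivision of~$K_4$. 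Either route is fine; the wheel-theorem argument you sketch also works, since $W_n$ for $n\geq 3$ contains a $K_4$ subdivision and both vertex splitting and edge addition preserve the existence of a $K_4$ minor.
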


\paragraph{Vertex-weighted graphs and balanced separators} 
In this paper, we consider graphs where the vertices are weighted. 
Consider a \emph{weighting}~${\mathsf{w} \colon V(G) \to \mathbb{R}_{\geq 0}}$  of~$G$ with non-negative reals. 
For~${A \subseteq V(G)}$, we write~${\mathsf{w}(A) \coloneqq \sum_{v \in A} \mathsf{w}(v)}$ for the \emph{$\mathsf{w}$-weight of~$A$}. 
Similarly, for a subgraph~$H$ of~$G$, we write~$\mathsf{w}(H)$ for~$\mathsf{w}(V(H))$. 
We say~$\mathsf{w}$ is \emph{non-trivial} if~${\mathsf{w}(G) > 0}$, and \emph{trivial}, otherwise. 
We call a subgraph~$H$ of~$G$ \emph{heavy} (with respect to~$\mathsf{w}$) if~${\mathsf{w}(H) > \nicefrac{1}{2} \cdot \mathsf{w}(G)}$. 

Given a graph~$G$ and a weighting~$\mathsf{w}$ of~$G$, we call a set~${S \subseteq V(G)}$ of vertices a \emph{$\mathsf{w}${\dash}balanced separator of~$G$} if no component of~${G - S}$ is heavy. 
Note that for the trivial weighting~$\mathsf{c}_0$ of a graph~$G$, where~${\mathsf{c}_0(v)=0}$ for every~$v$, the empty set is a~$\mathsf{c}_0$-balanced separator, and for the weighting~$\mathsf{c}_1$ of~$G$ that assigns weight~$1$ to each vertex of~$G$, a $\mathsf{c}_1$-balanced separator is exactly a balanced separator as defined in the introduction. 

For convenience, we often want to look at \emph{normal} weightings, that is weightings with~${\mathsf{w}(G) = 1}$. 
Clearly, given any graph~$G$ with a non-trivial weighting~$\mathsf{w}$ of~$G$, we can define a normal weighting~$\mathsf{w}'$ by setting~${\mathsf{w}'(v) = \nicefrac{\mathsf{w}(v)}{\mathsf{w}(G)}}$ for all~${v \in V(G)}$. 
It is now easy to see that a set~${S \subseteq V(G)}$ is a $\mathsf{w}$-balanced separator if and only if it is a $\mathsf{w}'$-balanced separator. 

If a set~${S \subseteq V(G)}$ is not a $\mathsf{w}$-balanced separator for some normal weighting~$\mathsf{w}$, then there exists a heavy component~$D$ of~${G-S}$. 
Note that this component is unique, as if~${D' \neq D}$ would also be such a component, then since~$D$ and~$D'$ are disjoint, we get~${\mathsf{w}(G) \geq \mathsf{w}(D) + \mathsf{w}(D') > \nicefrac{1}{2} + \nicefrac{1}{2} = 1}$, contradicting the normality of~$\mathsf{w}$. 

We make the following standard observation, for which we include a proof for the sake of completeness. 

\begin{observation}
    \label{obs:TreeBalancedSep}
    Every tree~$T$ with a weighting~$\mathsf{w}$ has a $\mathsf{w}$-balanced separator of size~$1$. 
\end{observation}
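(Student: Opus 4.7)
The plan is to reduce to the non-trivial normalized case and then run a standard centroid-finding walk on~$T$. If $\mathsf{w}$ is trivial, then $\varnothing$ is already a $\mathsf{w}$-balanced separator so any singleton works; otherwise, by the discussion preceding the observation, we may replace $\mathsf{w}$ by its normalization and assume $\mathsf{w}(T) = 1$. The task is then to locate a vertex $v \in V(T)$ such that every component of $T - v$ has $\mathsf{w}$-weight at most $\nicefrac{1}{2}$.

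To find such a $v$, I would iteratively walk towards the heaviest component. Start at an arbitrary $v_0 \in V(T)$. At step $i$, if $\{v_i\}$ is already a $\mathsf{w}$-balanced separator, stop; otherwise $T - v_i$ has a unique heavy component $D_i$ (uniqueness was noted in the paragraph preceding the observation), and let $v_{i+1}$ be the unique neighbour of $v_i$ lying in $D_i$. This produces a sequence $v_0, v_1, v_2, \ldots$ in $V(T)$ and, provided it is finite, terminates at the desired vertex.

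The only thing that requires checking is that this sequence terminates, and the crux is the following: after moving from $v_i$ to $v_{i+1}$, the component of $T - v_{i+1}$ that contains $v_i$ is a subset of $V(T) \setminus V(D_i)$, whose $\mathsf{w}$-weight is strictly less than $\nicefrac{1}{2}$ since $\mathsf{w}(D_i) > \nicefrac{1}{2}$. Hence a heavy component of $T - v_{i+1}$, if one exists, must lie strictly beyond $v_{i+1}$ in a direction away from~$v_i$, so an easy induction shows that no vertex of $T$ is ever visited twice. Finiteness of $T$ then forces the procedure to halt at a vertex that yields the claimed $\mathsf{w}$-balanced separator of size~$1$. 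The only delicate point in the whole argument is precisely this strict-inequality observation, which is the standard reason the ``weighted centroid'' of a tree exists; everything else is routine.
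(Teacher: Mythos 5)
Your proof is correct and is essentially the paper's argument in procedural form: the walk you describe moves along exactly the edge orientation the paper defines (each edge of~$T$ oriented towards its heavy side), and your termination argument is a hands-on verification that this orientation of a tree has a sink, which is precisely the vertex the paper selects.
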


\begin{proof}
    Without loss of generality, we assume~$\mathsf{w}$ is normal. 
    If there exists an edge~$tu$ of~$T$ such that both components of the forest~$T_{tu}$ obtained from~$T$ by deleting the edge~$tu$ are not heavy, then~$\{t\}$ is a $\mathsf{w}$-balanced separator. 
    So we may assume that for each edge~$tu$ of~$T$, exactly one component of~${T_{tu}}$ is heavy. 
    Orienting every edge towards the node in the unique heavy component of~${T_{tu}}$ yields a sink~$s$ in that orientation. 
    Now clearly $\{s\}$ is a $\mathsf{w}$-balanced separator. 
\end{proof}

We will also use the fact that the treewidth of a graph bounds the size of a smallest balanced separator. 

\begin{lemma}[{\cite[Lemma 7.19]{CyganFKLMPPS:2015:ParameterizedAlgorithms}}]
    \label{cor:TreeWidthBalancedSep}
    Every graph~$G$ with a weighting~$\mathsf{w}$ has a $\mathsf{w}$-balanced separator of size at most~$\tw(G) + 1$. 
\end{lemma}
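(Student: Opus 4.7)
The plan is to mimic the proof of \zcref{obs:TreeBalancedSep}, replacing the tree with a tree-decomposition of minimum width. Without loss of generality, I may assume~$\mathsf{w}$ is normal, and I fix a tree-decomposition $(T, (X_t)_{t \in V(T)})$ of~$G$ of width~$\tw(G)$. The aim is to locate a node~${t^* \in V(T)}$ whose bag~$X_{t^*}$ is itself a $\mathsf{w}$-balanced separator, which would immediately yield the bound~${\abs{X_{t^*}} \leq \tw(G) + 1}$.

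For each edge~$tu$ of~$T$, deleting~$tu$ splits~$T$ into two subtrees~$T^t$ and~$T^u$ containing~$t$ and~$u$ respectively, giving rise to the two vertex sets ${V^t \coloneqq \bigcup_{s \in V(T^t)} X_s \setminus X_u}$ and ${V^u \coloneqq \bigcup_{s \in V(T^u)} X_s \setminus X_t}$. The subtree property of the tree-decomposition guarantees that~$V^t$ and~$V^u$ are disjoint, since any shared vertex would have to appear in both~$X_t$ and~$X_u$. I then orient the edge~$tu$ toward~$u$ if~${\mathsf{w}(V^u) > \nicefrac{1}{2}}$, and toward~$t$ otherwise. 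This orientation is consistent: if some edge were oriented in both directions, the disjoint sets~$V^t$ and~$V^u$ would together have weight strictly greater than~$1$, contradicting the normality of~$\mathsf{w}$.

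Since~$T$ is finite, the resulting oriented tree has a sink~$t^*$, and I claim that~$X_{t^*}$ is a $\mathsf{w}$-balanced separator. For any component~$D$ of~${G - X_{t^*}}$, the connectedness of~$D$ combined with the subtree property forces~$V(D)$ to be contained in the union of bags corresponding to a single component~$T'$ of~${T - t^*}$, which is attached to~$t^*$ via a unique edge~${t^* u}$. Because~$t^*$ is a sink, that edge is oriented toward~$t^*$, so ${\mathsf{w}(V^u) \leq \nicefrac{1}{2}}$ for the associated set~$V^u$; together with ${V(D) \subseteq V^u}$ this yields~${\mathsf{w}(D) \leq \nicefrac{1}{2}}$. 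The only delicate point in this plan is verifying that no edge receives inconsistent orientations, which reduces cleanly to the subtree property of tree-decompositions; once that is in place, the sink-finding argument proceeds exactly as in the proof of \zcref{obs:TreeBalancedSep}.
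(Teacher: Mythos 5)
The paper does not prove this lemma itself; it only cites it from the textbook of Cygan et al. Your argument is a correct and essentially standard proof: fix a minimum-width tree-decomposition, orient each tree edge toward the side whose vertex set has weight more than $\nicefrac{1}{2}$, find a sink~$t^*$, and check that~$X_{t^*}$ is a $\mathsf{w}$-balanced separator of size at most~$\tw(G)+1$. Your key supporting facts --- that~$V^t$ and~$V^u$ are disjoint by the subtree property (so at most one side of any edge can be heavy), and that every component of~$G - X_{t^*}$ has its vertices confined, again by the subtree property, to a single branch~$T'$ of~$T - t^*$ and hence to the corresponding set~$V^u$ --- are both correct and are exactly what makes the sink work. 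One small wrinkle worth noting: the rule ``orient~$tu$ toward~$u$ if~$\mathsf{w}(V^u) > \nicefrac{1}{2}$, and toward~$t$ otherwise'' is not symmetric in the two endpoints, so the default branch is not canonically defined; the cleaner statement is ``orient toward a heavy side if one exists, and arbitrarily otherwise.'' This does not affect the argument: the only property you use at the sink is that every incident edge~$t^*u$ has~$\mathsf{w}(V^u) \leq \nicefrac{1}{2}$, which holds for any rule that always orients toward a heavy side.
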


\section{The main theorem}
\label{sec:main}

\subsection{Thinning out balanced separators}

Let us first prove a very useful lemma about turning a balanced separator into a smaller one. 

\begin{lemma}
    \label{lem:modifyBalancedSep}
    Let~$G$ be a graph, let~$\mathsf{w}$ be a normal weighting of~$G$ and let~${X, Y \subseteq V(G)}$, and let~${Z \subseteq X}$.  
    Suppose that~${X \cup Y}$ is a $\mathsf{w}$-balanced separator of~$G$ but that~$Z$ is not. 
    Let~$B$ be the heavy component of~${G - Z}$. 
    Then~${S \coloneqq (N[B] \cap X) \cup Y}$ is a $\mathsf{w}$-balanced separator of~$G$. 
\end{lemma}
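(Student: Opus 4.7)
The plan is to take an arbitrary component $D$ of $G - S$ and show $\mathsf{w}(D) \leq \nicefrac{1}{2}$. The case analysis will turn on whether $D$ meets the heavy component~$B$ or not.

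The key preliminary observation I will record is that since $B$ is a component of $G - Z$, every vertex outside $B$ that is adjacent to $B$ lies in~$Z$, and hence (since $Z \subseteq X$) in $N[B] \cap X \subseteq S$. In other words, the whole ``boundary'' of $B$ inside $G$ sits in our candidate separator $S$. I will also note that $S \subseteq X \cup Y$, so any component of $G - (X \cup Y)$ is contained in some component of $G - S$; in particular I can transfer the balanced separator property of $X \cup Y$ to components of $G - S$ whenever those components are disjoint from $X \cup Y$.

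For the case $V(D) \cap B \neq \emptyset$, I will argue that $D \subseteq B$: any path in $G - S$ from a vertex of $B$ to a vertex outside $B$ would have to use a vertex of $N_G(B) \setminus B \subseteq N[B] \cap X \subseteq S$, which is impossible. Once $D \subseteq B$, the relation $D \subseteq N[B]$ together with $D \cap S = \emptyset$ forces $D \cap X = \emptyset$, and since $D \cap Y = \emptyset$ as well, $D$ lies inside a single component of $G - (X \cup Y)$. The hypothesis that $X \cup Y$ is $\mathsf{w}$-balanced then yields $\mathsf{w}(D) \leq \nicefrac{1}{2}$.

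For the case $V(D) \cap B = \emptyset$, the argument is immediate from weights: $D \subseteq V(G) \setminus B$, and because $\mathsf{w}$ is normal and $B$ is heavy, $\mathsf{w}(V(G) \setminus B) < \nicefrac{1}{2}$, so $\mathsf{w}(D) < \nicefrac{1}{2}$. I do not really foresee a main obstacle here; the proof is essentially a careful bookkeeping exercise, and the only subtle point is the observation that the boundary of $B$ is contained in $N[B] \cap X$, which is exactly the part of $X$ that survived into $S$.
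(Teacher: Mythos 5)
Your proof is correct and follows essentially the same route as the paper's: both argue that every component $D$ of $G-S$ is either contained in $B$ or disjoint from $B$ (via the observation $N(B)\subseteq Z\subseteq N[B]\cap X\subseteq S$), then handle the disjoint case by weight and the contained case by reducing to a component of $G-(X\cup Y)$. The only cosmetic difference is that you deduce $D\cap(X\cup Y)=\emptyset$ elementwise, while the paper notes directly that $B-S=B-(X\cup Y)$; these are the same computation.
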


\begin{proof}
    Consider a component~$D$ of~${G - S}$. 
    We first observe that~$D$ is either a subgraph of~$B$ or vertex-disjoint from~$B$. 
    Indeed, otherwise there exists~${v \in V(D \cap N(B))}$, implying~${v \in N[B] \cap Z}$, which would result in~${v \in S}$. 
    
    If~$D$ is vertex-disjoint from~$B$, then ${\mathsf{w}(D) \leq 1 - \mathsf{w}(B) < \nicefrac{1}{2}}$. 
    So suppose~$D$ is a subgraph of~$B$, and hence of~${B - S = B - (X \cup Y)}$. 
    Since~$D$ is connected, it is a subgraph of a component of~${G - (X \cup Y)}$, 
    and since~${X \cup Y}$ is a $\mathsf{w}$-balanced separator of~$G$, we conclude that~$\mathsf{w}(D) \leq \nicefrac{1}{2}$. 
\end{proof}

\subsection{Gy\'{a}rf\'{a}s cycles}

We need the following Gy\'{a}rf\'{a}s path argument that follows from the proof of~{\cite[Lemma~5.3]{ChudnovskyPPT:2024:ApproxMWIS}}. 

\begin{lemma}
    \label{lem:GyarfasPath}
    For every graph~$G$ and weighting~$\mathsf{w}$ of~$G$ there exists an induced path~$P$ in~$G$ such that~$N[P]$ is a $\mathsf{w}$-balanced separator of~$G$.     
\end{lemma}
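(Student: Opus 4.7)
The plan is to run the classical Gy\'{a}rf\'{a}s iterative path-building argument, adapted to the weighted setting. First, some easy reductions: if $\mathsf{w}$ is trivial, any single vertex works as $P$; otherwise rescale so that $\mathsf{w}$ is normal, which does not affect the balanced-separator notion. If no component of $G$ is heavy, any single-vertex $P$ already yields $N[P]$ as a $\mathsf{w}$-balanced separator, so we may assume some (unique) component $C$ of $G$ is heavy. It then suffices to work inside $C$, since every component of $G - N[P]$ (for $P \subseteq C$) is either contained in $C \setminus N_C[P]$ or in a non-heavy component of $G$; hence we may assume that $G$ itself is connected.

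We iteratively construct an induced path $v_1 v_2 \cdots$ together with a strictly decreasing chain $B_1 \supsetneq B_2 \supsetneq \cdots$ of heavy components of the successive graphs $G - N[\{v_1, \dots, v_i\}]$, and stop when no heavy component remains. Start with an arbitrary $v_1$; if $N[v_1]$ is already a $\mathsf{w}$-balanced separator, we are done, and otherwise let $B_1$ be the unique heavy component of $G - N[v_1]$. For the inductive step, assume we have constructed an induced path $v_1, \dots, v_i$ together with $B_1 \supsetneq \cdots \supsetneq B_i$, and maintain the invariant that for $j \geq 2$, $v_j \in B_{j-2} \cap N(v_{j-1}) \cap N(B_{j-1})$, where $B_0 := V(G)$. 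Choose $v_{i+1} \in N(B_i) \cap B_{i-1} \cap N(v_i)$. The non-emptiness of this intersection follows from: $v_i \notin B_{i-1}$ (because $v_i \in N(v_{i-1}) \subseteq N[\{v_1, \dots, v_{i-1}\}]$), so $B_i$ is a component of $B_{i-1} \setminus N(v_i)$; and because $B_{i-1}$ is connected and $B_{i-1} \cap N(v_i)$ is non-empty (by the invariant on $v_i$), the cut $B_{i-1} \cap N(v_i)$ between $B_i$ and $B_{i-1} \setminus B_i$ must touch $B_i$. Since $v_{i+1} \in B_{i-1}$ and $B_{i-1}$ avoids $N[\{v_1, \dots, v_{i-1}\}]$, the new vertex $v_{i+1}$ is distinct from and non-adjacent to each of $v_1, \dots, v_{i-1}$, so $v_1, \dots, v_{i+1}$ is still an induced path and the invariants are preserved.

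Termination is immediate: the chain $B_1 \supsetneq B_2 \supsetneq \cdots$ is strictly decreasing inside the finite set $V(G)$, so the construction must end, at which point $N[P]$ is by definition a $\mathsf{w}$-balanced separator of $G$.

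The main technical obstacle is packaging the invariants so that one single choice of $v_{i+1}$ simultaneously delivers all three requirements: adjacency to $v_i$, non-adjacency to (and distinctness from) all earlier path vertices, and a neighbor in the new heavy component $B_i$ so that the induction can continue. The key idea is to constrain $v_{i+1}$ to lie inside $B_{i-1}$, which automatically takes care of the non-adjacency condition, while the connectedness of $B_{i-1}$ combined with the inductive fact that the previous vertex $v_i$ has a neighbor in $B_{i-1}$ yields the remaining two conditions by forcing the triple intersection $N(B_i) \cap B_{i-1} \cap N(v_i)$ to be non-empty.
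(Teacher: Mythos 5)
Your proof is correct and is essentially the Gy\'{a}rf\'{a}s path argument the paper cites (it gives no proof of its own, deferring to the reference), iteratively extending an induced path while a strictly shrinking heavy component persists. One small imprecision worth tightening: your justification for non-emptiness of $N(B_i)\cap B_{i-1}\cap N(v_i)$ invokes ``$v_i\notin B_{i-1}$ because $v_i\in N(v_{i-1})$'', which presupposes $i\geq 2$; for the base step $i=1$ one has $v_1\in B_0=V(G)$, but $v_1$ is isolated in $B_0-N(v_1)$, so $B_1$ is still a component of $B_0-N(v_1)$ and the same connectivity argument goes through.
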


Note that \zcref{lem:modifyBalancedSep,lem:GyarfasPath} are already sufficient to prove a simpler case, where instead of excluding a wheel as an induced minor, 
we exclude a \emph{fan}, that is, a graph obtained from adding a universal vertex to a path. 
We include a proof of this simple argument in the appendix of this paper, see \zcref{thm:fanfree}. 

Using these two lemmas, we extend the argument to find a balanced separator that is dominated by either at most two vertices, or by an induced cycle and one other vertex. 
Note that this then also proves the simpler case where instead of excluding a wheel as an induced minor we exclude a cycle as an induced minor. 

\begin{lemma}
    \label{lem:Gyarfas(Cycle+Vertex)}
    For every graph~$G$ and weighting~$\mathsf{w}$ of~$G$ there exists either 
    \begin{itemize}
        \item an induced cycle~$C$ in~$G$ of length at least~$4$ and a vertex~${p \in V(G - C)}$ such that~$N[C \cup \{p\}]$ is a $\mathsf{w}$-balanced separator of~$G$, or
        \item vertices~${p,q \in V(G)}$ such that~$N[\{p,q\}]$ is a $\mathsf{w}$-balanced separator of~$G$. 
    \end{itemize}
\end{lemma}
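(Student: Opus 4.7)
The plan is to apply a Gy\'arf\'as-style path argument and then close the resulting path into an induced cycle of length at least~$4$. Without loss of generality assume $\mathsf{w}$ is normal (if $\mathsf{w}$ is trivial the second bullet holds trivially). By \zcref{lem:GyarfasPath} pick an induced path $P = v_1 v_2 \cdots v_k$ such that $N[P]$ is a $\mathsf{w}$-balanced separator, chosen with $k$ minimum. If $k \le 2$, the second bullet holds at once with $p := v_1$ and $q := v_k$, since any superset of a balanced separator is again a balanced separator. So assume $k \ge 3$.

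Set $T^- := N[\{v_1, \ldots, v_{k-1}\}]$ and $T^+ := N[\{v_2, \ldots, v_k\}]$. Minimality of $P$ forces neither $T^-$ nor $T^+$ to be a $\mathsf{w}$-balanced separator, so each admits a unique heavy component, call them $B^-$ and $B^+$ respectively. Using that $N[P] = T^- \cup N[v_k]$ is a balanced separator while $T^-$ is not, I argue that $B^-$ must contain a neighbor $u^-$ of $v_k$; since $B^- \cap T^- = \emptyset$, this $u^-$ is automatically non-adjacent to $v_1, \ldots, v_{k-1}$. Symmetrically I find $u^+ \in B^+$ adjacent to $v_1$ but to none of $v_2, \ldots, v_k$. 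Because $\mathsf{w}(B^+)$ and $\mathsf{w}(B^-)$ both exceed $1/2$, the two heavy components share a vertex $w$. Inside $G' := G[\{v_1, v_k\} \cup B^+ \cup B^-]$, the connectivity of $B^+$ and $B^-$ provides a $(v_1, v_k)$-walk via $u^+$, $w$, $u^-$; let $R$ be a shortest $(v_1, v_k)$-path in $G'$, which is automatically induced in $G'$, and hence in $G$ (as $G'$ is an induced subgraph).

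I next verify that $C := P \cup R$ is an induced cycle of length at least~$4$. Every internal vertex of $R$ lies in $B^+ \cup B^-$, and by the very definitions of $T^\pm$ such a vertex is non-adjacent to each of $v_2, \ldots, v_{k-1}$. Together with $P$ being induced, $R$ being induced, and $v_1 v_k \notin E(G)$ (because $P$ is induced with $k \ge 3$), this rules out every possible chord, so $G[V(C)] = C$. Its length equals $(k-1) + |E(R)| \ge k + 1 \ge 4$, since $R$ has at least two edges. Moreover $N[C] \supseteq N[P]$, so $N[C]$ is a $\mathsf{w}$-balanced separator.

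To finish, if $V(G) \setminus V(C) \neq \emptyset$, pick any $p$ in this set; then the first bullet holds with this cycle $C$ and this vertex $p$. Otherwise $G$ equals the induced cycle $C$, in which case applying \zcref{obs:TreeBalancedSep} to the tree $G - v$ for any $v \in V(G)$ yields a vertex $q$ such that $\{v, q\}$, and hence also $N[\{v, q\}]$, is a $\mathsf{w}$-balanced separator, giving the second bullet with $p := v$. I expect the main obstacle to be verifying that $C$ is induced: this depends crucially on having extracted the heavy components with respect to the closed neighborhoods $T^\pm$ rather than $V(P^\pm)$ alone, which is exactly what guarantees that no vertex of $B^+ \cup B^-$ has a neighbor among $v_2, \ldots, v_{k-1}$.
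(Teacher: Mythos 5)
Your proof is correct, and it takes a genuinely different route from the one in the paper. Both arguments start from the same place: a minimal Gy\'arf\'as path $P = v_1 \cdots v_k$ with $N[P]$ a balanced separator, reducing at once to $k \geq 3$. From there the paper's proof considers a \emph{single} heavy component $B$ of $G - N[P - v_k]$, shrinks $P - v_k$ to a $\subseteq$-minimal subpath $Q = p_a \cdots p_b$ with $N(B) \subseteq N(Q)$, and uses this minimality to extract two attachments $x$ (of $p_a$) and $y$ (of $p_b$) in $N(B)$ which it then joins by a shortest path inside $B$; the extra vertex is always $p = v_k$, the last vertex of the Gy\'arf\'as path. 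You instead work symmetrically with \emph{two} heavy components $B^-$ and $B^+$, arising from deleting $N[\{v_1,\dots,v_{k-1}\}]$ and $N[\{v_2,\dots,v_k\}]$ respectively; since each has weight exceeding $1/2$, they must share a vertex, which immediately gives a $(v_1,v_k)$-path $R$ with interior in $B^+ \cup B^-$, and closing $P$ with a shortest such $R$ gives the cycle $C$. The induced-ness of $C$ falls out of the definition of the closed neighbourhoods, exactly as you observe: no vertex of $B^+$ is adjacent to any of $v_2,\dots,v_k$ and no vertex of $B^-$ is adjacent to any of $v_1,\dots,v_{k-1}$, so there are no chords between interior vertices of $P$ and interior vertices of $R$, and $v_1v_k \notin E(G)$ because $P$ is induced with $k\geq 3$. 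Your route avoids the subpath-minimization step entirely and in fact proves something mildly stronger, namely that $N[C]$ on its own is already a balanced separator, so the vertex $p$ is vestigial and may be picked arbitrarily outside $C$. That is precisely what forces you to handle the small extra case $V(G)=V(C)$, which you resolve correctly with \zcref{obs:TreeBalancedSep}; the paper's proof sidesteps this case by construction, since it keeps $p = v_k$ in a distinguished role.
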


\begin{proof}
    Without loss of generality, we assume~$\mathsf{w}$ is normal. 
    
    Let~$P = p_1 p_2 \dots p_k$ be a minimal path as in \zcref{lem:GyarfasPath} and let~${p \coloneqq p_k}$. 
    If~${k = 1}$, then setting~${q \coloneq p_k}$ yields that~${N[P] = N[\{p,q\}]}$ is a $\mathsf{w}$-balanced separator of~$G$ by \zcref{lem:GyarfasPath}, so we may assume~${k > 1}$. 
    By the minimality of~$P$ there is a unique heavy component~$B$ of~${G - N[P - p]}$. 
    Applying \zcref{lem:modifyBalancedSep} with~${X = Z = N[P - p]}$ and~${Y = N[p]}$, we infer that the set ${S \coloneqq N(B) \cup N[p]}$ is a $\mathsf{w}$-balanced separator of~$G$. 
    
    Let~${Q = p_{a} p_{a+1} \dots p_{b}}$ be a minimal subpath of~${P - p}$ such that~${N(B) \subseteq N(Q)}$. 
    If~${a = b}$, then by setting~${q \coloneq p_a}$, we observe that~${N[\{p,q\}]}$ is a $\mathsf{w}$-balanced separator of~$G$ since ${S \subseteq N[\{p,q\}]}$, so we may assume~${p_a \neq p_b}$. 
    By the minimality of~$Q$, we observe that~$p_a$ has a neighbour~$x$ in~${N(B)}$ that is not a neighbour of any~$p_i$ for~${a < i \leq b}$ and that~$p_b$ has a neighbour~$y$ in~${N(B)}$ that is not a neighbour of any~$p_i$ for~${a \leq i < b}$. 
    Let~$R$ be a shortest path between~$x$ and~$y$ in~${G[B \cup \{x,y\}]}$. 
    Now the cycle~${C \coloneqq Q \cup R}$ is induced and~${S \subseteq N[C \cup \{p\}]}$, so we have that~${N[C \cup \{p\}]}$ is the desired $\mathsf{w}$-balanced separator of~$G$. 
\end{proof}

\subsection{The main theorem}
We prove a slightly more general statement than \zcref{thm:main}, by using weighted graphs and giving an explicit bound on the number of vertices that dominate the balanced separator. 

\begin{theorem}[store=mainthmweighted]
    \label{thm:mainWeighted}
    For every integer~${\ell \geq 3}$ and every $W_\ell$-induced-minor-free graph~$G$ with weighting~$\mathsf{w}$ of~$G$ there is a $\mathsf{w}$-balanced separator of~$G$ that either is dominated by at most~$\ell$ vertices or has size at most~${(\ell-1)^2}$. 
\end{theorem}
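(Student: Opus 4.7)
My plan is to combine Lemma~\ref{lem:Gyarfas(Cycle+Vertex)} with a case analysis based on the length of the induced cycle it produces, and to reduce the difficult case to the ``main lemma'' that the introduction of this section announces will be stated and proved subsequently.

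First, I would apply Lemma~\ref{lem:Gyarfas(Cycle+Vertex)} to $G$ and $\mathsf{w}$. If we land in the second alternative, obtaining vertices $p, q \in V(G)$ with $N[\{p,q\}]$ a $\mathsf{w}$-balanced separator, we are immediately done: this separator is dominated by the set $\{p, q\}$, whose size is $2 \leq \ell$. So assume instead that we are in the first alternative, obtaining an induced cycle $C$ of length at least $4$ together with a vertex $p \in V(G - C)$ such that $N[C \cup \{p\}]$ is a $\mathsf{w}$-balanced separator of $G$.

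The next split is on the length of $C$. If $|V(C)| \leq \ell - 1$, then $V(C) \cup \{p\}$ has at most $\ell$ vertices and dominates the $\mathsf{w}$-balanced separator $N[C \cup \{p\}]$, and we are done with the first option in the conclusion. Otherwise, $|V(C)| \geq \ell$, and Observation~\ref{obs:WheelFromLargeNeighbourhood} together with the hypothesis that $G$ is $W_\ell$-induced-minor-free forces every component of $G - V(C)$ to have at most $\ell - 1$ neighbours on $C$. This is precisely the structural scenario I expect the main lemma to address, producing a $\mathsf{w}$-balanced separator of size at most $(\ell - 1)^2$ from these ingredients.

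The main obstacle, of course, is the main lemma itself, whose proof is deferred to Sections~\ref{sec:cobwebs} and~\ref{sec:mainlemma} and relies on the cobweb machinery. The quadratic bound $(\ell-1)^2$ suggests that its argument will produce the separator as a union of at most $\ell - 1$ pieces — presumably ``segments'' along the long cycle $C$ obtained from the wedge/segment selection suggested by the preamble macros \texttt{wedgeselection} and \texttt{segmentselection} — each of size at most $\ell - 1$, with both bounds being enforced by the prohibition of $W_\ell$ as an induced minor (directly via Observation~\ref{obs:WheelFromLargeNeighbourhood} for the per-component bound, and more subtly for the number of pieces). Everything before the invocation of the main lemma, by contrast, is a short case distinction that merely packages the output of Lemma~\ref{lem:Gyarfas(Cycle+Vertex)} appropriately.
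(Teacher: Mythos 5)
Your high-level plan --- invoke Lemma~\ref{lem:Gyarfas(Cycle+Vertex)}, dispense with the $N[\{p,q\}]$ alternative, and then reduce the cycle-plus-vertex alternative to a main lemma --- matches the paper's architecture, and your observation that $|V(C)| \leq \ell - 1$ already gives a dominating set of size at most $\ell$ is correct (if not strictly needed). But there are three genuine gaps between what you wrote and what the reduction actually requires.

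First, your case split is on $|V(C)|$, but the main lemma (\zcref{lem:NoBigComponents}) has no hypothesis of the form $|V(C)| \geq \ell$. What it does require is that $V(G) \setminus V(C)$ be an \emph{independent set} whose vertices each carry weight at most $\nicefrac{1}{2}$. The scenario you hand it --- ``every component of $G - V(C)$ has at most $\ell-1$ neighbours on $C$'' --- is not of this form. The missing step is a \emph{contraction}: if no component of $G - V(C)$ is heavy, contract each such component to a single vertex, transfer its total weight to that vertex, and observe that the resulting graph is still $W_\ell$-induced-minor-free, still connected, still has $C$ as an induced cycle, and now has the independence and weight hypotheses the lemma needs; any separator in the contracted graph that lies inside $V(C)$ pulls back to one in $G$. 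Without this, your appeal to the main lemma does not go through.

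Second, the case where some component $B$ of $G - V(C)$ \emph{is} heavy cannot be handled by the main lemma at all (the contracted vertex would have weight $> \nicefrac{1}{2}$). The paper treats this separately: applying \zcref{lem:modifyBalancedSep} with $X = N[C]$, $Y = N[p]$, $Z = V(C)$ shows that $N[N(B) \cup \{p\}]$ is a $\mathsf{w}$-balanced separator, and \zcref{obs:WheelFromLargeNeighbourhood} bounds $|N(B)| < \ell$, giving a dominating set of size at most $\ell$. Your proposal never distinguishes whether $G - V(C)$ has a heavy component, so this branch is unaddressed.

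Third, the main lemma requires $\ell \geq 4$, so $\ell = 3$ needs a base case. For $\ell = 3$, $W_3 = K_4$, so $G$ is $K_4$-minor-free, hence has treewidth at most $2$ by \zcref{lem:SeriesParallelTW}, and \zcref{cor:TreeWidthBalancedSep} yields a balanced separator of size $3 \leq (\ell-1)^2$. Your proposal offers no escape for $\ell = 3$, and the $|V(C)| \leq \ell - 1 = 2$ branch is vacuous there since $C$ has length at least $4$. (You also implicitly need $G$ connected for the main lemma, which the paper secures by passing to the unique heavy component of $G$ first --- a minor but necessary reduction.)
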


To prove this theorem, we first find a balanced separator of the form~$N[C \cup \{p\}]$ or~$N[\{p,q\}]$ as in \zcref{lem:Gyarfas(Cycle+Vertex)}. 
In the latter case, we are done. 
In the former case, if some component~$B$ of~${G - V(C)}$ is heavy, then using \zcref{lem:modifyBalancedSep}, we find that~${N[N(B) \cup \{p\}]}$ is a balanced separator, which is dominated by at most~$\ell$ vertices by \zcref{obs:WheelFromLargeNeighbourhood}. 
If no component of~${G - V(C)}$ is heavy, we contract each component to a single vertex and modify the weighting accordingly. 
For this case, we will use the following lemma. 

\begin{lemma}[store=mainlemma]
    Let~${\ell \geq 4}$ be an integer, let~$G$ be a connected $W_\ell$-induced-minor-free graph, and let~$\mathsf{w}$ be a normal weighting of~$G$. 
    Let~$C$ be an induced cycle in~$G$ of length at least~$4$ such that~${J \coloneqq V(G) \setminus V(C)}$ is independent. 
    If~${\mathsf{w}(v) \leq \nicefrac{1}{2}}$ for all~${v \in J}$, then there is a $\mathsf{w}$-balanced separator~${S \subseteq V(C)}$ of~$G$ of size at most~${(\ell-1)^2}$. 
    \label{lem:NoBigComponents}
\end{lemma}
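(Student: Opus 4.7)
The plan is an iterative construction of $S$, carried out in at most $\ell - 1$ rounds, each adding at most $\ell - 1$ vertices of $V(C)$ to $S$; this yields $|S| \leq (\ell-1)^2$.

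I would begin with two structural observations. Since $G$ is $W_\ell$-induced-minor-free and $J$ is independent, every $v \in J$ is a single-vertex component of $G - V(C)$, so \zcref{obs:WheelFromLargeNeighbourhood} forces $|N_G(v) \cap V(C)| \leq \ell - 1$. Moreover, $V(C)$ itself is already a $\mathsf{w}$-balanced separator of $G$: the components of $G - V(C)$ are precisely the isolated vertices of $J$, each of weight at most $1/2$. The task is therefore to shrink the candidate separator $V(C)$ down to size $(\ell-1)^2$ while keeping it balanced.

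I would maintain a nested family $S_0 \subseteq S_1 \subseteq \cdots \subseteq V(C)$ with $|S_i| \leq i(\ell - 1)$, starting from $S_0 = \emptyset$. At round $i$, if $S_i$ is a $\mathsf{w}$-balanced separator we are done; otherwise let $B_i$ be the unique heavy component of $G - S_i$. The intersection $B_i \cap V(C)$ decomposes into arcs of $C - S_i$ that are linked by $J$-vertices in $B_i$. I would select a ``critical'' $J$-vertex $v \in V(B_i) \cap J$ and set $S_{i+1} \coloneqq S_i \cup (N_G(v) \cap V(C))$, adding at most $\ell - 1$ vertices. Applying \zcref{lem:modifyBalancedSep} then relates the new heavy component to $B_i$, and the aim is to show that a well-chosen complexity measure of $B_i$---for instance, the number of arcs it covers, or the length of a path through them---strictly drops at each round. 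After at most $\ell - 1$ rounds the invariant would force the heavy component to vanish, and the resulting $S$ is balanced.

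The main obstacle is precisely the right choice of ``critical'' vertex and the matching complexity measure: a naive arc count does not work, because arcs severed by removing $N_G(v) \cap V(C)$ may reattach via other $J$-vertices in $B_{i+1}$. This is where I expect the cobweb machinery of \zcref{sec:cobwebs} to come in: cobwebs presumably encode the minimal ``tangled'' configurations of arcs and $J$-linkings whose presence would exhibit a $W_\ell$ induced minor, so that their absence in $G$ forces the complexity measure to drop at each round. Without such a structural input, the iteration can stall in configurations where every candidate addition fails to simplify the heavy component.
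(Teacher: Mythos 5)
Your proposal is not a proof but a framework with an acknowledged hole, and the hole is exactly where the substance lies. You correctly note the two starting observations (degree bound from \zcref{obs:WheelFromLargeNeighbourhood}, and that $V(C)$ is trivially a balanced separator), but your iterative round-by-round scheme does not match the paper's argument, and you yourself admit you lack the ``complexity measure'' that would make it terminate. That missing piece is not a technical detail one could hope to backfill; it is the entire content of \zcref{sec:cobwebs}, which you explicitly defer to.

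More importantly, the paper's actual argument is \emph{not} an iterative accretion with a decreasing potential. After reducing to the cobweb case (removing degree-$\leq 1$ vertices of $J$ via \zcref{clm:buildseparator} and eliminating nested neighbourhoods via \zcref{lem:identifySiblings}), the proof proceeds in one shot. For each $v\in I$ it forms $S(v)$, the union of the \emph{barriers} of all wedges anchored at $v$ --- a different and strictly larger object than $N(v)\cap V(C)$, since barriers include vertices of $C$ that are not neighbours of $v$ but that separate the wedge from the rest of the cycle. If no $S(v)$ works, a single heavy component $B(v)$ per vertex picks out a wedge-selection $\wedgeselectionnull$; the heavy components for good triples then pick out a segment-selection $\segmentselectionnull$, whose image is intersecting because distinct heavy sets cannot be disjoint under a normal weighting. \zcref{lem:separatinggoodpair} then produces a single good pair $\{u,v\}$ whose two barriers already separate all of $I$ from the surviving heavy segment, and \zcref{clm:buildseparator} finishes with one extra vertex, giving size $2(\ell-1)+1$. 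The $(\ell-1)^2$ bound in the statement comes from the one-vertex case $S(v)$, not from summing $\ell-1$ rounds of $\ell-1$ additions. So beyond the gap you flag, the shape of the argument you are guessing at (iterative, potential-based) is also not the shape of the proof; the cobweb machinery does not supply a drop in a complexity measure but instead a direct structural choice of two barriers.
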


We will only prove this lemma in \zcref{sec:mainlemma} after introducing the necessary machinery in \zcref{sec:cobwebs}. 
Let us conclude this section with a formal proof of the main theorem, assuming the validity of \zcref{lem:NoBigComponents}. 

\begin{proof}[Proof of \zcref{thm:mainWeighted}]
    Without loss of generality, we assume that~$\mathsf{w}$ is normal. 
    If no component of~$G$ is heavy, there is nothing to show, so we may assume that~$G$ has a (unique) heavy component~$H$. 
    Hence, we may assume that~$G$ is connected, since every $\mathsf{w}_H$-balanced separator of~$H$, where $\mathsf{w}_H$ is the restriction of $\mathsf{w}$ to~$H$, is a $\mathsf{w}$-balanced separator of~$G$. 

    First observe that if~${\ell = 3}$, then~$G$ is $K_4$-minor-free, so by \zcref{lem:SeriesParallelTW} and \zcref{cor:TreeWidthBalancedSep}, $G$ has a $\mathsf{w}$-balanced separator of size~$3$. 
    So we may assume that~${\ell \geq 4}$. 
    
    We apply \zcref{lem:Gyarfas(Cycle+Vertex)}. 
    If we find a $\mathsf{w}$-balanced separator of~$G$ of the form~$N[\{p,q\}]$, then there is nothing else to show. 
    So we may assume we find an induced cycle~$C$ and a vertex~${p \in V(G - V(C))}$ such that~$N[C \cup \{p\}]$ is a $\mathsf{w}$-balanced separator of~$G$. 
    
    First, suppose that there is a heavy component~${B}$ of~${G - V(C)}$. 
    Then, by \zcref{lem:modifyBalancedSep} (with~${X = N[C]}$, ${Y = N[p]}$, and~${Z = C}$), we obtain that the set ${S \coloneqq N[N(B) \cup \{p\}]}$ is a $\mathsf{w}$-balanced separator of~$G$. 
    Since~$G$ is $W_\ell$-induced-minor-free, ${\abs{N(B)} < \ell}$ by \zcref{obs:WheelFromLargeNeighbourhood}, and we have that~$S$ is dominated by at most~$\ell$ vertices. 

    Secondly, suppose that no component of~${G - V(C)}$ is heavy. 
    Let~$G'$ be the graph obtained from~${G}$ by contracting each component of~${G - V(C)}$ to a single vertex. 
    Formally, we consider vertices of~$G'$ to be either vertices of~$C$ or the vertex sets of the components of~${G - V(C)}$. 
    Let~$\mathsf{w}'$ be the normal weighting of~$G'$ defined by 
    \[
        \mathsf{w}'(v) = 
        \begin{cases}
            \mathsf{w}(v) 
                & \text{if } v \in V(C)\,, \\
            \sum_{x \in v} \mathsf{w}(x) 
                & \text{if } G[v] \textnormal{ is a component of } G - V(C) \,.
        \end{cases}
    \]
    Now, $G'$ is connected and~$C$ is an induced cycle in~$G$ such that~${J \coloneqq V(G') \setminus V(C)}$ is independent. 
    Moreover, ${\mathsf{w}'(v) \leq \nicefrac{1}{2}}$ for all~${v \in J}$. 
    Hence, by \zcref{lem:NoBigComponents}, there is a $\mathsf{w}'$\nobreakdash-balanced separator~$S$ of~$G'$ with~${S \subseteq V(C) \subseteq V(G)}$ 
    and~${\abs{S} \leq (\ell -1)^2}$. 
    Now by the definition of~$\mathsf{w}'$, it immediately follows that~$S$ is also a $\mathsf{w}$-balanced separator of~$G$. 
\end{proof}

\section{Cobwebs}
\label{sec:cobwebs}

In this section, we introduce concepts that will be useful to prove \zcref{lem:NoBigComponents} in the subsequent section. 

We say a graph~$G$ is a \emph{cobweb} if 
\begin{itemize}
    \item there is an induced cycle~$C$ in~$G$ such that~${I \coloneqq V(G) \setminus V(C)}$ is a non-empty independent set in~$G$, 
    \item $G$ has minimum degree at least~$2$, and 
    \item $N(u) \not\subseteq N(v)$ for every pair~${u,v}$ of distinct vertices in~$I$. 
\end{itemize}
We call the pair~$(C,I)$ a \emph{presentation} of~$G$. 
See \zcref{fig:CobwebsAndBasicHoles}(a) for an example. 

\begin{figure}[htbp]
    \centering
    \includegraphics[scale=0.5]{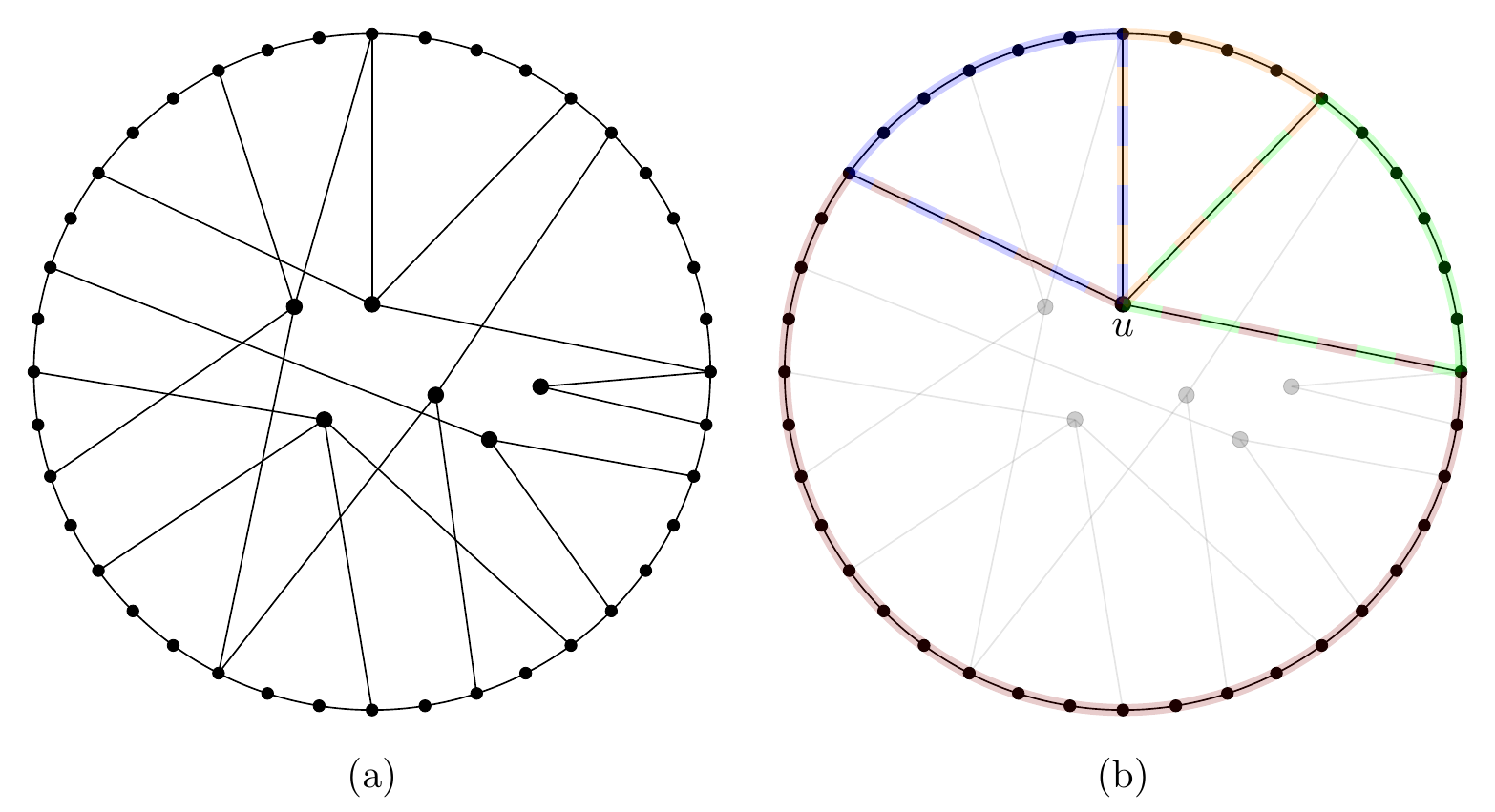}\\
    \vspace*{-1.5cm}
    \includegraphics[scale=0.5]{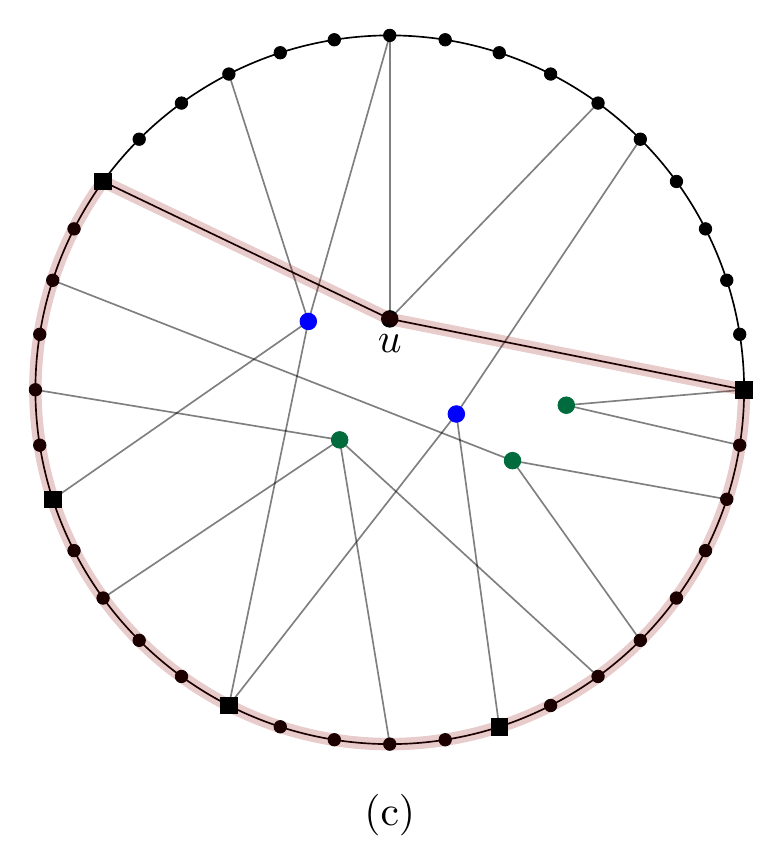}
    \caption{(a) An example of a cobweb. 
    (b) A vertex ${u \in I}$ and the four wedges anchored in~$u$. (c) For one of the wedges~$W$, vertices coloured {\color{cadmiumgreen}\textbf{green}} are the vertices attached to~$W$, vertices coloured  {\color{blue}\textbf{blue}} are the vertices not attached to~$W$, and the square vertices represent the barrier~$\barrier{W}$. }
    \label{fig:CobwebsAndBasicHoles}
\end{figure}

Let~$G$ be a cobweb with presentation~${(C,I)}$. 
Let us call a (not necessarily induced) cycle~$W$ in~$G$ a \emph{wedge} of~${(G,C,I)}$ if~$W$ contains exactly one vertex~$v$ of~$I$, together with exactly two vertices of~${N_G(v)}$. 
We say~$W$ is \emph{anchored} in~$v$ and~$v$ is the \emph{anchor} of~$W$. 
So in particular, ${W-v}$ is a path in~$G$ that is contained in~$C$ and~${N(v) \cap V(W-v)}$ is exactly the set of endpoints of the path~${W-v}$. 
Moreover, if~$W$ has length at least~$4$, then~${W-v}$ is a path of length at least~$2$, and if~$W$ has length~$3$, then~${W - v}$ is a single edge. 
In the latter case, we say~$W$ is \emph{trivial}. 
Note that since~$G$ has minimum degree at least~$2$, every vertex~$v$ is the anchor of exactly~$\deg(v)$ many wedges, see also \zcref{fig:CobwebsAndBasicHoles}(b). 
If~${V(C) \subseteq V(C)}$, we say that~$W$ is \emph{co-trivial}. 
This is the case when the anchor~$v$ of~$W$ has degree exactly~$2$ and the unique other wedge anchored in~$v$ is trivial. 
Note that every non-co-trivial wedge is an induced subgraph of~$G$, and the only edge that prevents a co-trivial wedge from being an induced subgraph is the unique edge of~$C$ not contained in the wedge. 

As observed above, the intersection of a wedge~$W$ with~$C$ is a path. 
So, in particular, if~$W$ is not co-trivial, there is a unique component of~${G - V(W)}$ that contains vertices of~$C$. 
We define the \emph{barrier} ${\barrier{W}}$ of a non-co-trivial wedge~$W$ to be the set of neighbours~$C$ of the unique component of~${G - V(W)}$ that contains vertices of~$C$. 
Observe that the barrier of $W$ consists precisely of the endpoints of the path~${W \cap C}$, along with the internal vertices of this path that have a common neighbour in~$I$ with some vertex in~${V(C) \setminus V(W)}$. 
In particular, the anchor of $W$ does not belong to the barrier. 

For a co-trivial wedge~$W$, we define the \emph{barrier} ${\barrier{W}}$ of~$W$ to be the two neighbours of the anchor of~$W$. 
We say a vertex~${u \in I \setminus V(W)}$ \emph{attaches to} a wedge~${W}$ if~${N(u) \subseteq V(W)}$. 
Observe that if~$W$ is a trivial wedge, then no vertex in~$I$ attaches to~$W$, and if~$W$ is co-trivial, then every vertex in~$I$ except the anchor of~$W$ attaches to~$W$. 
See \zcref{fig:CobwebsAndBasicHoles}(c) for an example of attaching and the barrier. 

\begin{observation}
    \label{obs:CobwebAttaches}
    Let~$G$ be a cobweb with presentation~$(C,I)$, let~$W$ be a wedge of~$(G,C,I)$, and let~${u \in I}$ such that~$u$ does not attach to~$W$.
    Then, the barrier~$\barrier{W}$ separates~$u$ from~${W \cap C}$. 
\end{observation}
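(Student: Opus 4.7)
The plan starts by dispatching two easy sub-cases. First, if $u = v$ is the anchor, any path starting at $v$ has its second vertex in $V(C)$ (since $I$ is independent), so a $(u, V(W \cap C))$-path has length one and ends at a $v$-neighbour in $V(W \cap C)$; such a neighbour must be one of the two endpoints of the arc $W \cap C$ (the consecutive $v$-neighbours that delimit $W$), and these lie in $\barrier{W}$. Second, if $W$ is co-trivial, then by the remark made just above the observation every $u \in I \setminus \{v\}$ attaches to $W$, so combined with the first sub-case there is nothing more to prove.

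So I would be left with $W$ non-co-trivial and $u \neq v$. Here I would work directly from the definition $\barrier{W} = N(D) \cap V(C)$, where $D$ is the unique component of $G - V(W)$ meeting $V(C)$. First I would argue $u \in D$: since $u$ does not attach and $I$ is independent, $u$ has a neighbour in $V(C) \setminus V(W) \subseteq D$, and since $u \notin V(W)$, this places $u$ itself in $D$.

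Now given any $(u, V(W \cap C))$-path $P = c_0 c_1 \cdots c_m$ I would take the largest index $i$ with $c_i \in D$. Since $c_m \in V(W)$ one has $i < m$, and $c_{i+1} \in N(D) \subseteq V(W)$. If $c_{i+1} = c_m$, then $c_m \in N(D) \cap V(C) = \barrier{W}$ and we are done. Otherwise $c_{i+1}$ is an internal vertex of $P$, so $c_{i+1} \notin V(W \cap C)$, forcing $c_{i+1} = v$; then $c_{i+2}$ is a neighbour of $v$, hence on $C$, cannot be in $D$ by maximality of $i$, and therefore lies in $V(W \cap C)$. The internal-vertex constraint forces $c_{i+2} = c_m$, and since $c_m$ is a $v$-neighbour inside the arc $W \cap C$ it must be one of the two endpoints of that arc (the only $v$-neighbours lying in the arc), both of which belong to $\barrier{W}$.

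The main obstacle I expect is the ``anchor detour'' sub-case $c_{i+1} = v$: a priori the path could try to escape $D$ via $v$ and re-enter $V(W \cap C)$ at a non-barrier interior vertex of the arc, but the consecutiveness of the two $v$-neighbours delimiting $W$ precisely forbids this.
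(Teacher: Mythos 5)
Your main argument (the case $u \neq v$, $W$ non-co-trivial) is correct and closely parallels the paper's own proof: both reduce to the facts that $u$ lies in the component $D$ and that $\barrier{W} = N(D) \cap V(C)$ blocks re-entry into $W \cap C$; you simply trace the path explicitly rather than invoking ``$\barrier{W}$ separates $V(W\cap C)$ from $V(D)$'' as a one-liner. The ``anchor detour'' $c_{i+1}=v$ that you worried about is also handled correctly.

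However, your Case 1 contains a genuine error. The claim ``a $(u, V(W \cap C))$-path has length one'' when $u=v$ is the anchor only holds if $W$ is co-trivial (so that $V(W\cap C) = V(C)$). For a non-co-trivial wedge $W$, the second vertex $c_1 \in N(v) \subseteq V(C)$ may lie in $V(C)\setminus V(W) \subseteq D$, and the path can then wander through $D$ for arbitrarily many steps before returning to $W \cap C$, so it need not have length one. Since you explicitly restrict your main last-exit argument to $u \neq v$, the combination ``$u$ is the anchor and $W$ is non-co-trivial'' — which the paper treats as a live case — is left unproven. The fix is cheap and uses only machinery you already have: if such a path has length one it ends at one of the two $v$-neighbours delimiting $W$, both of which are endpoints of $W\cap C$ and hence in $\barrier{W}$; otherwise $c_1 \in D$, take the largest $i$ with $c_i \in D$, and now $c_{i+1}=v$ is impossible because $v = c_0$ already lies on the path, forcing $c_{i+1} = c_m \in N(D)\cap V(C) = \barrier{W}$. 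As written, though, the proof has a hole in that sub-case.
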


\begin{proof} 
    If~$W$ is trivial, then~$W \cap C$ is contained in~$\barrier{W}$, so there is nothing to show. 
    If~$u$ is the anchor of~$W$ (in particular if~$W$ of co-trivial), then the claim is true since both endpoints of the path~${W \cap C}$ are contained in $\barrier{W}$. 
    Otherwise, let~$D$ denote the unique component of~${G - V(W)}$ that contains vertices of~$C$. 
    By definition, $\barrier{W}$ separates~${V(W \cap C)}$ from~$V(D)$. 
    Since~${u}$ does not attach to~$W$, by definition~$u$ has a neighbour in~${V(C) \setminus V(W)}$ and hence~${u \in V(D)}$. 
\end{proof}

We note that the converse of \zcref{obs:CobwebAttaches} does not hold, i.e.~${\barrier{W}}$ may separate~$u$ from~${V(W \cap C)}$ even though~$u$ is attached to~$W$  
(indeed, this is the case whenever ${N(u)\subseteq \barrier{W}}$).

\medskip

We call a function~${\wedgeselectionnull}$ mapping each~${v \in I}$ to a wedge~${\wedgeselection{v}}$ of~${(G,C,I)}$ anchored in~$v$ for each~${v \in I}$ a \emph{wedge-selection for $(G,C,I)$}. 
Slightly abusing the notation for the barrier, we write~$\barrier{v}$ for the barrier~$\barrier{\wedgeselection{v}}$. 

Given a wedge-selection~$\wedgeselectionnull$ for~$(G,C,I)$ and~${u, v \in I}$, 
we say that~${\wedgeselection{u}}$ \emph{improves} ${\wedgeselection{v}}$ if~${\wedgeselection{u} - u}$ is a proper subgraph of~${\wedgeselection{v} - v}$. 
For~${u,v \in I}$, we write~${u \prec v}$ if~${\wedgeselection{u}}$ improves~${\wedgeselection{v}}$. 
Moreover, we write~${u \preceq v}$ if either~${u \prec v}$ or~${u = v}$. 
This defines a partial order, and we call the minimal elements of this partial order \emph{good} (with respect to~$\wedgeselectionnull$). 
Clearly, good elements exist. 

\begin{lemma}
    \label{obs:AttachPairOrImprove}
    Let~$G$ be a cobweb with presentation~${(C,I)}$ and let~$\wedgeselectionnull$ be a wedge-selection for~${(G,C,I)}$. 
    Let~${u,v \in I}$ be distinct. 
    If~$u$ attaches to~$\wedgeselection{v}$, then either~${\wedgeselection{u}}$ improves ${\wedgeselection{v}}$, or $v$ attaches to~${\wedgeselection{u}}$ as well, but not both. 
\end{lemma}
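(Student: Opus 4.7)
The plan is to analyse the two subpaths $P := \wedgeselection{v} - v$ and $P' := \wedgeselection{u} - u$ of $C$ that the wedges carve out. Let $a, b$ be the endpoints of $P$; the defining property of a wedge, that $\wedgeselection{v}$ contains exactly two vertices of $N(v)$, forbids any internal vertex of $P$ from lying in $N(v)$. Hence $a, b$ are cyclically consecutive neighbours of $v$ along $C$, and every further neighbour of $v$ lies on the complementary arc $P^{\ast} := C - (V(P) \setminus \{a, b\})$. Since $u$ attaches to $\wedgeselection{v}$ and $uv \notin E(G)$ (because $I$ is independent), we obtain $N(u) \subseteq V(P)$; in particular, both endpoints of $P'$ belong to $V(P)$. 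There are exactly two subpaths of $C$ joining these two endpoints, which suggests a case split on which of them is $P'$.

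Case A (inner arc): $P' \subseteq P$. If the inclusion is strict, then $\wedgeselection{u}$ improves $\wedgeselection{v}$ by definition, and we are done. Otherwise $P' = P$, so the endpoints of $P'$ are $a$ and $b$, which then lie in $N(u)$. The wedge condition applied to $\wedgeselection{u}$ also forbids any further vertex of $P'$ from being a neighbour of $u$, and combined with $N(u) \subseteq V(P) = V(P')$ this forces $N(u) = \{a, b\} \subseteq N(v)$, contradicting the cobweb axiom $N(u) \not\subseteq N(v)$.

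Case B (outer arc): $P'$ traverses the other arc of $C$ between its endpoints, and this arc contains $P^{\ast}$ entirely. Hence $a, b \in V(P')$, and every neighbour of $v$ distinct from $a, b$ lies in $V(P^{\ast}) \subseteq V(P')$. Consequently $N(v) \subseteq V(P') \subseteq V(\wedgeselection{u})$, so $v$ attaches to $\wedgeselection{u}$.

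For mutual exclusivity, suppose both conclusions held simultaneously: then $P' \subsetneq P$ and $N(v) \subseteq V(P')$, so both endpoints $a, b$ of $P$ would lie in a proper subpath $P'$ of $P$, which is impossible. The main obstacle anticipated in this argument is the boundary configuration $P' = P$ in Case A, and it is exactly here that the cobweb condition $N(u) \not\subseteq N(v)$ is indispensable; the rest is a clean combinatorial split on the two arcs of $C$ determined by the endpoints of $P'$.
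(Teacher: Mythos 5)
Your proof is correct and takes essentially the same approach as the paper's: both arguments analyse the position of the path $P' = \wedgeselection{u} - u$ relative to $P = \wedgeselection{v} - v$ on the cycle $C$, with your Case A ($P' = P$ or $P' \subsetneq P$) and Case B (outer arc through $P^{\ast}$) matching the paper's three subcases, and your mutual-exclusivity argument matching the paper's forward direction of the iff. Your organisation as a clean two-arc split is a touch tidier and makes the exhaustiveness of the case analysis transparent; the only small thing left implicit is that $\wedgeselection{v}$ must be non-trivial (which follows from $u$ attaching together with the cobweb axiom and minimum degree $2$, and which the paper notes explicitly), but that assumption is harmlessly buried in your derivation $N(u) \subseteq V(P)$.
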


\begin{proof}
    Suppose~$u$ attaches to~$\wedgeselection{v}$, so in particular, $\wedgeselection{v}$ is non-trivial. 
    We show that~${\wedgeselection{u}}$ improves ${\wedgeselection{v}}$ if and only if~$v$ does not attach to~$\wedgeselection{u}$. 
    
    First, suppose that~$\wedgeselection{u}$ improves~$\wedgeselection{v}$, so~${\wedgeselection{u}-u}$ is a proper subgraph of~${\wedgeselection{v} - v}$. 
    Then, in particular, one of the endpoints of the path~${\wedgeselection{v} - v}$ is not contained in~${\wedgeselection{u}-u}$, so~${N(v) \not\subseteq V(\wedgeselection{u})}$. 
    
    If on the other hand~$\wedgeselection{u}$ does not improve~$\wedgeselection{v}$, then either~${\wedgeselection{u} - u = \wedgeselection{v} - v}$ or~${\wedgeselection{u}-u}$ contains an internal vertex from the path~${Q}$ obtained by deleting from~$C$ the internal vertices of~${\wedgeselection{v} - v}$. 
    If ${\wedgeselection{u} - u = \wedgeselection{v} - v}$, then, since~$N(u) \subseteq V(\wedgeselection{v})$, we have that~$N(u) \subseteq N(v)$, a contradiction to the fact that~$G$ is a cobweb with presentation~${(C,I)}$.
    If ${\wedgeselection{u}-u}$ contains an internal vertex of~$Q$, then, since $u$ attaches to~$\wedgeselection{v}$, the only wedge anchored in~$u$ that contains an internal vertex of~$Q$ contains the whole path~$Q$, hence this wedge has to be $\wedgeselection{u}$, implying that~$v$ attaches to~$\wedgeselection{u}$. 
\end{proof}

Now let us analyse the structure we have looking at two vertices that pairwise attach to the other's selected wedges, see \zcref{fig:MoreCobwebs}(a) for a reference. 

\begin{figure}[htbp]
    \centering
    \includegraphics[scale=0.5]{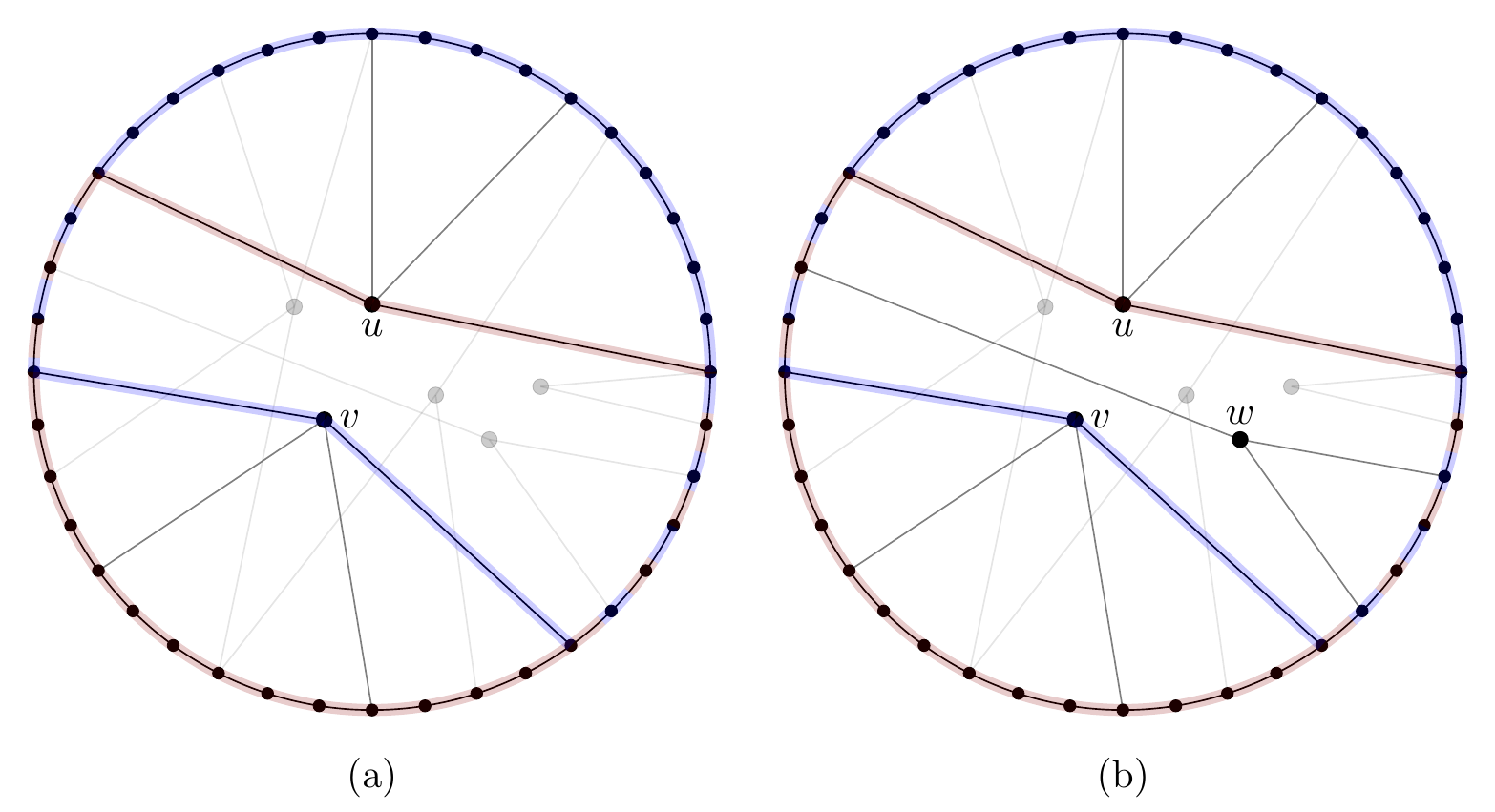}
    \caption{(a) A cobweb with a wedge $\wedgeselection{u}$ (in red) and a wedge $\wedgeselection{v}$ (in blue). Here, $u$ attaches to~$\wedgeselection{v}$ and~$v$ attaches to~$\wedgeselection{u}$. \\
    (b) The vertex~$w$ attaches to both~$\wedgeselection{u}$ and~$\wedgeselection{v}$ and connects both components of~${\wedgeselection{u} \cap \wedgeselection{v}}$. Regardless which wedge anchored in~$w$ is selected as~$\wedgeselection{w}$, it will improve either~$\wedgeselection{u}$ or~$\wedgeselection{w}$ (or both).}
    \label{fig:MoreCobwebs}
\end{figure}

\begin{lemma}
    \label{lem:attachingpair}
    Let~$G$ be a cobweb with presentation~${(C,I)}$ and let~$\wedgeselectionnull$ be a wedge-selection for~${(G,C,I)}$. 
    Then for all~${u, v, w \in I}$, the following statements hold. 
    \begin{enumerate}[label=\textnormal{(\alph*)}]
        \item\label{itema} The graph ${\wedgeselection{u} \cap \wedgeselection{v}}$ has at most two components. 
        \item\label{itemb} The graph ${\wedgeselection{u} \cap \wedgeselection{v}}$ has exactly two components, if and only if~$u$ attaches to~$\wedgeselection{v}$ and~$v$ attaches to~$\wedgeselection{u}$.
        \item\label{itemc}
        If ${\wedgeselection{u} \cap \wedgeselection{v}}$ has exactly two components and $w$ attaches to both~$\wedgeselection{u}$ and~$\wedgeselection{v}$, then 
        \begin{itemize}
            \item if~$w$ has a neighbour in each component of~$\wedgeselection{u} \cap \wedgeselection{v}$, then~$\wedgeselection{w}$ improves at least one of~$\wedgeselection{u}$ and~$\wedgeselection{v}$, and
            \item if~$w$ has neighbours in only one component of~$\wedgeselection{u} \cap \wedgeselection{v}$, then~$\wedgeselection{w}$ either improves both~$\wedgeselection{u}$ and~$\wedgeselection{v}$ or none of them. 
        \end{itemize}
    \end{enumerate}
\end{lemma}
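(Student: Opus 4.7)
The plan is to reduce everything to the intersection of two subpaths of the cycle $C$. Since $I$ is independent and $u, v$ are distinct elements of $I$, we have $v \notin V(\Omega_u)$ and $u \notin V(\Omega_v)$, and any edge incident to $u$ lies only in $E(\Omega_u)$, while any edge incident to $v$ lies only in $E(\Omega_v)$. Writing $P_x \coloneqq \Omega_x - x$ for $x \in \{u,v,w\}$ (a subgraph of $C$), this gives the identity $\Omega_u \cap \Omega_v = P_u \cap P_v$. Part (a) follows at once, since the intersection of two arcs on a cycle is a disjoint union of at most two subarcs.

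For part (b), I would first establish the structural fact that $P_u \cap P_v$ has exactly two components precisely when the endpoints interlock: both endpoints $a_1, a_2$ of $P_u$ lie in $V(P_v)$, both endpoints $b_1, b_2$ of $P_v$ lie in $V(P_u)$, and together $P_u$ and $P_v$ cover the cycle (suitably interpreted in the degenerate cases). For the forward implication, this lets me deduce $N(u) \subseteq V(P_v)$: the wedge axiom forces $N(u) \cap V(P_u) = \{a_1, a_2\} \subseteq V(P_v)$, and any remaining neighbour of $u$ lies in $V(C) \setminus V(P_u) \subseteq V(P_v)$; symmetrically for $v$. For the reverse implication, endpoint containment is immediate from attachment, and the cobweb axiom rules out $P_u = P_v$ (equality would force $N(u) = \{a_1,a_2\} = N(v)$, contradicting ${N(u)\not\subseteq N(v)}$). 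A short case analysis on the cyclic order of $a_1, a_2, b_1, b_2$ then yields that $V(P_u) \cup V(P_v) = V(C)$ and hence the intersection splits into two arcs.

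For part (c), I would work with the canonical cyclic decomposition of $C$ into four consecutive arcs $A_1, I_v, A_2, I_u$, where $A_1, A_2$ are the two components of $P_u \cap P_v$ and $I_u = P_u - (A_1\cup A_2)$, $I_v = P_v - (A_1\cup A_2)$. Since $w$ attaches to both $\Omega_u$ and $\Omega_v$, the endpoints $d_1, d_2$ of $P_w$ lie in $V(A_1) \cup V(A_2)$. If $d_1 \in A_1$ and $d_2 \in A_2$, then $P_w$ is one of the two $C$-arcs between them: going through $I_v$ gives $V(P_w) \subseteq V(P_v)$, going through $I_u$ gives $V(P_w) \subseteq V(P_u)$. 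The containment is strict because equality $P_w = P_v$ would force $\{d_1,d_2\} = \{b_1,b_2\}$ and then ${N(w) = \{b_1,b_2\} \subseteq N(v)}$, which contradicts the cobweb axiom (and symmetrically for $\Omega_u$). If instead $d_1, d_2$ both lie in $A_1$, then $P_w$ either stays inside $A_1$ (so $P_w$ is a proper sub-arc of both $P_u$ and $P_v$, improving both) or wraps around the long way through $I_v, A_2, I_u$; the latter is impossible in the first bullet, because any neighbour of $w$ in $A_2$ would then be an internal vertex of $P_w$, violating the wedge condition that $P_w$ contains exactly two neighbours of $w$. This settles the first bullet. For the second bullet, $N(w) \subseteq V(A_1)$ puts both endpoints of $P_w$ in $A_1$, and the same dichotomy applies: $P_w$ stays inside $A_1$ (improves both $\Omega_u$ and $\Omega_v$) or wraps around (then $V(P_w)$ meets both $I_u$ and $I_v$, so $P_w$ is contained in neither $P_u$ nor $P_v$, and improves neither).

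The main obstacle I expect is the careful treatment of trivial and co-trivial wedges, where the decomposition $A_1, I_v, A_2, I_u$ may collapse (e.g., a trivial $\Omega_u$ forces $A_1 = \{a_1\}$, $A_2 = \{a_2\}$, $I_u = \emptyset$, with the two isolated vertices forming the two components), and where subtle coincidences like $P_w = P_u$ must be excluded. In every such degeneracy, the cobweb property $N(x) \not\subseteq N(y)$ for distinct $x, y \in I$ is the tool that rules out the bad coincidence and rescues the argument.
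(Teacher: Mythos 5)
Your proof is correct and follows essentially the same approach as the paper's: reduce the intersection of wedges to the intersection of two arcs $P_u = \Omega_u - u$ and $P_v = \Omega_v - v$ on the cycle, use the cobweb axiom $N(x)\not\subseteq N(y)$ to rule out the degenerate coincidences, and for part (c) do a case analysis on where the endpoints of $P_w$ land in the arcs $A_1, A_2$. One trivial point worth a sentence: the lemma does not assume $u \neq v$, so you should note that when $u=v$ the graph $\Omega_u\cap\Omega_v=\Omega_u$ is a cycle (one component) and $u$ cannot attach to its own wedge, so (a) and (b) hold vacuously, before invoking the identity $\Omega_u\cap\Omega_v=P_u\cap P_v$ which requires $u\neq v$.
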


\begin{proof}
    We first prove \ref{itema} and \ref{itemb} simultaneously. 
    Suppose that~$u$ attaches to~${\wedgeselection{v}}$ and~$v$ attaches to~${\wedgeselection{u}}$, so neither~$\wedgeselection{u}$ nor~$\wedgeselection{v}$ are trivial wedges. 
    Then, every vertex of~$C$ belongs to~${\wedgeselection{u} \cup \wedgeselection{v}}$. 
    For both~${x \in \{u,v\}}$, let~${Q_x}$ denote the the path obtained by deleting from~$C$ the internal vertices of~${\wedgeselection{x} - x}$ (note that~$Q_x$ is indeed a path since~$\wedgeselection{x}$ is non-trivial). 
    Note that~$Q_u$ and~$Q_v$ are edge-disjoint path of length at least~$1$. 
    Every edge of~$C$ is contained in exactly one of~${E(\wedgeselection{u} \cap \wedgeselection{v})}$, $E(Q_u)$, or~$E(Q_v)$. 
    Hence, ${\wedgeselection{u} \cap \wedgeselection{v}}$ is the graph obtained from~$C$ by deleting the edges of the path~$Q_u$, the edges of the path~$Q_v$, and all internal vertices of either~$Q_u$ or~$Q_v$. 
    From this construction, we observe that ${\wedgeselection{u} \cap \wedgeselection{v}}$ has exactly two components. 
    
    Conversely, without loss of generality, let us assume that~$u$ does not attach to~$\wedgeselection{v}$. 
    Hence, either~${u = v}$ and~$\wedgeselection{u} \cap \wedgeselection{v} = \wedgeselection{u}$ is connected, or~${u \neq v}$ and there exists an~${x \in N(u) \cap V(C - \wedgeselection{v})}$. 
    So~${\wedgeselection{u} \cap \wedgeselection{v} \subseteq C - x}$. 
    Since all of~${C - x}$, ${\wedgeselection{u} - u}$, and~${\wedgeselection{v} - v}$ are paths, we have that~${\wedgeselection{u} \cap \wedgeselection{v}}$, if non-empty, is a path as well (by the Helly property of subpaths of a path). 
    Hence, ${\wedgeselection{u} \cap \wedgeselection{v}}$ has at most one component. 

    Thus, either~${\wedgeselection{u} \cap \wedgeselection{v}}$ has exactly two components (if~$u$ attaches to~${\wedgeselection{v}}$ and~$v$ attaches to~${\wedgeselection{u}}$), or ${\wedgeselection{u} \cap \wedgeselection{v}}$ has at most one component (otherwise). 
    This proves \ref{itema} and \ref{itemb}. 
    
    For \ref{itemc}, suppose~${\wedgeselection{u} \cap \wedgeselection{v}}$ has two distinct components and $w$ attaches to both~$\wedgeselection{u}$ and~$\wedgeselection{v}$, so both~$\wedgeselection{u}$ and~$\wedgeselection{v}$ are non-trivial. 
    
    First suppose that~$w$ has a neighbour in both components of~${\wedgeselection{u} \cap \wedgeselection{v}}$. 
    We distinguish two cases. 
    Either~${\wedgeselection{w} - w}$ is contained in one component of~${\wedgeselection{u} \cap \wedgeselection{v}}$, in which case neither~$u$ nor~$v$ attach to~$\wedgeselection{w}$ and, hence,~$\wedgeselection{w}$ improves both~$\wedgeselection{u}$ and~$\wedgeselection{v}$ by \zcref{obs:AttachPairOrImprove}. 
    Or the path~${\wedgeselection{w} - w}$ has an endpoint in each component of~${\wedgeselection{u} \cap \wedgeselection{v}}$, in which case exactly one of~$u$ or~$v$ attaches to~$\wedgeselection{w}$, see \zcref{fig:CobwebsGoodPair}(b), and hence~$\wedgeselection{w}$ improves exactly one of~$\wedgeselection{u}$ or~$\wedgeselection{v}$, again by \zcref{obs:AttachPairOrImprove}. 
    
    Second, suppose all neighbours of~$w$ are in the same component of~${\wedgeselection{u} \cap \wedgeselection{v}}$, then, as before, 
    either~${\wedgeselection{w} - w}$ is contained in one component of~${\wedgeselection{u} \cap \wedgeselection{v}}$, 
    in which case~$\wedgeselection{w}$ improves both~$\wedgeselection{u}$ and~$\wedgeselection{v}$, or both~$u$ and~$v$ attach to~$\wedgeselection{u}$, and therefore~$\wedgeselection{w}$ improves neither~$\wedgeselection{u}$ nor~$\wedgeselection{v}$ by \zcref{obs:AttachPairOrImprove}. 
\end{proof}

We call a pair~$\{u,v\}$ of (not necessarily distinct) good vertices in~$I$ a \emph{good pair}. 
As a direct consequence of \zcref{lem:attachingpair}, we observe that if~${\{u,v\}}$ is a good pair, then every~${w \in I}$ that attaches to both~$\wedgeselection{u}$ and~$\wedgeselection{v}$ has neighbours in only one component of~${\wedgeselection{u} \cap \wedgeselection{v} \cap C}$. 
Note that unless~${u = v}$, we have that~${\wedgeselection{u} \cap \wedgeselection{v} \subseteq C}$, so the additional intersection with~$C$ guarantees that each component is a path. 

In general, there are three kinds of good pairs~$\{u,v\}$:
\begin{enumerate}
    \item ${u = v}$, 
    \item ${u \neq v}$ and~$u$ does not attach to~$\wedgeselection{v}$ and vice versa (see \zcref{fig:KindsOfGoodPairs}(ii)), or
    \item ${u \neq v}$, ${u}$ attaches to~$\wedgeselection{v}$, and~${v}$ attaches to~$\wedgeselection{u}$  (see \zcref{fig:KindsOfGoodPairs}(iii)). 
\end{enumerate}
\begin{figure}[htbp]
    \centering
    \includegraphics[scale=0.5]{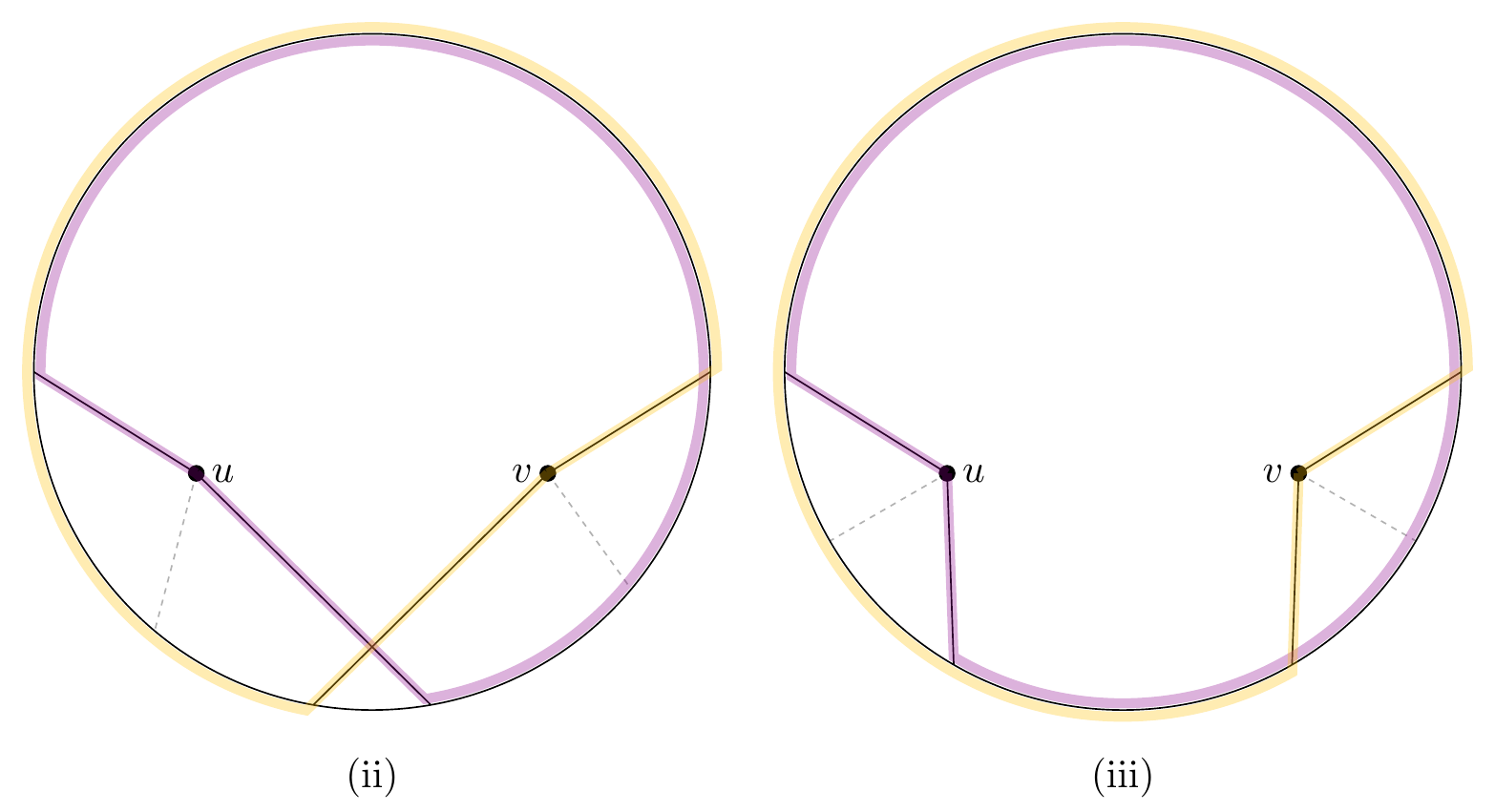}
    \caption{(ii) A good pair of the second kind.\\
    (iii) A good pair of the third kind. }
    \label{fig:KindsOfGoodPairs}
\end{figure}
Note that in pairs of the second kind, $\wedgeselection{u} \cap \wedgeselection{v}$ has at most one component and in pairs of the third kind, ${\wedgeselection{u} \cap \wedgeselection{v}}$ has two components and no vertex~${w \in I}$ that attaches to both~$\wedgeselection{u}$ and~$\wedgeselection{v}$ has neighbours in both components of $\wedgeselection{u} \cap \wedgeselection{v}$. 

Whenever we have a good pair~${\{u,v\}}$, then every component of~${\wedgeselection{u} \cap \wedgeselection{v} \cap C}$ can be separated from the rest of the cycle by deleting the barriers of the respective wedges, as we show in the next lemma. 

\begin{lemma}
    \label{lem:goodpair1}
    Let~$G$ be a cobweb with presentation~${(C,I)}$ and let~$\wedgeselectionnull$ be a wedge-selection for~${(G,C,I)}$. 
    If~${\{u,v\} \subseteq I}$ is a good pair, then each component~$P$ of~${\wedgeselection{u} \cap \wedgeselection{v} \cap C}$ is separated from~${C - V(P)}$ in~$G$ by the union of the two barriers~${\barrier{u} \cup \barrier{v}}$.
\end{lemma}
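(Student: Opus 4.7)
The plan is a case analysis on the three kinds of good pair described right before the lemma. In each case I first verify that the two endpoints of the component~$P$, viewed as a sub-path of the cycle~$C$, lie in $\barrier{u}\cup\barrier{v}$; this immediately gives that no path within~$C$ can leave $V(P)$ without crossing the barrier. I then upgrade this to separation in~$G$ by ruling out $I$-detours.

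When $u=v$, we have $P=\wedgeselection{u}\cap C$, whose endpoints are the wedge-neighbours of~$u$ and hence lie in~$\barrier{u}$ by the definition of the barrier. When $u\neq v$ with no mutual attachment, \zcref{lem:attachingpair}(b) gives that $P$ is the unique component (or is empty, in which case there is nothing to show); any endpoint of~$P$ has a $C$-neighbour outside $V(\wedgeselection{u})\cap V(\wedgeselection{v})\cap V(C)$, forcing it to be an endpoint of the path $\wedgeselection{u}\cap C$ or of $\wedgeselection{v}\cap C$ and thus to lie in $\barrier{u}$ or $\barrier{v}$. When $u\neq v$ with mutual attachment, the cyclic picture established in the proof of \zcref{lem:attachingpair}(a)--(b), where $C$ alternates between the two components $P_1,P_2$ and the two complementary arcs $Q_u,Q_v$ of the wedge bases, shows that each~$P_i$ has one endpoint adjacent in~$C$ to~$Q_v$---an endpoint of $\wedgeselection{u}\cap C$ and thus in $\barrier{u}$---and one adjacent to~$Q_u$---an endpoint of $\wedgeselection{v}\cap C$ and thus in $\barrier{v}$.

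For the lift from separation along~$C$ to separation in~$G$, I will use the direct consequence of \zcref{lem:attachingpair}(c) noted right after that lemma: for any good pair~$\{u,v\}$, every $I$-vertex $w\notin\{u,v\}$ attaching to both wedges has its neighbours in a single component of $\wedgeselection{u}\cap\wedgeselection{v}\cap C$, while any $I$-vertex $w$ not attaching to both has a neighbour in $V(C)\setminus V(\wedgeselection{u})$ or in $V(C)\setminus V(\wedgeselection{v})$, which by the common-$I$-neighbour clause in the definition of the barrier forces any internal vertex of $\wedgeselection{u}\cap C$ or $\wedgeselection{v}\cap C$ adjacent to~$w$ to lie in $\barrier{u}$ or $\barrier{v}$. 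Together these imply that no $I$-vertex outside $\{u,v\}$ can provide a surviving detour from $V(P)\setminus(\barrier{u}\cup\barrier{v})$ to $V(C)\setminus V(P)\setminus(\barrier{u}\cup\barrier{v})$. The step I expect to be the main obstacle is the treatment of the anchors $u$ and $v$ themselves, which are excluded from the consequence of \zcref{lem:attachingpair}(c); here I will use the attachment structure of the good pair---so that each anchor's wedge-neighbours already lie in one of the two barriers---together with the good condition to argue that no anchor provides a surviving edge out of the component.
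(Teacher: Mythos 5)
Your plan is correct and tracks the paper's own proof: both arguments rest on the same two ingredients, namely that (via \zcref{obs:CobwebAttaches} or, equivalently, the definition of the barrier) a vertex of~$I$ that fails to attach to~$\wedgeselection{x}$ has its neighbours on~$\wedgeselection{x} \cap C$ confined to~$\barrier{x}$, and that (via \zcref{lem:attachingpair}\ref{itemc} and goodness) a vertex of~$I$ attaching to both wedges has all its neighbours in a single component of~$\wedgeselection{u} \cap \wedgeselection{v} \cap C$. What you add is a more granular organisation: you split into the three kinds of good pair, you explicitly verify that the endpoints of~$P$ on the cycle are themselves in~$\barrier{u} \cup \barrier{v}$ (which handles the $(P, C-V(P))$-paths that are single edges of~$C$ --- a case the paper's proof leaves implicit), and you treat the anchors~$u,v$ as a separate case rather than relying on the anchor clause already built into \zcref{obs:CobwebAttaches}. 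All of these steps go through. One small slip to fix when writing it out: in the mutual-attachment case you have the labels swapped --- the endpoint of~$P_i$ bordering~$Q_v$ is an endpoint of~$\wedgeselection{v} \cap C$ (hence in~$\barrier{v}$), and the one bordering~$Q_u$ is an endpoint of~$\wedgeselection{u} \cap C$ (hence in~$\barrier{u}$); the conclusion that both endpoints lie in~$\barrier{u} \cup \barrier{v}$ is of course unaffected. Also note that for the anchor step no appeal to goodness is actually needed: the anchor's only neighbours inside~$V(\wedgeselection{u} \cap C)$ are, by the definition of a wedge, the two endpoints of~$\wedgeselection{u} - u$, which are in~$\barrier{u}$ regardless.
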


\begin{proof}
    Let~$P$ be a component of~${\wedgeselection{u} \cap \wedgeselection{v} \cap C}$. 
    If some~${w \in I}$ has a neighbour in~$P$ and a neighbour in~${C - V(\wedgeselection{u} \cap \wedgeselection{v})}$, then~$w$ does not attach to~$\wedgeselection{x}$ for at least one~${x \in \{u,v\}}$, so by \zcref{obs:CobwebAttaches}, ${\barrier{x}}$ separates~$w$ from~$\wedgeselection{x}$ in~$G$. 
    In particular, if~${\wedgeselection{u} \cap \wedgeselection{v}}$ admits a single component, we are done. 
    So suppose~$P$ and~$Q$ are the two components (see \zcref{lem:attachingpair}) of~${\wedgeselection{u} \cap \wedgeselection{v}}$. 
    By the conclusion above, no vertex that does not attach to both~${\wedgeselection{u}}$ and~${\wedgeselection{v}}$ lies on a $(P,Q)$-path in~${G - (\barrier{u} \cup \barrier{v})}$. 
    Since~${\{u,v\}}$ is good, by \zcref{lem:attachingpair}, no vertex~${w \in I}$ that attaches to both~${\wedgeselection{u}}$ and~${\wedgeselection{v}}$ has a neighbour in both~${P}$ and~${Q}$. 
    Since~${I}$ is independent, we conclude that no ${(P,Q)}$-path in~${G - (\barrier{u} \cup \barrier{v})}$ can exist. 
\end{proof}

\begin{lemma}
    \label{lem:goodtripletopair}
    Let~$G$ be a cobweb with presentation~${(C,I)}$ and let~$\wedgeselectionnull$ be a wedge-selection for~${(G,C,I)}$. 
    Let~${u,v,w \in I}$ and let~$P$ be a component of~${\wedgeselection{u} \cap \wedgeselection{v} \cap \wedgeselection{w} \cap C}$. 
    Then there exists~$\{x,y\} \subseteq \{u,v,w\}$ such that~$P$ is a component of~${\wedgeselection{x} \cap \wedgeselection{y}}$. 
\end{lemma}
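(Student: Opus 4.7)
My plan is to reduce the problem to an analysis of the intersection of three subpaths of the cycle $C$. For each $z \in I$, set $P_z \coloneqq \Omega_z \cap C$, which is a subpath of $C$ whose endpoints lie in $N(z)$. Since $I$ is independent, the anchor of any wedge lies in no other wedge, so for distinct $z_1, z_2 \in I$ we have $\Omega_{z_1} \cap \Omega_{z_2} = P_{z_1} \cap P_{z_2}$; analogously, $\Omega_u \cap \Omega_v \cap \Omega_w \cap C = P_u \cap P_v \cap P_w$. Assuming $u, v, w$ are pairwise distinct (the cases where two coincide are settled by direct relabeling), it therefore suffices to find distinct $x, y \in \{u, v, w\}$ such that $P$ is a component of $P_x \cap P_y$.

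Next I would look at the two endpoints $a, b$ of the subpath $P$ of $C$ (with $a = b$ if $P$ is a single vertex), and write $a^-, b^+$ for the $C$-neighbours of $a, b$ that lie outside $P$. Since $P$ is maximal connected in $P_u \cap P_v \cap P_w$, there exist $x_a, x_b \in \{u, v, w\}$ ``obstructing'' extension past $a$ and $b$: either $a^- \notin V(P_{x_a})$ or $a a^- \notin E(P_{x_a})$, and symmetrically for $x_b$. If we can pick $x_a \neq x_b$, then $\{x, y\} \coloneqq \{x_a, x_b\}$ gives $P \subseteq P_x \cap P_y$ with both potential extensions blocked, so $P$ is a component of $P_x \cap P_y = \Omega_x \cap \Omega_y$, and we are done.

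The essential case is when both obstructions must come from the same wedge, say $P_u$. Here I would prove $P_u = P$, after which the proof concludes immediately: taking any $y \in \{v, w\}$, we have $\Omega_u \cap \Omega_y = P_u \cap P_y = P \cap P_y = P$ (using $P \subseteq P_y$), so $P$ is the unique component of $\Omega_u \cap \Omega_y$. To establish $P_u = P$, the key technical observation is that a ``missing-edge'' obstruction at $a$, namely $a, a^- \in V(P_u)$ but $a a^- \notin E(P_u)$, forces $P_u$ to be a Hamilton path of $C$ whose unique missing edge is $a a^-$. Since a Hamilton path of $C$ misses only one edge and the edges $a a^-$ and $b b^+$ are distinct (a small case-check handles the single-vertex case for $P$), $P_u$ cannot supply a missing-edge obstruction at both ends; and a Hamilton path cannot mix with a missing-vertex obstruction at the other side, since it covers all of $V(C)$. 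So both obstructions are of the missing-vertex form, giving $a^-, b^+ \notin V(P_u)$. Since $P_u$ is a contiguous arc of $C$ containing $P$ but neither of its outer neighbours, $V(P_u) = V(P)$, and hence $P_u = P$. The main obstacle will be this Hamilton-path case analysis, which ensures that a wedge whose $C$-part is all of $V(C)$ but misses a single cycle-edge cannot be the sole obstruction on both sides.
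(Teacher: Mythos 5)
Your proof is correct and takes essentially the same route as the paper's. Both hinge on the same observation: each endpoint of $P$ must be an endpoint of one of the three paths $\wedgeselection{u}-u$, $\wedgeselection{v}-v$, $\wedgeselection{w}-w$, because otherwise both $C$-neighbours of that endpoint (and the edges to them) would survive into the triple intersection, contradicting maximality of $P$. Your ``obstruction'' formulation is equivalent to this: when $P$ has at least two vertices, the missing edge $aa^-\notin E(P_x)$ forces $a$ to be an endpoint of $P_x$, and this also covers your Hamiltonian-path (co-trivial wedge) case, since a Hamiltonian path of $C$ has as endpoints precisely the two ends of its unique missing edge. The genuine added value of your write-up is the careful treatment of the case in which both obstructions are forced to come from one and the same path $P_u$. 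The paper's proof simply instructs one to take ``any pair $\{x,y\}$'' with the right endpoint property and asserts the conclusion; this leaves unaddressed the case $P=\wedgeselection{u}-u$, where the literal pair $\{x,y\}=\{u\}$ gives $\wedgeselection{x}\cap\wedgeselection{y}=\wedgeselection{u}$, of which $P$ is \emph{not} a component. Your argument resolves this explicitly by proving $P_u=P$ (ruling out the missing-edge obstruction via the one-missing-edge count and the covering property of a Hamilton path, then squeezing $V(P_u)=V(P)$), and then taking $y\in\{v,w\}\setminus\{u\}$. This is a cleaner justification of the step the paper compresses.

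One small caveat: you dismiss coincidences among $u,v,w$ as ``settled by direct relabeling.'' That is fine when two coincide (say $u=v\neq w$: then $P$ is already a component of $\wedgeselection{u}\cap\wedgeselection{w}$). But if $u=v=w$, the triple intersection equals $\wedgeselection{u}-u$, and the only available pair is $\{u\}$, for which $\wedgeselection{u}\cap\wedgeselection{u}=\wedgeselection{u}$ does not have $P=\wedgeselection{u}-u$ as a component. This degenerate corner is really a wrinkle in how the lemma is phrased rather than a gap in your proof (what is used downstream, via \zcref{lem:goodpair1} and \zcref{cor:goodtriple}, is really the statement with $\wedgeselection{x}\cap\wedgeselection{y}\cap C$, under which all cases, including $u=v=w$, go through), but it would be worth stating it precisely rather than appealing to relabeling.
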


\begin{proof}
    Observe that~$P$ is a path. 
    Now each endpoint of~$P$ is an endpoint of at least one of the paths~${\wedgeselection{u} - u}$, ${\wedgeselection{v} - v}$, or~${\wedgeselection{w} - w}$ since otherwise, both neighbours of the endpoint of~$P$ on~$C$ would be contained in~${\wedgeselection{u} \cap \wedgeselection{v} \cap \wedgeselection{w} \cap C}$, contradicting that~$P$ is a component of~${\wedgeselection{u} \cap \wedgeselection{v} \cap \wedgeselection{w} \cap C}$. 
    So let~$\{x,y\} \subseteq \{u,v,w\}$ be any pair such that one endpoint of~$P$ is an endpoint of~${\wedgeselection{x} - x}$ and the other endpoint of~$P$ is an endpoint of~${\wedgeselection{y} - y}$. 
    Then, $P$ is a component of~${\wedgeselection{x} \cap \wedgeselection{y}}$, as desired.  
\end{proof}

We say a triple~$\{u,v,w\}$ of (not necessarily distinct) good vertices in~$I$ is a \emph{good triple}. 
So in particular, every~${\{x,y\} \subseteq \{u,v,w\}}$ is a good pair. 
As a direct consequence of this definition and \zcref{lem:goodtripletopair,lem:goodpair1}, we observe the following. 

\begin{corollary}
    \label{cor:goodtriple}
    Let~$G$ be a cobweb with presentation~${(C,I)}$ and let~$\wedgeselectionnull$ be a wedge-selection for~${(G,C,I)}$. 
    If~${\{u,v,w\} \subseteq I}$ is a good triple, then each component~$P$ of ${\wedgeselection{u} \cap \wedgeselection{v} \cap \wedgeselection{w} \cap C}$ is separated from~${C - V(P)}$ in~$G$ by the union of the three barriers~${\barrier{u} \cup \barrier{v} \cup \barrier{w}}$. \qed
\end{corollary}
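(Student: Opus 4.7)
The plan is to derive the corollary directly by combining the two preceding lemmas, since the statement is essentially the ``for triples'' analogue of \zcref{lem:goodpair1}. The key observation is that \zcref{lem:goodtripletopair} reduces the question about the triple to a question about a pair, which \zcref{lem:goodpair1} already handles.

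Concretely, I would fix a component $P$ of $\wedgeselection{u} \cap \wedgeselection{v} \cap \wedgeselection{w} \cap C$ and apply \zcref{lem:goodtripletopair} to obtain some pair $\{x,y\} \subseteq \{u,v,w\}$ such that $P$ is a component of $\wedgeselection{x} \cap \wedgeselection{y}$. Since $\{u,v,w\}$ is a good triple, each of the sub-pairs of it consists of good vertices, and in particular $\{x,y\}$ is a good pair. So \zcref{lem:goodpair1} is applicable to $\{x,y\}$ once we verify that $P$ is also a component of $\wedgeselection{x} \cap \wedgeselection{y} \cap C$.

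The small technical step is this verification. Since $P \subseteq C$ and $P$ is connected in $\wedgeselection{x} \cap \wedgeselection{y}$, it is also connected in the (possibly smaller) subgraph $\wedgeselection{x} \cap \wedgeselection{y} \cap C$. Maximality follows because any connected subgraph of $\wedgeselection{x} \cap \wedgeselection{y} \cap C$ properly containing $P$ would also be a connected subgraph of $\wedgeselection{x} \cap \wedgeselection{y}$ properly containing $P$, contradicting that $P$ is a component there. Hence $P$ is a component of $\wedgeselection{x} \cap \wedgeselection{y} \cap C$, and \zcref{lem:goodpair1} yields that $\barrier{x} \cup \barrier{y}$ separates $P$ from $C - V(P)$ in $G$. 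Since $\barrier{x} \cup \barrier{y} \subseteq \barrier{u} \cup \barrier{v} \cup \barrier{w}$, the latter set also separates $P$ from $C - V(P)$.

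There is no real obstacle here; the corollary is a clean consequence of \zcref{lem:goodtripletopair} (which produces the pair) and \zcref{lem:goodpair1} (which provides the separating barriers). The only care needed is to be sure that the ``component'' notion transfers correctly when intersecting with $C$, which is straightforward because $P$ already lives inside $C$.
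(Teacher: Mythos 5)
Your proof is correct and follows exactly the route the paper intends: the paper gives no explicit proof, labeling the corollary a direct consequence of \zcref{lem:goodtripletopair,lem:goodpair1} and the definition of a good triple. You have simply spelled out the two steps the paper leaves implicit (obtain the pair $\{x,y\}$ via \zcref{lem:goodtripletopair}, then apply \zcref{lem:goodpair1}), including the small but genuinely needed check that $P$ remains a component after intersecting with $C$.
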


Given a cobweb~$G$ with presentation~${(C,I)}$ and a wedge-selection~$\wedgeselectionnull$ for~${(G,C,I)}$, we denote by~$\mathcal{T}$ the set of non-empty sets of size at most~$3$ of good vertices. 
We call a function~$\segmentselectionnull$ mapping each~${\{u,v,w\} \in \mathcal{T}}$ to a component~$\segmentselection{\{u,v,w\}}$ of~${\wedgeselection{u} \cap \wedgeselection{v} \cap \wedgeselection{w} \cap C}$ for every~${\{u,v,w\} \in \mathcal{T}}$ a \emph{segment-selection for~$(C,I,\wedgeselectionnull)$}. 
Recall that a family of graphs is said to be \emph{intersecting} if the pairwise intersections of the vertex sets of these graphs are non-empty. 

\begin{lemma}
    \label{lem:separatinggoodpair}
    Let~$G$ be a cobweb with presentation~${(C,I)}$, let~$\wedgeselectionnull$ be a wedge-selection for~${(G,C,I)}$, and let~$\segmentselectionnull$ be a segment-selection for~$(C,I,\wedgeselectionnull)$ with image~$\mathcal{P}$. 
    If~$\mathcal{P}$ is intersecting, then there exists a good pair~${\{u,v\}}$ such that~${S(\{u,v\}) \coloneqq \barrier{u} \cup \barrier{v}}$ separates~$I$ from~${\segmentselection{\{u,v\}}}$ in~$G$. 
\end{lemma}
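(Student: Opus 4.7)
I would pick a good pair $\{u,v\}$ (allowing $u=v$) that minimises $|V(\segmentselection{\{u,v\}})|$ over all good pairs in~$\mathcal{T}$---such a minimiser exists since $\mathcal{T}$ contains every good singleton and $G$ is finite---and argue that $S(\{u,v\})=\barrier{u}\cup\barrier{v}$ must then separate $I$ from $\segmentselection{\{u,v\}}$ in~$G$.

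\textbf{Forcing a direct $I$--$\segmentselection{\{u,v\}}$ edge.} Assume toward contradiction that $S(\{u,v\})$ fails to separate. By \zcref{lem:goodpair1}, $S(\{u,v\})$ already separates $\segmentselection{\{u,v\}}$ from $V(C)\setminus V(\segmentselection{\{u,v\}})$; combined with $V(I)\cap V(C)=\emptyset$ and the independence of~$I$, any witnessing path in $G-S(\{u,v\})$ must end with an edge $zx$ where $z\in I$ and $x\in V(\segmentselection{\{u,v\}})\setminus S(\{u,v\})$. Because $x\notin\barrier{u}$, the explicit description of the barrier recorded just after its definition forces $x$ to be an internal vertex of $\wedgeselection{u}\cap C$ whose every $I$-neighbour attaches to $\wedgeselection{u}$, and symmetrically for $\wedgeselection{v}$. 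Thus $z$ attaches to both wedges, so the consequence of \zcref{lem:attachingpair} noted right after the lemma (valid for good pairs) pins $N(z)$ into the unique component of $\wedgeselection{u}\cap\wedgeselection{v}\cap C$ containing~$x$, namely $\segmentselection{\{u,v\}}$.

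\textbf{Constructing a strictly smaller good pair.} Pick a good $z'\preceq z$; then $\wedgeselection{z'}-z'$ is a subpath of $\wedgeselection{z}-z$, whose two endpoints lie in $N(z)\subseteq V(\segmentselection{\{u,v\}})$. Two routings are possible on~$C$. If $\wedgeselection{z}-z$ runs inside $\segmentselection{\{u,v\}}$, then $\wedgeselection{z'}-z'\subseteq V(\segmentselection{\{u,v\}})\subseteq V(\wedgeselection{u}-u)$; a strict containment makes $\wedgeselection{z'}$ improve $\wedgeselection{u}$, contradicting goodness of~$u$. If $\wedgeselection{z}-z$ takes the long way, then $\wedgeselection{u}\cap\wedgeselection{z}\cap C$ decomposes into two corner components, both lying in $\wedgeselection{u}-u$; combining the intersecting property of~$\mathcal{P}$ (which yields $\segmentselection{\{u,v,z'\}}\subseteq\segmentselection{\{u,v\}}$) with \zcref{lem:goodtripletopair} produces a good pair in $\{u,v,z'\}$ whose chosen segment sits strictly inside $\segmentselection{\{u,v\}}$---contradicting the minimality of $\{u,v\}$.

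\textbf{Main obstacle.} The delicate case is the boundary tie where $\wedgeselection{z'}-z'=\wedgeselection{u}-u$: here $z'$ and~$u$ share a wedge-path, and hence (by the barrier characterisation) share the same barrier, so plain size minimisation gives no strict descent. Resolving this requires combining the cobweb incomparability conditions $N(u)\not\subseteq N(z')$ and $N(z')\not\subseteq N(u)$ with the intersecting property of~$\mathcal{P}$: either one finds an internal vertex of the shared wedge-path that is forced into the barrier via an $I$-witness connecting it to a vertex outside the wedge (so the separator in fact works), or one exhibits a good triple $\{u,v,z'\}$ whose segment is strictly smaller and---by \zcref{lem:goodtripletopair} together with the intersecting constraint fixing the component of $\wedgeselection{x}\cap\wedgeselection{y}\cap C$ chosen by~$\segmentselectionnull$---coincides with $\segmentselection{\{x,y\}}$ for some good pair $\{x,y\}\subseteq\{u,v,z'\}$. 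Executing this boundary argument cleanly is the principal technical hurdle.
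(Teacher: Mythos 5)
Your overall strategy matches the paper's: pick a good pair $\{u,v\}$ with minimal segment $\segmentselection{\{u,v\}}$, suppose $S(\{u,v\})$ fails to separate, locate an $I$-vertex with all its neighbours inside $\segmentselection{\{u,v\}}$, and then manufacture a good pair with a strictly smaller segment, contradicting minimality. However, the boundary case you flag at the end is a genuine gap, and it does not require the heavy resolution you suggest: the paper disposes of it automatically through \zcref{obs:AttachPairOrImprove} and \zcref{lem:attachingpair}, which already encode the cobweb incomparability condition. Concretely, once you know that $z$ (the paper's $x\in I_0$) attaches to both $\wedgeselection{u}$ and $\wedgeselection{v}$, and that $\wedgeselection{z}$ improves neither (since $u,v$ are good), \zcref{obs:AttachPairOrImprove} forces $u$ and $v$ to attach to $\wedgeselection{z}$, and then \zcref{lem:attachingpair}\ref{itemb} forces $\wedgeselection{u}\cap\wedgeselection{z}\cap C$ to have exactly two components. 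In particular $\wedgeselection{z}-z$ cannot equal $\wedgeselection{u}-u$ (that would make the intersection a single component), and more to the point it cannot be contained in $\segmentselection{\{u,v\}}$ at all; your ``Case 1'' never actually occurs, and the ``boundary tie'' you describe is ruled out without any ad hoc argument. (Intuitively: if $\wedgeselection{z}-z=\wedgeselection{u}-u$ and $z$ attaches to $\wedgeselection{u}$ while $u$ attaches to $\wedgeselection{z}$, then using independence of $I$ one gets $N(z)=N(u)$, contradicting the cobweb condition.)

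Your ``Case 2'' is essentially the paper's argument, but it is stated too loosely to be checked as written. You argue about the two-component structure of $\wedgeselection{u}\cap\wedgeselection{z}\cap C$ using the wedge of $z$, yet the good vertex you work with is $z'\preceq z$, whose wedge $\wedgeselection{z'}$ only satisfies $\wedgeselection{z'}-z'\subseteq\wedgeselection{z}-z$ and need not itself ``go the long way''. What the paper does is first establish the two ``corner pieces'' $Q_1,Q_2$ of $\segmentselection{\{u,v\}}\cap\wedgeselection{z}$, then use $\segmentselection{\{z'\}}\subseteq\wedgeselection{z}-z$ together with the intersecting property of $\mathcal{P}$ to pin down the endpoints of $\segmentselection{\{z'\}}$ relative to $Q_1,Q_2$, and only then invoke \zcref{lem:goodtripletopair} plus the intersecting property to identify $\segmentselection{\{u,v,z'\}}$ with $\segmentselection{\{u,z'\}}$ or $\segmentselection{\{v,z'\}}$, each a proper subset of $\segmentselection{\{u,v\}}$. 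Your sketch asserts the conclusion (``segment sits strictly inside $\segmentselection{\{u,v\}}$'') without supplying the $Q_1,Q_2$ endpoint analysis that actually forces it; you also have to rule out that the good pair produced is $\{u,v\}$ itself, which follows by combining the intersecting property with the strict containment. In short: right strategy, but the argument for strictness is missing, and the boundary case you worry about is in fact already precluded by the lemmas you cite.
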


\begin{proof}
    Let~$\{u,v\}$ be a good pair such that~$\segmentselection{\{u,v\}}$ is $\subseteq$-minimal among all good pairs. 
    Note that we allow~${u = v}$ for this minimal choice. 
    
    Let~${S \coloneqq \barrier{u} \cup \barrier{v}}$. 
    To complete the proof, we need to show that $S$ separates~$I$ from~${\segmentselection{\{u,v\}}}$ in~$G$. 
    Consider the set~$I_0$ of vertices in ${I \setminus \{u,v\}}$ that have all their neighbours in~$\segmentselection{\{u,v\}}$. 
    By \zcref{lem:goodpair1}, each~${w \in I \setminus (I_0 \cup \{u,v\})}$ is separated by~$S$ from~${\segmentselection{\{u,v\}}}$. 
    Moreover, the neighbours of~$u$ and~$v$ that are not in~$S$ are in~${C - V(\segmentselection{\{u,v\}})}$, hence,~$u$ and~$v$ are separated from~$\segmentselection{\{u,v\}}$ by~$S$, again by \zcref{lem:goodpair1}. 
    So it suffices to prove that~${I_0}$ is empty. 
    
    \begin{figure}[htbp]
        \centering
        \includegraphics[scale=0.50]{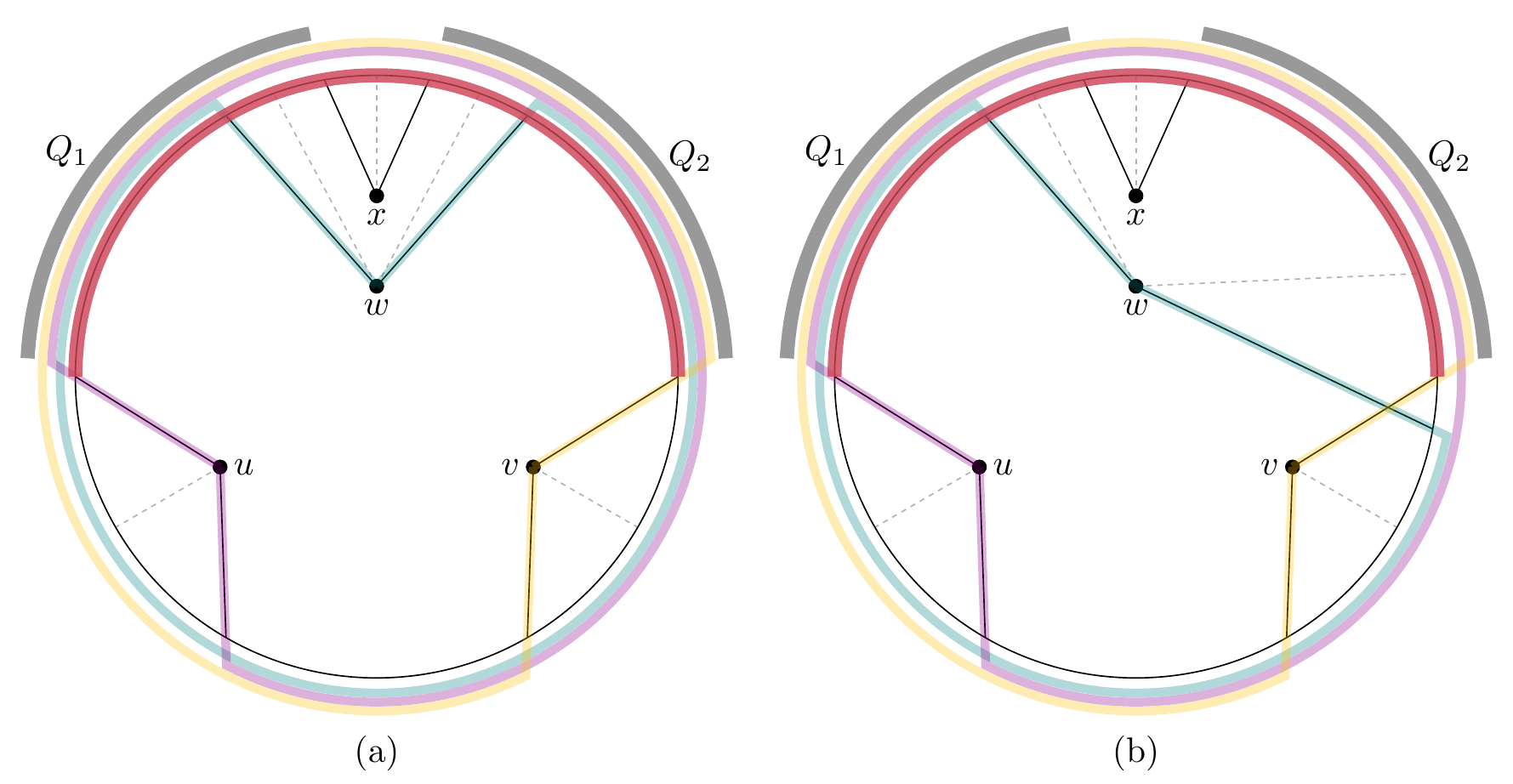}\\
        \vspace*{-0.3cm}
        \includegraphics[scale=0.50]{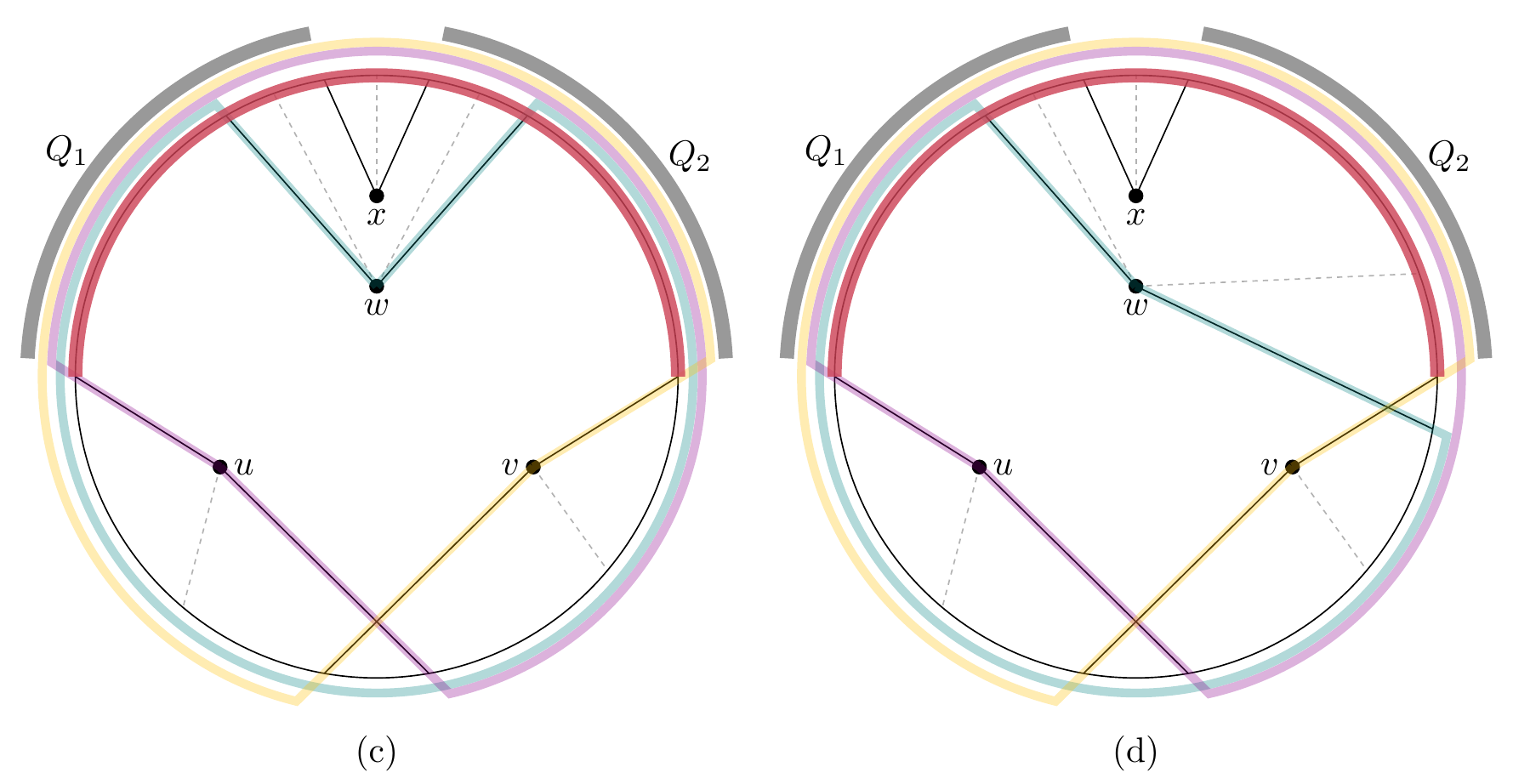}\\
        \vspace*{-0.3cm}
        \includegraphics[scale=0.5]{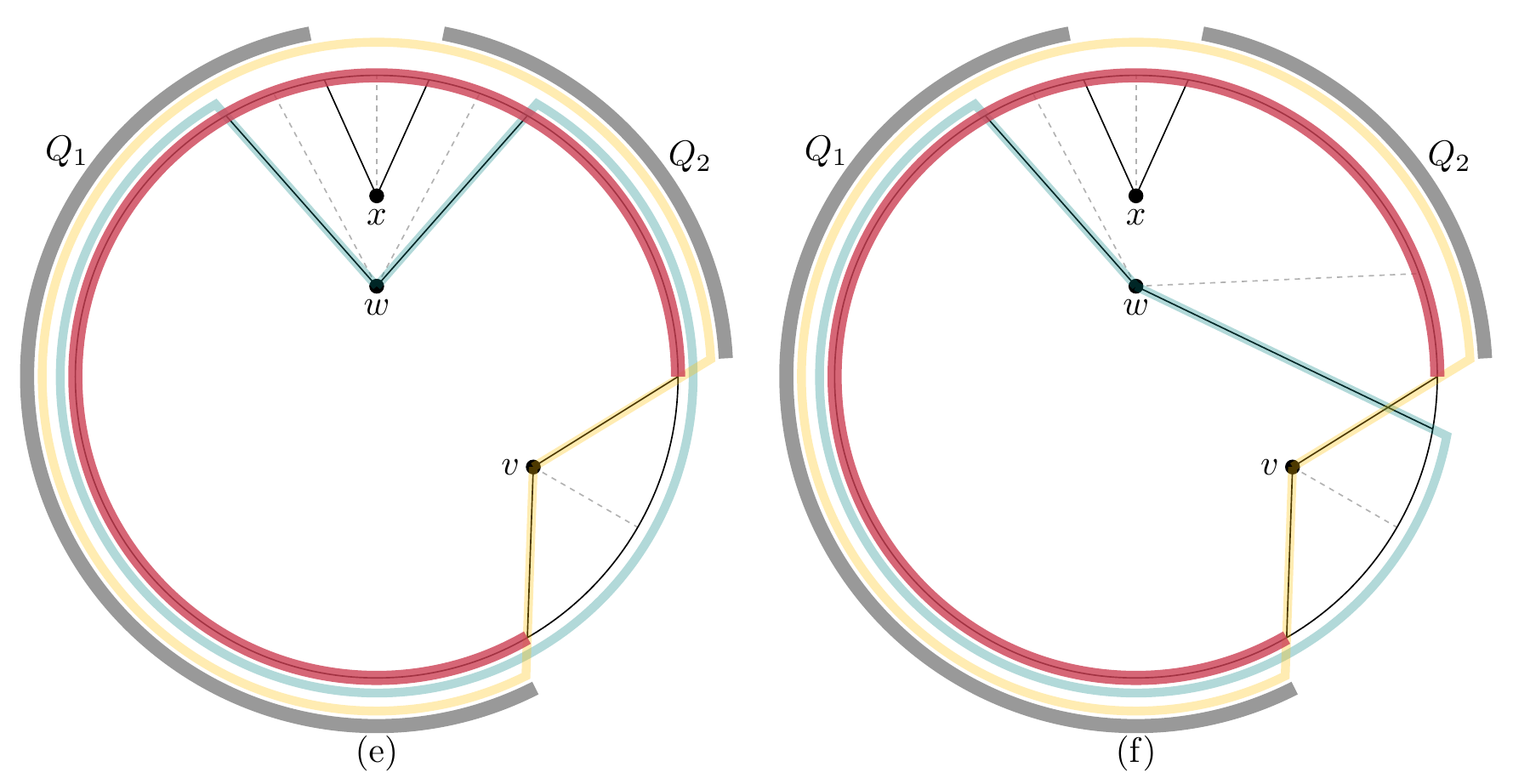}
        \caption{The situation in the proof of \zcref{lem:separatinggoodpair}, distinguished by the respective cases. 
            The red segment indicates~$\segmentselection{\{u,v\}}$. }
        \label{fig:CobwebsGoodPair}
    \end{figure}

    Let us assume for a contradiction that~$I_0$ is non-empty. 
    Let ${x \in I_0}$ be a vertex such that the path ${\wedgeselection{x} - x}$ is $\subseteq$-minimal among all vertices in~$I_0$. 
    Then~$x$ attaches to both~$\wedgeselection{u}$ and~$\wedgeselection{v}$, and since~$\wedgeselection{x}$ does neither improve~$\wedgeselection{u}$ nor~$\wedgeselection{v}$, both~$u$ and~$v$ attach to~$\wedgeselection{x}$ by \zcref{obs:AttachPairOrImprove}. 
    Moreover, we observe that~${\segmentselection{\{u,v\}} \cap \wedgeselection{x}}$ has two components; we denote the component containing an endpoint of~${\segmentselection{\{u\}}}$ by~$Q_1$ and the component containing an endpoint of~${\segmentselection{\{v\}}}$ by~$Q_2$ (if~${u = v}$, we allocate these names arbitrarily). 
    
    Let~${w \in I}$ be any good vertex with~${w \preceq x}$. 
    Then by definition, $\{u,v,w\}$ is a good triple. 
    Since~$\mathcal{P}$ is intersecting, and hence~${\segmentselection{\{w\}}}$ intersects~$\segmentselection{\{u,v\}}$, and since~${w \preceq x}$, 
    at least one endpoint of~${\segmentselection{\{w\}}}$ is contained in either~$Q_1$ or~$Q_2$. 
    Without loss of generality, let us say it is contained in~$Q_1$. 
    If the other endpoint of~$\segmentselection{\{w\}}$ would be contained in~${(\wedgeselection{u} \cap \wedgeselection{v}) \setminus \segmentselection{\{u,v\}}}$, then~${w \prec v}$, contradicting that~$v$ is good. 
    So the other endpoint of~${\segmentselection{\{w\}}}$ is contained in either~$Q_2$ (see \zcref{fig:CobwebsGoodPair}(a), (c), or (e)), or in~${C - V(\wedgeselection{v})}$  (see \zcref{fig:CobwebsGoodPair}(b), (d), or (f)). 
    But then, $\segmentselection{\{u,v,w\}}$ is a subset of either~$Q_1$ or~$Q_2$ since~$\mathcal{P}$ is intersecting. 
    By \zcref{lem:goodtripletopair}, we have that$~\segmentselection{\{u,v,w\}}$ is equal to either~$\segmentselection{\{u,w\}}$ or~$\segmentselection{\{v,w\}}$. 
    But both of these contradict the minimality of~$\segmentselection{\{u,v\}}$. 
    Hence, all cases lead to a contradiction to~${x \in I_0}$, so~$I_0$ is empty. 

    Therefore, for the good pair~$\{u,v\}$, we have that~$I$ is separated from~$\segmentselection{\{u,v\}}$ in~$G$ by~$S$, as desired. 
\end{proof}

\subsection{%
\texorpdfstring{Proof of \zcref{lem:NoBigComponents}}%
{Proof of Lemma 3.5}}
\label{sec:mainlemma}

We now use the tools developed in the previous subsection to prove \zcref{lem:NoBigComponents}. 
Before that, let us briefly discuss how we can deal with vertices that have nested neighbourhoods. 

\begin{lemma}
    \label{lem:identifySiblings}
    Let~$G$ be a graph, let~$\mathsf{w}$ be a normal weighting of~$G$, and let~${u, v \in V(G)}$ such that~${N(u) \subseteq N[v]}$. 
    Let~${G' \coloneqq G - u}$ and let~$\mathsf{w}'$ be the weighting of~$G'$ defined by 
    \[
        \mathsf{w}'(x) \coloneqq 
        \begin{cases}
            \mathsf{w}(u) + \mathsf{w}(v)  & \textnormal{ if } x = v\\
            \mathsf{w}(x)                   & \textnormal{ if } x \neq v \,.
        \end{cases}
    \]
    Then every $\mathsf{w}'$-balanced separator~$S$ of~$G'$ that does not contain~$v$ is a $\mathsf{w}$-balanced separator of~$G$. 
\end{lemma}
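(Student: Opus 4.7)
The plan is to pick an arbitrary component $D$ of $G - S$ and show $\mathsf{w}(D) \le \tfrac{1}{2}$, by tracking how the components of $G' - S$ change when $u$ is added back. First, I would note that $\mathsf{w}'$ is itself normal: summing its values gives $\mathsf{w}'(G') = \mathsf{w}(G) - \mathsf{w}(u) - \mathsf{w}(v) + (\mathsf{w}(u) + \mathsf{w}(v)) = \mathsf{w}(G) = 1$, so the hypothesis is that every component of $G' - S$ has $\mathsf{w}'$-weight at most $\tfrac{1}{2}$. Because $v \notin S$, let $D_v$ denote the component of $G' - S$ containing $v$; then $\mathsf{w}'(D_v) \le \tfrac{1}{2}$.

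I would then split on whether $u \in V(D)$. If $u \notin V(D)$, the component $D$ is disjoint from $u$, so it coincides with a component of $G' - S$ (since removing $u$ from $G - S$ does not disturb $D$). Either $D = D_v$, yielding $\mathsf{w}(D) = \mathsf{w}'(D_v) - \mathsf{w}(u) \le \tfrac{1}{2}$, or $v \notin V(D)$, in which case $\mathsf{w}(D) = \mathsf{w}'(D) \le \tfrac{1}{2}$.

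The substantive case is $u \in V(D)$, and here is where the assumption $N(u) \subseteq N[v]$ is used. Any neighbour $x$ of $u$ in $G - S$ lies in $N[v]$; since $v \notin S$, such an $x$ is either $v$ itself or a vertex adjacent to $v$, so $x$ and $v$ belong to the same component $D_v$ of $G' - S$. Consequently, if $u$ has any neighbour in $G - S$, then $V(D) = V(D_v) \cup \{u\}$, and using $\mathsf{w}'(v) = \mathsf{w}(u) + \mathsf{w}(v)$ I get
\[
    \mathsf{w}(D) = \mathsf{w}(D_v) + \mathsf{w}(u) = \mathsf{w}'(D_v) \le \tfrac{1}{2}.
\]
If $u$ has no neighbour in $G - S$, then $D = \{u\}$ and $\mathsf{w}(D) = \mathsf{w}(u) \le \mathsf{w}(u) + \mathsf{w}(v) = \mathsf{w}'(v) \le \mathsf{w}'(D_v) \le \tfrac{1}{2}$.

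There is no real obstacle here: the lemma essentially records the standard fact that contracting $u$ into $v$ (with weights added) preserves the balanced-separator property, provided we stay on the $v$-side of the identification. The only point that requires any thought is verifying that, upon reinserting $u$, the component structure behaves as claimed, and this is exactly what $N(u) \subseteq N[v]$ together with $v \notin S$ guarantees.
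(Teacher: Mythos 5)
Your proof is correct and follows essentially the same route as the paper's: both arguments hinge on the observation that, because $N(u)\subseteq N[v]$ and $v\notin S$, every neighbour of $u$ surviving in $G-S$ lands in the component $D_v$ of $G'-S$ containing $v$, so that reinstating $u$ either creates the isolated component $\{u\}$ or merges $u$ into $D_v$, and in each case the $\mathsf{w}$-weight is bounded by $\mathsf{w}'(D_v)\le\tfrac12$. You are a bit more explicit than the paper (naming $D_v$ and splitting on $u\in D$ versus $u\notin D$ rather than on $D=\{u\}$ versus $D\neq\{u\}$), but the underlying argument is identical.
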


\begin{proof}
    Observe that $\mathsf{w}'$ is a normal weighting of~$G'$. 
    Let~$S$ be a $\mathsf{w}'$-balanced separator~$S$ of~$G'$ that does not contain~$v$. 
    Then $\mathsf{w}'(v) \leq \nicefrac{1}{2}$. 
    Since~${N(u) \subseteq N[v]}$, either~$u$ and~$v$ are in the same component of~${G-S}$, or~${N(u) \subseteq S}$. 
    If~${N(u) \subseteq S}$, then~$G[\{u\}]$ is a component of~${G-S}$ with~$\mathsf{w}(G[\{u\}]) = \mathsf{w}(u) \leq \mathsf{w}'(v) \leq \nicefrac{1}{2}$. 
    If~$D \neq G[\{u\}]$ is a component of~${G - S}$, 
    then~${\mathsf{w}(D) \leq \mathsf{w}'(D-u) \leq \nicefrac{1}{2}}$. 
    So~$S$ is a $\mathsf{w}$-balanced separator of~$G$. 
\end{proof}

We are now ready to prove \zcref{lem:NoBigComponents}, which we restate for the reader's convenience. 

\getkeytheorem{mainlemma}

\begin{proof}
    Let~$I$ be the set of all vertices in~$J$ of degree at least~$2$. 

    Let us first discuss how we can turn a separator that is not $\mathsf{w}$-balanced but its heavy component does not contain any vertices of~$I$ into a $\mathsf{w}$-balanced separator. 
    
    \begin{claim}
        \label{clm:buildseparator}
        Suppose a non-empty set~${X \subseteq V(C)}$ is not a $\mathsf{w}$-balanced separator of~$G$, but the unique heavy component~$B$ of~${G-X}$ does not contain any vertex in~$I$. 
        Then, there is a vertex~${w \in V(B \cap C)}$ such that~${X \cup \{w\}}$ is a $\mathsf{w}$-balanced separator of~$G$. 
    \end{claim}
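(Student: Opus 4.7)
The plan is to exploit the absence of $I$-vertices in $B$ to force $G[B]$ to be a caterpillar, and then apply the tree balanced-separator lemma (\zcref{obs:TreeBalancedSep}) to its spine to find the augmenting vertex.

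First I would show that $B \cap C$ is (the vertex set of) a single subpath of $C$. Since $B$ contains no vertex of $I$, every vertex $v$ of $B \cap J$ satisfies $\deg_G(v) \leq 1$. If $\deg_G(v) = 0$, then $\{v\}$ would be its own component of $G - X$, and $\mathsf{w}(v) \leq \nicefrac{1}{2}$ would contradict $B$ being heavy; so $\deg_G(v) = 1$. The unique neighbour of such a $v$ lies in $V(C)$ (since $J$ is independent) and, as $B$ is a connected component of $G - X$, it lies in $B \cap C$. Hence, no vertex of $B \cap J$ can be an internal vertex of a path in $G - X$ bridging two distinct components of $C - X$. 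Consequently, $B \cap C$ is contained in a single component of $C - X$, and equality holds because $B$ is a full component of $G - X$. Write this path as $P = p_1 p_2 \ldots p_k$.

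It follows that $G[B]$ is a caterpillar with spine $P$ and leaf set $L_i \subseteq B \cap J$ attached to each $p_i$ (possibly empty). Define a weighting $\mathsf{w}'$ on $V(P)$ by
\[
    \mathsf{w}'(p_i) \coloneqq \mathsf{w}(p_i) + \mathsf{w}(L_i)\,,
\]
so that $\mathsf{w}'(P) = \mathsf{w}(B)$. Applying \zcref{obs:TreeBalancedSep} to the path $P$ with the weighting~$\mathsf{w}'$ yields a vertex $w \coloneqq p_j \in V(B \cap C)$ such that every component of $P - w$ has $\mathsf{w}'$-weight at most $\mathsf{w}(B)/2$. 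I would then verify that $S \coloneqq X \cup \{w\}$ is a $\mathsf{w}$-balanced separator of $G$: every component of $G - S$ is either a non-$B$ component of $G - X$ (not heavy by the choice of $B$), or one of the two half-caterpillars of $B - w$ corresponding to $\{p_1, \ldots, p_{j-1}\}$ and $\{p_{j+1}, \ldots, p_k\}$ together with their pendants (each of $\mathsf{w}$-weight exactly the $\mathsf{w}'$-weight of the corresponding subpath, hence at most $\mathsf{w}(B)/2 \leq \nicefrac{1}{2}$), or a singleton $\{v\}$ with $v \in L_j$ (of weight at most $\nicefrac{1}{2}$ by the hypothesis of \zcref{lem:NoBigComponents}).

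The only non-trivial step is the first one, establishing that $B \cap C$ is a single path; this rests crucially on the hypothesis that no $I$-vertex lies in $B$, which forces each $J$-vertex of $B$ to be a pendant. After that, the caterpillar structure makes the rest a routine application of \zcref{obs:TreeBalancedSep}.
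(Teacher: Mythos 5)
Your proof is correct and essentially follows the same approach as the paper: observe that $B$ is a caterpillar and invoke \zcref{obs:TreeBalancedSep}. The only (minor, cosmetic) difference is that you aggregate each pendant's weight onto its spine vertex and apply the tree lemma directly to the spine path, whereas the paper applies the lemma to the whole caterpillar $B$ and then handles the case where the returned vertex is a pendant by passing to its unique neighbour — your version avoids that small case split.
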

    
    \begin{subproof}
        Since each vertex in~${J \setminus I}$ has degree~$1$ and~${B-J}$ is a path, we observe that~$B$ is a tree and hence has a $\mathsf{w}$-balanced separator~$\{w'\}$ by \zcref{obs:TreeBalancedSep}. 
        If~${w' \in V(C)}$, then we set~${w \coloneqq w'}$.
        If~${w' \notin V(C)}$, then~${w' \in J \setminus I}$, so~$w'$ has a unique neighbour~$w$, and we observe that~${w \in V(C)}$. 
        But since~${\mathsf{w}(w') \leq \nicefrac{1}{2}}$, we conclude that~$\{w\}$ is a balanced separator of~${B}$ as well. 
        Now it is easy to see that~${X \cup \{w\}}$ is a $\mathsf{w}$-balanced separator of~$G$. 
    \end{subproof}

    So if~$I$ is empty, we can take~$X$ to be any vertex of~$C$, and if~$X$ is not $\mathsf{w}$-balanced separator of~$G$, then the unique heavy component~$B$ of~${G-X}$ does not contain any vertex in~$I$. 
    Hence, by \zcref{clm:buildseparator}, we have a $\mathsf{w}$-balanced separator of~$G$ of size~$2$.
    So we may assume that~$I$ is non-empty. 
      
    \begin{claim}
        \label{clm:nosiblings}
        We may assume that for each pair of distinct~${u, v \in J}$, we have~${N(u) \not\subseteq N(v)}$. 
    \end{claim}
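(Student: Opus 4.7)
The plan is to prove this WLOG-style claim by induction on $|V(G)|$: supposing there exist distinct $u, v \in J$ with $N(u) \subseteq N(v)$, I will either exhibit the desired separator directly or reduce to a strictly smaller instance still satisfying the hypotheses of \zcref{lem:NoBigComponents}. Since $J$ is independent, $v \notin N(u)$, so $N(u) \subseteq N(v) \subseteq N[v]$, which is precisely the hypothesis of \zcref{lem:identifySiblings}. I split according to whether $\mathsf{w}(u) + \mathsf{w}(v) > \nicefrac{1}{2}$ or not.

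If $\mathsf{w}(u) + \mathsf{w}(v) > \nicefrac{1}{2}$, I would take $S \coloneqq N_G(v)$. Since $J$ is independent, $S \subseteq V(C)$; and since $\{v\}$ is a component of $G - V(C)$, \zcref{obs:WheelFromLargeNeighbourhood} combined with the $W_\ell$-induced-minor-free assumption forces $|S| \leq \ell - 1 \leq (\ell-1)^2$. In $G - S$, both $u$ and $v$ become isolated (the former because $N(u) \subseteq N(v) = S$), yielding singleton components of weight at most $\nicefrac{1}{2}$ by the lemma's hypothesis on $\mathsf{w}$; every remaining component lies inside $V(G) \setminus (S \cup \{u,v\})$, whose total $\mathsf{w}$-weight is at most $1 - \mathsf{w}(u) - \mathsf{w}(v) < \nicefrac{1}{2}$ by the case assumption. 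Hence $S$ is the required $\mathsf{w}$-balanced separator.

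If instead $\mathsf{w}(u) + \mathsf{w}(v) \leq \nicefrac{1}{2}$, I define $(G', \mathsf{w}')$ as in \zcref{lem:identifySiblings}, with $G' \coloneqq G - u$. Then $G'$ is $W_\ell$-induced-minor-free as a subgraph of $G$ and connected, since $N(u) \subseteq N(v)$ allows any path in $G$ through $u$ to be rerouted through $v$; the cycle $C$ is unchanged, $J' \coloneqq J \setminus \{u\}$ is still independent, and $\mathsf{w}'$ is normal. The case assumption gives $\mathsf{w}'(v) = \mathsf{w}(u) + \mathsf{w}(v) \leq \nicefrac{1}{2}$, while $\mathsf{w}'(x) = \mathsf{w}(x) \leq \nicefrac{1}{2}$ for every other $x \in J'$, so the hypotheses of \zcref{lem:NoBigComponents} are satisfied by $(G', \mathsf{w}', C)$. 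By the inductive hypothesis, $G'$ admits a $\mathsf{w}'$-balanced separator $S \subseteq V(C)$ of size at most $(\ell-1)^2$; since $v \in J' \setminus V(C)$, we have $v \notin S$, so \zcref{lem:identifySiblings} upgrades $S$ to a $\mathsf{w}$-balanced separator of $G$, as desired.

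The main obstacle is the first case: a naive appeal to \zcref{lem:identifySiblings} could push the weight of $v$ above $\nicefrac{1}{2}$, violating the hypothesis one wishes to invoke inductively. The resolution is that whenever this happens, $\{u,v\}$ already carries more than half of the total weight, so $N(v)$ itself is a separator --- and it is small precisely because of the $W_\ell$-induced-minor-free assumption via \zcref{obs:WheelFromLargeNeighbourhood}.
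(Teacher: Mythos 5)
Your proof is correct and follows essentially the same strategy as the paper: apply \zcref{lem:identifySiblings} to merge a dominated vertex $u$ into $v$, and when the accumulated weight at $v$ would exceed $\nicefrac{1}{2}$, observe that $N(v) \subseteq V(C)$ is itself a balanced separator of size at most $\ell-1$ by \zcref{obs:WheelFromLargeNeighbourhood}. The only presentational difference is that you organize the reduction as a formal induction on $|V(G)|$ (checking the weight threshold at each single merge step), whereas the paper iterates all merges at once to reach a graph $G^\ast$ with no nested neighbourhoods and only then checks whether any merged vertex became too heavy; since weight accrues monotonically, these amount to the same case analysis.
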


    \begin{subproof}
        We iteratively apply the construction of \zcref{lem:identifySiblings} until we have no such pair of vertices; let us call the resulting graph~$G^\ast$ and the resulting weighting~$\mathsf{w}^\ast$. 
        Since by \zcref{lem:identifySiblings}, any $\mathsf{w}^\ast$-balanced separator~${S \subseteq V(C)}$ of~$G^\ast$ is a $\mathsf{w}$-balanced separator~${S \subseteq V(C)}$ of~$G$, it suffices to show that ${\mathsf{w}^\ast(v) \leq \nicefrac{1}{2}}$ for all~${v \in V(G^\ast - V(C))}$. 
        Suppose that there is a vertex~${v_1 \in V(G^\ast - V(C))}$ with~$\mathsf{w}^\ast(v_1) > \nicefrac{1}{2}$. 
        Let~$v_2, \dots, v_m$ be the vertices in~$G$ such that~$N(v_i) \subseteq N(v_1)$ for each~${i \in [m]\setminus\{1\}}$ and~${\mathsf{w}^\ast(v_1) = \sum_{i = 1}^{m} \mathsf{w}(v_i)}$ obtained from the construction of~$G^\ast$. 
        Then consider~${S^\ast \coloneqq N(v_1)}$. 
        By \zcref{obs:WheelFromLargeNeighbourhood}, ${\lvert S^\ast \rvert \leq \ell-1}$. 
        But now, by construction, for each~${i \in [m]}$, we have that~$G[\{v_i\}]$ is a component of $G-S^\ast$, so each of them has $\mathsf{w}$-weight at most~$\nicefrac{1}{2}$. 
        Since~$\mathsf{w}$ is normal and~${\sum_{i = 1}^{m} \mathsf{w}(v_i) > \nicefrac{1}{2}}$, each other component has $\mathsf{w}$-weight at most~$\nicefrac{1}{2}$. 
        But then, $S^\ast$ is a $\mathsf{w}$-balanced separator of~$G$ such that ${S^\ast\subseteq V(C)}$. 
        Thus, $\mathsf{w}^\ast(v) \leq \nicefrac{1}{2}$ for every~${v \in V(G^\ast)}$, and therefore, we may assume without loss of generality, that~${G = G^\ast}$ and~${\mathsf{w} = \mathsf{w}^\ast}$. 
    \end{subproof}
    
    We conclude that~${G' \coloneqq G[C \cup I]}$ is a cobweb presented by~$(C,I)$, using~\zcref{clm:nosiblings} and the assumption that~$I$ is non-empty. 

    As a direct consequence of \zcref{obs:WheelFromLargeNeighbourhood}, each~${v \in I}$ has degree less than~$\ell$. 
    Moreover, consider a wedge~$W$ of~${(G,C,I)}$. 
    If~$W$ is trivial or co-trivial, then, by definition the barrier~$\barrier{W}$ contains~$2$ vertices. 
    Otherwise, the barrier~$\barrier{W}$ contains fewer than~$\ell$ vertices, since contracting the component of $G-V(W)$ containing the vertices of $C-V(W)$ would result in a vertex with $\barrier{W}$ as the set of neighbours in~$W$, implying that~$G$ contains~$W_{\abs{\barrier{W}}}$ as an induced minor. 
    Now, for each~${v \in I}$ consider all wedges of~${(G,C,I)}$ anchored in~$v$. 
    Since $v$ has degree less than~$\ell$, there are at most~$\ell-1$ wedges anchored in~$v$. 
    If for some vertex~${v \in I}$ the union 
    \[
        {S(v) \coloneqq \bigcup \{ \barrier{W} \colon W \textnormal{ is a wedge anchored in } v \}}
    \]
    is a $\mathsf{w}$-balanced separator of~$G$, there is nothing else to show since~${S(v) \subseteq V(C)}$ and ${\abs{S(v)} \leq (\ell-1)^2}$. 
    So assume that for each~${v \in I}$ there is a heavy component~$B(v)$ of~${G - S(v)}$. 
    Note that by \zcref{obs:CobwebAttaches}, for any vertex~${v \in I}$ and any wedge~$W$ anchored in~$v$, any vertex~${u \in I}$ that does not attach to $W$ is separated from~$W$ by the barrier of~$W$. 
    So there is a wedge~$\wedgeselection{v}$ anchored in~$v$ such that~${V(B(v))}$ is contained in the union of~${V(\wedgeselection{v}-v)}$ 
    that have all their neighbours in~${\wedgeselection{v}-v}$. 
    Moreover, we observe that for each~${v \in {I}}$, 
    \begin{equation*}\label{eq:component}
        \tag{$\ast$}
        B(v) \textnormal{ is also a component of }{G - \barrier{v}}. 
    \end{equation*}
    Now let~$\wedgeselectionnull$ with ${v \mapsto \wedgeselection{v}}$ for all~${v \in I}$ be the resulting wedge-selection for~${(G,C,I)}$. 
    
    Let~$\mathcal{T}$ denote the set of good triples with respect to~$\wedgeselectionnull$ as defined in the previous section. 
    If for some good triple~${\{u,v,w\} \in \mathcal{T}}$ the set ${S(\{u,v,w\}) \coloneqq \barrier{u} \cup \barrier{v} \cup \barrier{w}}$ is a balanced separator of~$G$, then there is nothing else to show since~${S(\{u,v,w\}) \subseteq V(C)}$ and ${\abs{S(\{u,v,w\})} \leq 3(\ell-1) \leq (\ell-1)^2}$. 
    So we may assume that for each~${\{u,v,w\} \in \mathcal{T}}$ there exists a heavy component~$B(\{u,v,w\})$ of~${G-S(\{u,v,w\})}$. 
    Since~$B(\{u,v,w\})$ is a component of ${G-S(\{u,v,w\})}$, it is contained in a component of ${G - \barrier{u}}$, namely in $B(u)$, as ${\mathsf{w}(B(\{u,v,w\})) > \nicefrac{1}{2}}$ and $B(u)$ is the unique component of ${G - \barrier{u}}$ with weight more than $\nicefrac{1}{2}$. 
    Similarly, ${B(\{u,v,w\}) \subseteq B(v)}$ and ${B(\{u,v,w\}) \subseteq B(w)}$, hence, ${B(\{u,v,w\}) \subseteq B(u) \cap B(v) \cap B(w)}$. 
    By our choice of~$\wedgeselectionnull$, ${B(x) \cap C \subseteq \wedgeselection{x}}$ for all~${x \in I}$, so~${B(u) \cap B(v) \cap B(w) \cap C \subseteq \wedgeselection{u} \cap \wedgeselection{v} \cap \wedgeselection{w} \cap C}$. 
    Note that~${B(\{u,v,w\}) \cap C}$ is non-empty since~$S(\{u,v,w\})$ is not a $\mathsf{w}$-balanced separator. 
    Since~${B(\{u,v,w\})}$ is a component of~$G-S(\{u,v,w\})$, by \zcref{cor:goodtriple}, it does not intersect multiple components of ${\wedgeselection{u} \cap \wedgeselection{v} \cap \wedgeselection{w} \cap C}$, so ${B(\{u,v,w\}) \cap C}$ is a subgraph of a unique component of~${\wedgeselection{u} \cap \wedgeselection{v} \cap \wedgeselection{w} \cap C}$; we denote that component by~$\segmentselection{\{u,v,w\}}$. 
    Finally, let~$\segmentselectionnull$ with~${\{u,v,w\} \mapsto \segmentselection{\{u,v,w\}}}$ for all ${\{u,v,w\} \in \mathcal{T}}$ be the resulting segment-selection for~${(C,I,\wedgeselectionnull)}$. 

    \begin{claim}
        \label{clm:mainclaim}
        There exists a good pair~${\{u,v\} \in \mathcal{T}}$ such that~${V(B(\{u,v\})) \cap I}$ is empty. 
    \end{claim}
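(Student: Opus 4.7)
My plan is to apply \zcref{lem:separatinggoodpair} and translate the separation it provides into the desired emptiness. This requires first verifying that the image $\mathcal{P}$ of $\segmentselectionnull$ is intersecting.

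To show $\mathcal{P}$ is intersecting, I would fix $T_1, T_2 \in \mathcal{T}$ and use that heaviness of $B(T_1)$ and $B(T_2)$ yields $\mathsf{w}(B(T_1)) + \mathsf{w}(B(T_2)) > 1 = \mathsf{w}(G)$, so some vertex $x$ lies in both components. I would then produce a common vertex of $\segmentselection{T_1} \cap \segmentselection{T_2}$ by cases on where $x$ lies. If $x \in V(C)$, then the inclusion $B(T_i) \cap V(C) \subseteq \segmentselection{T_i}$ (which was established in the paragraph preceding the claim by combining the uniqueness of the heavy component with \zcref{cor:goodtriple}) places $x$ in both segments. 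Otherwise $x \in J$, and independence of $J$ gives $N(x) \subseteq V(C)$; since $\mathsf{w}(x) \leq \nicefrac{1}{2}$ rules out $B(T_i) = \{x\}$, the vertex $x$ must have a neighbour in each $B(T_i) \cap V(C) \subseteq \segmentselection{T_i}$. When $x \in J \setminus I$ these neighbours coincide at the unique neighbour of $x$, giving the intersection. When $x \in I$, if $\segmentselection{T_1} \cap \segmentselection{T_2}$ were empty, every neighbour of $x$ would have to lie in $S(T_1) \cup S(T_2)$ (any neighbour outside would sit in both segments), and I would leverage the wedge-and-barrier structure at $x$ together with \zcref{cor:goodtriple} to rule out such a configuration.

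Having established that $\mathcal{P}$ is intersecting, \zcref{lem:separatinggoodpair} yields a good pair $\{u, v\} \in \mathcal{T}$ such that $S(\{u, v\}) = \barrier{u} \cup \barrier{v}$ separates $I$ from $\segmentselection{\{u, v\}}$ in $G$. Now $B(\{u, v\}) \cap V(C)$ cannot be empty: otherwise $B(\{u, v\}) \subseteq J$ would be a single vertex (since $J$ is independent and $B(\{u, v\})$ is connected), of weight at most $\nicefrac{1}{2}$, contradicting heaviness. Moreover, $B(\{u, v\}) \cap V(C) \subseteq \segmentselection{\{u, v\}}$, by the same argument as for triples but applied with \zcref{lem:goodpair1} in place of \zcref{cor:goodtriple}. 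Hence any hypothetical $z \in V(B(\{u, v\})) \cap I$ would be connected, within $B(\{u, v\}) \subseteq G - S(\{u, v\})$, to a vertex of $\segmentselection{\{u, v\}}$, contradicting the separation just obtained. Thus $V(B(\{u, v\})) \cap I = \emptyset$, as required.

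The main obstacle is the $x \in I$ subcase in the intersecting argument: the weight inequality only yields a shared vertex in $J$, and promoting this to a shared vertex on the cycle $C$ requires ruling out the pathological configuration in which all of $x$'s cycle-neighbours are absorbed into $S(T_1) \cup S(T_2)$, which is where the fine barrier structure of the chosen wedges must be invoked.
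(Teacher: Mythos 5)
Your overall strategy — establish that the image of $\segmentselectionnull$ is intersecting, apply \zcref{lem:separatinggoodpair}, and translate the separation into emptiness of $V(B(\{u,v\})) \cap I$ — coincides with the paper's, and the final translation step is sound. However, as you yourself flag, the $x \in I$ subcase of the intersecting argument is a genuine gap. You derive from the weight inequality a vertex $x \in I$ common to $B(T_1)$ and $B(T_2)$, but you only invoke the inclusion $B(T_i) \cap V(C) \subseteq V(\segmentselection{T_i})$, which by itself cannot rule out the configuration in which every cycle-neighbour of $x$ is absorbed into $S(T_1) \cup S(T_2)$. The appeal to ``the wedge-and-barrier structure at $x$ together with \zcref{cor:goodtriple}'' is left as a sketch, and \zcref{cor:goodtriple} alone does not forbid a neighbour of $x$ from sitting in a barrier vertex of a different component.

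What you are missing is the stronger confinement of the heavy components that the paper sets up when defining $\wedgeselectionnull$: the wedge $\wedgeselection{w}$ is chosen precisely so that $V(B(w))$ lies inside $V(\wedgeselection{w}-w)$ together with the $J$-vertices whose \emph{entire} neighbourhood lies inside $V(\wedgeselection{w}-w)$. Intersecting this over the elements of a triple $T \in \mathcal{T}$, pinning $B(T)\cap V(C)$ to a single component via \zcref{cor:goodtriple}, and then using \zcref{lem:goodtripletopair} together with the good-pair property (no $I$-vertex attaching to both wedges of a good pair has neighbours in both components of their intersection) to keep a $J$-vertex's neighbourhood from straddling two components, one obtains $V(B(T)) \subseteq V(\segmentselection{T}) \cup \{x \in J : N(x) \subseteq V(\segmentselection{T})\}$ for every $T \in \mathcal{T}$. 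With this in hand, disjointness of $\segmentselection{T_1}$ and $\segmentselection{T_2}$ directly forces disjointness of $B(T_1)$ and $B(T_2)$: the cycle parts are disjoint, and any shared $J$-vertex $x$ would satisfy $N(x) \subseteq V(\segmentselection{T_1}) \cap V(\segmentselection{T_2}) = \emptyset$, impossible in a connected graph on more than one vertex. The weight inequality then gives the contradiction without ever needing to locate an explicit common cycle vertex, so your problematic $x \in I$ subcase never arises.
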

    
    \begin{subproof}
        To be able to use \zcref{lem:separatinggoodpair}, we need to prove that 
        the image of~$\segmentselectionnull$ is intersecting. 
        By our choice for the wedge-selection~$\wedgeselectionnull$, for each~${w \in I}$ we have that~$V(B(w))$ is contained in the union of~${V(\wedgeselection{w}-w) = V(\segmentselection{\{w\}})}$ with the set of vertices in~$J$ that have all their neighbours in~${\segmentselection{\{w\}}}$. 
        Similarly, by our choice for the segment-selection~$\segmentselectionnull$, we can observe that for every good triple~${T \in \mathcal{T}}$, the vertex set of~$B(T)$ is contained in the union of~$V(\segmentselection{T})$ with the set of vertices in~$J$ that have all their neighbours in~$\segmentselection{T}$. 
        So, assume for a contradiction that there exist two distinct~${T, T' \in \mathcal{T}}$ such that~$\segmentselection{T}$ and~$\segmentselection{T'}$ are disjoint. 
        Then, the respective~$B(T)$ and~$B(T')$ are disjoint as well, and hence~${\mathsf{w}(G) \geq \mathsf{w}(B(T) \cup B(T')) = \mathsf{w}(B(T)) + \mathsf{w}(B(T')) > 1}$, a contradiction to the normality of~$\mathsf{w}$.  
        Therefore, 
        the image of~$\segmentselectionnull$ is intersecting and, by \zcref{lem:separatinggoodpair}, there exists a good pair~${\{u,v\}}$ such that~${S(\{u,v\})}$ separates~$I$ from ${\segmentselection{\{u,v\}}}$ in~$G'$, and hence in~$G$. 
        Therefore, ${V(B(\{u,v\})) \subseteq V(\segmentselection{\{u,v\}}) \cup (J \setminus I)}$. 
    \end{subproof}

    By \zcref{clm:mainclaim}, there exists a good pair~${\{u,v\}}$ such that ${V(B(\{u,v\})) \cap I}$ is empty.
    Applying \zcref{clm:buildseparator} to the set~$S(\{u,v\})$ shows that there exists a vertex~${w \in V(B(\{u,v\}) \cap C)}$ such that the set~$S\coloneqq S(\{u,v\}) \cup \{w\}$ is a $\mathsf{w}$-balanced separator of~$G$. 
    Since~${S \subseteq V(C)}$ and~${\abs{S} \leq {2(\ell-1) + 1 \leq (\ell-1)^2}}$, this concludes the proof of the lemma. 
\end{proof}

\section*{Acknowledgements}

The authors are grateful to Ekkehard K\"{o}hler and Pawe\l{} Rzążewski for helpful discussions. 
This work was conducted in part when the first author visited University of Primorska. 

\printbibliography

\appendix

\section{Dominated balanced separators in fan-induced-minor-free graphs}

Given an integer~${\ell \geq 2}$, let~$F_\ell$ denote the \emph{$\ell$-fan}, that is, the graph obtained from an $\ell$-vertex path by adding a universal vertex. 
Clearly, $F_{\ell}$ is an induced subgraph of~$W_{\ell+1}$, so \zcref{thm:mainWeighted} immediately implies that the class of~$F_\ell$-induced-minor-free graphs has balanced separators dominated by few vertices. 
However, this result is a much more straightforward consequence of \zcref{lem:modifyBalancedSep,lem:GyarfasPath}, which we prove here. 

\begin{theorem}
    \label{thm:fanfree}
    For every integer~${\ell \geq 2}$ and every $F_\ell$-induced minor free graph~$G$ with weighting~$\mathsf{w}$ of~$G$ there is a $\mathsf{w}$-balanced separator of~$G$ that is dominated by at most~$\ell$ vertices. 
\end{theorem}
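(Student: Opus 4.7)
The plan is to mimic the strategy of \zcref{lem:Gyarfas(Cycle+Vertex)} but in a simpler form, since for fans we do not need to close the path into a cycle. Without loss of generality, assume that $\mathsf{w}$ is normal, and apply \zcref{lem:GyarfasPath} to obtain an induced path $P = p_1 p_2 \ldots p_k$ of minimum length such that $N[P]$ is a $\mathsf{w}$-balanced separator of $G$. If $k \leq \ell$, then $N[P]$ itself is a $\mathsf{w}$-balanced separator dominated by the at most $\ell$ vertices of $V(P)$, and we are done.

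So suppose $k \geq \ell + 1$. By the minimality of $P$, the set $N[P - p_k]$ is not a $\mathsf{w}$-balanced separator, so there is a unique heavy component $B$ of $G - N[P - p_k]$. Applying \zcref{lem:modifyBalancedSep} with $X = Z = N[P - p_k]$ and $Y = N[p_k]$, and noting that $B$ is disjoint from $N[P - p_k]$ so that $N[B] \cap N[P - p_k] = N(B)$, I conclude that $S := N(B) \cup N[p_k]$ is a $\mathsf{w}$-balanced separator of $G$. Moreover, since $V(P) \subseteq N[P - p_k]$ (using that $p_k \in N(p_{k-1})$), the component $B$ is a connected subgraph of $G - V(P)$.

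Next, I need the fan-analogue of \zcref{obs:WheelFromLargeNeighbourhood}: if $G$ is $F_\ell$-induced-minor-free and $Q$ is an induced path in $G$, then every connected subgraph of $G - V(Q)$ has at most $\ell - 1$ neighbours on $Q$, since otherwise contracting such a subgraph to a single vertex and contracting the segments of $Q$ between $\ell$ chosen neighbours yields $F_\ell$ as an induced minor (no chords arise, as $Q$ is induced). Applied to $C := V(B) \cup (N(B) \setminus V(P))$, which is connected (every vertex of $N(B) \setminus V(P)$ has a neighbour in $B$) and disjoint from $V(P)$, this shows that $T := N(C) \cap V(P)$ satisfies $|T| \leq \ell - 1$. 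I then claim that $T \cup \{p_k\}$ dominates $S$: each $v \in N(B) \cap V(P)$ is adjacent to $B \subseteq C$ and hence lies in $T$; each $v \in N(B) \setminus V(P)$ lies in $C$ and in $N(P - p_k) \setminus V(P)$, so it has a neighbour on $V(P - p_k)$, which in turn lies in $T$. Combined with $N[p_k] \subseteq N[\{p_k\}]$, this gives $S \subseteq N[T \cup \{p_k\}]$ with $|T \cup \{p_k\}| \leq \ell$, completing the proof. The only non-routine ingredient is the fan-analogue of \zcref{obs:WheelFromLargeNeighbourhood}, which is the main obstacle but admits an argument essentially identical to the one given for wheels.
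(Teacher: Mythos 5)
Your proof is correct and uses the same three ingredients as the paper's argument: the Gy\'{a}rf\'{a}s path lemma (\zcref{lem:GyarfasPath}), the separator-thinning lemma (\zcref{lem:modifyBalancedSep}), and the fan analogue of \zcref{obs:WheelFromLargeNeighbourhood}. The one substantive difference is the choice of $Z$ when invoking \zcref{lem:modifyBalancedSep}. You take $X = Z = N[P - p_k]$, mirroring the application in \zcref{lem:Gyarfas(Cycle+Vertex)}; this yields $S = N(B) \cup N[p_k]$ where $B$ is a heavy component of $G - N[P-p_k]$, so $N(B)$ may contain vertices off $P$, and you must introduce the auxiliary connected set $C = V(B) \cup (N(B) \setminus V(P))$ and its trace $T = N(C) \cap V(P)$ before the fan observation can be applied. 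The paper instead takes $X = N[P - p_k]$ but $Z = V(P - p_k)$, so the heavy component $B$ is a component of $G - V(P - p_k)$, whence $N(B) \subseteq V(P - p_k)$; the fan argument then bounds $\lvert N(B)\rvert < \ell$ directly and $N(B) \cup \{p_k\}$ is immediately the desired dominating set of the separator $N[N(B)\cup\{p_k\}]$. Both routes reach the bound $\ell$; the paper's choice of $Z$ avoids the detour through $C$ and $T$, while yours stays closer to the template of the cycle case. Your handling of the base case ($k \leq \ell$ versus the paper's $k \leq 2$) is also fine and actually lets you omit the paper's final contraction argument when $P$ is short, at no extra cost.
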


\begin{proof}
    Let~${P = p_1 p_2 \dots p_k}$ be a minimal path as in \zcref{lem:GyarfasPath}.
    Since~${N[P]}$ is a $\mathsf{w}$-balanced separator of~$G$ by \zcref{lem:GyarfasPath}, we may assume~${k \geq 3}$. 
    Let~${Q \coloneqq p_1 \dots p_{k-1}}$. 
    By the minimality of~$P$ there is a unique heavy component~$B$ of~${G - V(Q)}$. 
    Applying \zcref{lem:modifyBalancedSep} with~${X \coloneqq N[Q]}$,~${Y \coloneqq N[p_k]}$, and~${Z \coloneqq V(Q)}$, we conclude that the set ${S \coloneqq N[N(B) \cup \{p_k\}]}$ is a $\mathsf{w}$-balanced separator of~$G$. 
    Note that the graph obtained from~$G[Q \cup B]$ by contracting~$B$ into a single vertex and successively deleting \hbox{degree-$1$} vertices and suppressing degree-$2$ vertices is a $\abs{N(B)}$-fan as long as~${\abs{N(B)} \geq 2}$, so~${\abs{N(B)} < \ell}$ since~$G$ is $F_\ell$-induced minor free. 
    Hence, $S$ is dominated by at most~$\ell$ vertices, as required. 
\end{proof}

\end{document}